\newcommand {\bR}{\mathbb R}
\newcommand {\bN}{\mathbb N}
\newcommand {\bZ}{\mathbb Z}
\newcommand {\bC}{\mathbb C}
\newcommand {\bQ}{\mathbb Q}
\newcommand {\Id}{\operatorname{Id}}
\newcommand {\bk}{\mathbf{k}}
\newcommand {\be}{\mathbf{1}}
\newcommand{\cA}{\mathcal{A}}
\newcommand{\cB}{\mathcal{B}}
\newcommand{\cL}{\mathcal{L}}
\newcommand{\cH}{\mathcal{H}}
\newcommand{\cP}{\mathcal{P}}
\newcommand{\Hom}{\operatorname{Hom}}
\newcommand{\dpt}{\operatorname{dpt}}
\newcommand{\wt}{\operatorname{wt}}
\newcommand {\RE}{\operatorname{Re}}
\newcommand {\Li}{\operatorname{Li}}
\newcommand{\cS}{\mathcal S}
\newcommand{\fz}{\mathfrak{z}}
\newcommand{\ofz}{\overline{\mathfrak{z}}}
\newcommand {\z}{\zeta}
\newcommand{\sh}{\,\shuffle\,}
\newcommand{\shl}{\,\shuffle_\lambda\,}
\newcommand{\sho }{\,\shuffle_0\,}
\newcommand{\shm }{\,\shuffle_{-1}\,}
\newcommand{\dl}{\Delta_\lambda}
\newcommand{\odl}{\overline{\Delta}_\lambda}
\newcommand{\tdl}{\tilde{\Delta}_\lambda}
\newcommand{\cT}{\mathcal{T}}
\newcommand{\od}{\overline{\Delta}}
\newcommand{\MP}{\mathcal{MP}}
\long\def\ignore#1{}
\newtheorem{theorem}{Theorem}[section]
\newtheorem {lemma}[theorem]{Lemma}
\newtheorem {proposition}[theorem]{Proposition}
\newtheorem {corollary}[theorem]{Corollary}
\newtheorem {example}[theorem]{Example}
\newtheorem {remark}[theorem]{Remark}
\def\y{
\begin{tikzpicture}
\draw[fill=white] circle(2pt);
\end{tikzpicture}}
\def\d{
\begin{tikzpicture}
\filldraw[black] circle(2pt);
\end{tikzpicture}}
\def\ddydy{
\begin{tikzpicture}
\draw[thick=black] (0:1.5) arc (0:180:1.5cm);
\draw[fill=white] (-1.65,0) -- (1.65,0);
\filldraw[black] (-1.5,0) circle(2pt);
\filldraw[black] (45:1.5cm) circle(2pt);
%\draw (45:1cm) circle(4pt);
\draw[fill=white] (90:1.5cm) circle(2pt);
\filldraw[black] (135:1.5cm) circle(2pt);
%\draw[fill=white] (-1.5,0) -- (135:1.5cm);
\draw[fill=white] (1.5,0) circle(2pt);
\end{tikzpicture}}
\def\Py{
\begin{tikzpicture}
\draw[thick=black] (0:1.5) arc (0:180:1.5cm);
\draw[fill=white] (-1.65,0) -- (1.65,0);
\filldraw[black] (-1.5,0) circle(2pt);
\filldraw[black] (45:1.5cm) circle(2pt);
\draw[fill=white] (90:1.5cm) circle(2pt);
\draw (90:1.5cm) circle(4pt);
\filldraw[black] (135:1.5cm) circle(2pt);
\draw[fill=white] (1.5,0) circle(2pt);
\end{tikzpicture}}
\def\PdyA{
\begin{tikzpicture}
\draw[thick=black] (0:1.5) arc (0:180:1.5cm);
\draw[fill=white] (-1.65,0) -- (1.65,0);
\filldraw[black] (-1.5,0) circle(2pt);
\draw[fill=white] (-1.5,0) -- (90:1.5cm);
\filldraw[black] (45:1.5cm) circle(2pt);
\draw[fill=white] (90:1.5cm) circle(2pt);
\filldraw[black] (135:1.5cm) circle(2pt);
\draw[fill=white] (1.5,0) circle(2pt);
\end{tikzpicture}}
\def\PdyAx{
\begin{tikzpicture}
\draw[thick=black] (0:1.5) arc (0:180:1.5cm);
\draw[fill=white] (-1.65,0) -- (1.65,0);
\filldraw[black] (-1.6,0) circle(2pt);
\filldraw[black] (-1.40,0) circle(2pt);
\draw[fill=white] (-1.5,0) -- (90:1.5cm);
\filldraw[black] (45:1.5cm) circle(2pt);
\draw[fill=white] (90:1.5cm) circle(2pt);
\filldraw[black] (135:1.5cm) circle(2pt);
\draw[fill=white] (1.5,0) circle(2pt);
\end{tikzpicture}}
\def\PdyB{
\begin{tikzpicture}
\draw[thick=black] (0:1.5) arc (0:180:1.5cm);
\draw[fill=white] (-1.65,0) -- (1.65,0);
\filldraw[black] (-1.5,0) circle(2pt);
\filldraw[black] (45:1.5cm) circle(2pt);
\draw[fill=white] (90:1.5cm) circle(2pt);
\filldraw[black] (135:1.5cm) circle(2pt);
\draw[fill=white] (135:1.5cm) -- (90:1.5cm);
\draw[fill=white] (1.5,0) circle(2pt);
\end{tikzpicture}}
\def\PdyBx{
\begin{tikzpicture}
\draw[thick=black] (0:1.5) arc (0:180:1.5cm);
\draw[fill=white] (-1.65,0) -- (1.65,0);
\filldraw[black] (-1.5,0) circle(2pt);
\filldraw[black] (45:1.5cm) circle(2pt);
\draw[fill=white] (90:1.5cm) circle(2pt);
\filldraw[black] (135:1.6cm) circle(2pt);
\draw[fill=white] (135:1.5cm) -- (90:1.5cm);
\filldraw[black] (135:1.40cm) circle(2pt); 
\draw[fill=white] (1.5,0) circle(2pt);
\end{tikzpicture}}
\def\PdyC{
\begin{tikzpicture}
\draw[thick=black] (0:1.5) arc (0:180:1.5cm);
\draw[fill=white] (-1.65,0) -- (1.65,0);
\filldraw[black] (-1.5,0) circle(2pt);
\filldraw[black] (45:1.5cm) circle(2pt);
\draw[fill=white] (45:1.5cm) -- (1.5,0);
\draw[fill=white] (90:1.5cm) circle(2pt);
\filldraw[black] (135:1.5cm) circle(2pt);
\draw[fill=white] (1.5,0) circle(2pt);
\end{tikzpicture}}
\def\PddyA{
\begin{tikzpicture}
\draw[thick=black] (0:1.5) arc (0:180:1.5cm);
\draw[fill=white] (-1.65,0) -- (1.65,0);
\filldraw[black] (-1.5,0) circle(2pt);
\filldraw[black] (45:1.5cm) circle(2pt);
\draw[fill=white] (90:1.5cm) circle(2pt);
\filldraw[black] (135:1.5cm) circle(2pt);
\draw[fill=white] (135:1.5cm) -- (90:1.5cm);
\draw[fill=white] (1.5,0) circle(2pt);
\draw[fill=white] (-1.5,0) -- (135:1.5cm);
\draw[fill=white] (-1.5,0) -- (90:1.5cm);
\end{tikzpicture}}
\def\PddyAxa{
\begin{tikzpicture}
\draw[thick=black] (0:1.5) arc (0:180:1.5cm);
\draw[fill=white] (-1.65,0) -- (1.65,0);
\filldraw[black] (-1.6,0) circle(2pt);
\filldraw[black] (-1.4,0) circle(2pt);
\filldraw[black] (45:1.5cm) circle(2pt);
\draw[fill=white] (90:1.5cm) circle(2pt);
\filldraw[black] (135:1.5cm) circle(2pt);
\draw[fill=white] (135:1.5cm) -- (90:1.5cm);
\draw[fill=white] (1.5,0) circle(2pt);
\draw[fill=white] (-1.5,0) -- (135:1.5cm);
\draw[fill=white] (-1.5,0) -- (90:1.5cm);
\end{tikzpicture}}
\def\PddyAxb{
\begin{tikzpicture}
\draw[thick=black] (0:1.5) arc (0:180:1.5cm);
\draw[fill=white] (-1.65,0) -- (1.65,0);
\filldraw[black] (-1.5,0) circle(2pt);
\filldraw[black] (45:1.5cm) circle(2pt);
\draw[fill=white] (90:1.5cm) circle(2pt);
\filldraw[black] (135:1.6cm) circle(2pt);
\draw[fill=white] (135:1.5cm) -- (90:1.5cm);
\filldraw[black] (135:1.4cm) circle(2pt);
\draw[fill=white] (1.5,0) circle(2pt);
\draw[fill=white] (-1.5,0) -- (135:1.5cm);
\draw[fill=white] (-1.5,0) -- (90:1.5cm);
\end{tikzpicture}}
\def\PddyAxx{
\begin{tikzpicture}
\draw[thick=black] (0:1.5) arc (0:180:1.5cm);
\draw[fill=white] (-1.65,0) -- (1.65,0);
\filldraw[black] (-1.6,0) circle(2pt);
\filldraw[black] (-1.4,0) circle(2pt);
\filldraw[black] (45:1.5cm) circle(2pt);
\draw[fill=white] (90:1.5cm) circle(2pt);
\filldraw[black] (135:1.6cm) circle(2pt);
\filldraw[black] (135:1.4cm) circle(2pt);
\draw[fill=white] (135:1.5cm) -- (90:1.5cm);
\draw[fill=white] (1.5,0) circle(2pt);
\draw[fill=white] (-1.5,0) -- (135:1.5cm);
\draw[fill=white] (-1.5,0) -- (90:1.5cm);
\end{tikzpicture}}
\def\PddyB{
\begin{tikzpicture}
\draw[thick=black] (0:1.5) arc (0:180:1.5cm);
\draw[fill=white] (-1.65,0) -- (1.65,0);
\filldraw[black] (-1.5,0) circle(2pt);
\draw[fill=white] (1.5,0) -- (135:1.5cm);
\filldraw[black] (45:1.5cm) circle(2pt);
\draw[fill=white] (45:1.5cm) -- (1.5,0);
\draw[fill=white] (90:1.5cm) circle(2pt);
\filldraw[black] (135:1.5cm) circle(2pt);
\draw[fill=white] (135:1.5cm) -- (45:1.5cm);
\draw[fill=white] (1.5,0) circle(2pt);
\end{tikzpicture}}
\def\PddyBx{
\begin{tikzpicture}
\draw[thick=black] (0:1.5) arc (0:180:1.5cm);
\draw[fill=white] (-1.65,0) -- (1.65,0);
\filldraw[black] (-1.5,0) circle(2pt);
\draw[fill=white] (1.5,0) -- (135:1.5cm);
\filldraw[black] (45:1.5cm) circle(2pt);
\draw[fill=white] (45:1.5cm) -- (1.5,0);
\draw[fill=white] (90:1.5cm) circle(2pt);
\filldraw[black] (135:1.6cm) circle(2pt);
\filldraw[black] (135:1.4cm) circle(2pt);
\draw[fill=white] (135:1.5cm) -- (45:1.5cm);
\draw[fill=white] (1.5,0) circle(2pt);
\end{tikzpicture}}
\def\PddyC{
\begin{tikzpicture}
\draw[thick=black] (0:1.5) arc (0:180:1.5cm);
\draw[fill=white] (-1.65,0) -- (1.65,0);
\filldraw[black] (-1.5,0) circle(2pt);
\draw[fill=white] (-1.5,0) -- (45:1.5cm);
\filldraw[black] (45:1.5cm) circle(2pt);
\draw[fill=white] (45:1.5cm) -- (1.5,0);
\draw[fill=white] (90:1.5cm) circle(2pt);
\filldraw[black] (135:1.5cm) circle(2pt);
\draw[fill=white] (1.5,0) circle(2pt);
\end{tikzpicture}}
\def\PddyCx{
\begin{tikzpicture}
\draw[thick=black] (0:1.5) arc (0:180:1.5cm);
\draw[fill=white] (-1.65,0) -- (1.65,0);
\filldraw[black] (-1.6,0) circle(2pt);
\filldraw[black] (-1.4,0) circle(2pt);
\draw[fill=white] (-1.5,0) -- (45:1.5cm);
\filldraw[black] (45:1.5cm) circle(2pt);
\draw[fill=white] (45:1.5cm) -- (1.5,0);
\draw[fill=white] (90:1.5cm) circle(2pt);
\filldraw[black] (135:1.5cm) circle(2pt);
\draw[fill=white] (1.5,0) circle(2pt);
\end{tikzpicture}}
\def\Pdddy{
\begin{tikzpicture}
\draw[thick=black] (0:1.5) arc (0:180:1.5cm);
\draw[fill=white] (-1.65,0) -- (1.65,0);
\filldraw[black] (-1.5,0) circle(2pt);
\filldraw[black] (45:1.5cm) circle(2pt);
\draw[fill=white] (45:1.5cm) -- (1.5,0);
\draw[fill=white] (90:1.5cm) circle(2pt);
\filldraw[black] (135:1.5cm) circle(2pt);
\draw[fill=white] (135:1.5cm) -- (45:1.5cm);
\draw[fill=white] (1.5,0) circle(2pt);
\draw[fill=white] (-1.5,0) -- (135:1.5cm);
\end{tikzpicture}}
\def\Pdddyxa{
\begin{tikzpicture}
\draw[thick=black] (0:1.5) arc (0:180:1.5cm);
\draw[fill=white] (-1.65,0) -- (1.65,0);
\filldraw[black] (-1.6,0) circle(2pt);
\filldraw[black] (-1.4,0) circle(2pt);
\filldraw[black] (45:1.5cm) circle(2pt);
\draw[fill=white] (45:1.5cm) -- (1.5,0);
\draw[fill=white] (90:1.5cm) circle(2pt);
\filldraw[black] (135:1.5cm) circle(2pt);
\draw[fill=white] (135:1.5cm) -- (45:1.5cm);
\draw[fill=white] (1.5,0) circle(2pt);
\draw[fill=white] (-1.5,0) -- (135:1.5cm);
\end{tikzpicture}}
\def\Pdddyxb{
\begin{tikzpicture}
\draw[thick=black] (0:1.5) arc (0:180:1.5cm);
\draw[fill=white] (-1.65,0) -- (1.65,0);
\filldraw[black] (-1.5,0) circle(2pt);
\filldraw[black] (45:1.5cm) circle(2pt);
\draw[fill=white] (45:1.5cm) -- (1.5,0);
\draw[fill=white] (90:1.5cm) circle(2pt);
\filldraw[black] (135:1.6cm) circle(2pt);
\filldraw[black] (135:1.4cm) circle(2pt);
\draw[fill=white] (135:1.5cm) -- (45:1.5cm);
\draw[fill=white] (1.5,0) circle(2pt);
\draw[fill=white] (-1.5,0) -- (135:1.5cm);
\end{tikzpicture}}
\def\Pdddyxx{
\begin{tikzpicture}
\draw[thick=black] (0:1.5) arc (0:180:1.5cm);
\draw[fill=white] (-1.65,0) -- (1.65,0);
\filldraw[black] (-1.6,0) circle(2pt);
\filldraw[black] (-1.4,0) circle(2pt);
\filldraw[black] (45:1.5cm) circle(2pt);
\draw[fill=white] (45:1.5cm) -- (1.5,0);
\draw[fill=white] (90:1.5cm) circle(2pt);
\filldraw[black] (135:1.6cm) circle(2pt);
\filldraw[black] (135:1.4cm) circle(2pt);
\draw[fill=white] (135:1.5cm) -- (45:1.5cm);
\draw[fill=white] (1.5,0) circle(2pt);
\draw[fill=white] (-1.5,0) -- (135:1.5cm);
\end{tikzpicture}}
\begin{document}

\title[Shuffle renormalization of ($q$)MZVs]{The Hopf algebra of ($q$)multiple polylogarithms\\ with non-positive arguments}

\author[K.~Ebrahimi-Fard]{Kurusch Ebrahimi-Fard}
\address{ICMAT,
		C/ Nicol\'as Cabrera~13-15, 28049 Madrid, Spain.
		On leave from Univ.~de Haute Alsace, Mulhouse, France}
         \email{kurusch@icmat.es, kurusch.ebrahimi-fard@uha.fr}         
         \urladdr{www.icmat.es/kurusch}

\author[D.~Manchon]{Dominique Manchon}
\address{Univ. Blaise Pascal, C.N.R.S.-UMR 6620, 3 place Vasar\'ely, CS 60026, 63178 Aubi\`ere, France}       
         \email{manchon@math.univ-bpclermont.fr}
         \urladdr{http://math.univ-bpclermont.fr/$\sim$manchon/}

\author[J.~Singer]{Johannes Singer}
\address{Department Mathematik, 
	Friedrich--Alexander--Universit\"at Erlangen--N\"urnberg, 
	Cauerstra\ss e 11,  
	91058 Erlangen, Germany}
\email{singer@math.fau.de}
\urladdr{http://math.fau.de/singer}
\keywords{multiple polylogarithms, multiple zeta values, Rota-Baxter algebra, renormalization, Hopf algebra, $q$-analogues}
\subjclass[2010]{11M32,16T05}
\date{\today}

\begin{abstract}
We consider multiple polylogarithms in a single variable at non-positive integers. Defining a connected graded Hopf algebra, we apply Connes' and Kreimer's algebraic Birkhoff decomposition to renormalize multiple polylogarithms at non-positive integer arguments, which satisfy the shuffle relation. The $q$-analogue of this result is as well presented, and compared to the classical case.
\end{abstract}

\maketitle

\tableofcontents

%%
%%Last changes: Thursday, April 2, 12:55 
%%

\section{Introduction}
\label{sect:intro}

%%%%%%%%%%%%%%%%%%%%%%%%%%%

Let $n, k_1, \ldots, k_n$ be positive integers. \emph{Multiple polylogarithms} (MPLs) in a single variable are defined by 
\begin{align*}
  \Li_{k_1,\ldots, k_n}(z) := \sum_{m_1>\cdots>m_n>0} \frac{z^{m_1}}{m_1^{k_1} \cdots m_n^{k_n}}
\end{align*}
when $z$ is a complex number. The function $\Li_{k_1,\ldots, k_n}(z)$ is of depth $\dpt(\mathbf{k}):=n \ge 1$ and weight $\wt(\mathbf k):=k_1+\cdots+k_n$, for $\mathbf{k}:=(k_1,\ldots,k_n)$. It is analytic in the open unit disk and, in the case $k_1 > 1$, continuous on the closed unit disk. In this case we observe the connection of MPLs and \emph{multiple zeta values} (MZVs)  
\begin{align}
\label{MZVs-sum}
 \zeta(k_1,\ldots,k_n):= \sum_{m_1>\cdots>m_n>0}\frac{1}{m_1^{k_1}\cdots m_n^{k_n}} =\Li_{k_1,\ldots,k_n}(1).
\end{align}

\noindent Equivalently we can define MPLs by induction on the weight $\wt(\mathbf k)$ as follows: 
\begin{align}\label{eq:diff1}
  z\frac{d}{dz}\Li_{k_1,\ldots,k_n}(z)=\Li_{k_1-1,k_2, \ldots, k_n}(z) \hspace{0.3cm} \text{if}\hspace{0.3cm} k_1>1, 
\end{align}
\begin{align}\label{eq:diff2}
  (1-z)\frac{d}{dz}\Li_{1,k_2,\ldots,k_n}(z)=\Li_{k_2,\ldots,k_n}(z) \hspace{0.3cm} \text{if}\hspace{0.3cm} n>1,
\end{align}
\begin{align}\label{eq:initial}
  \Li_{k_1,\ldots,k_n}(0)=0.
\end{align}

Therefore one can observe an integral formula for MPLs using iterated Chen integrals. Indeed, let $\varphi_1, \ldots, \varphi_p$ be complex-valued differential $1$-forms defined on a compact interval. Then we define inductively for real numbers $x$ and $y$
\begin{align*}
 \int_x^y \varphi_1\cdots \varphi_p:=\int_x^y\varphi_1(t)\int_x^t\varphi_2\cdots \varphi_p. 
\end{align*}
Now we set 
\begin{align*}
 \omega_{k_1,\ldots,k_n}:=\omega_0^{k_1-1}\omega_1 \cdots \omega_0^{k_n-1}\omega_1,
\end{align*}
where
\begin{align*}
 \omega_0(t):=\frac{dt}{t} \hspace{0.4cm} \text{and} \hspace{0.4cm} \omega_1(t):=\frac{dt}{1-t}. 
\end{align*}
Using the differential equations \eqref{eq:diff1}, \eqref{eq:diff2} and the initial conditions \eqref{eq:initial} we obtain
\begin{align}
\label{eq:integralrep}
  \Li_{k_1,\ldots,k_n}(z) =  \int_0^z \omega_{k_1,\ldots,k_n} 
\end{align}
using the convention $\Li_{\emptyset}(z)=1$. 
This representation gives rise to the well known shuffle products of MPLs and MZVs (see e.g. \cite{Waldschmidt02,Waldschmidt11}).\\

Recall that the $\mathbb Q$-vector space spanned by MZVs forms an algebra equipped with two products. The quasi-shuffle product is obtained when one multiplies series (\ref{MZVs-sum}) directly, which yields a linear combination of MZVs due to the product rule for sums. The aforementioned shuffle product between MZVs derives from integration by parts for iterated integrals. The resulting so-called double shuffle relations among MZVs arise from the interplay between these two products. An alternative characterization of MPLs can be given by the following formula
\begin{align}
\label{eq:Jcharact}
 \Li_{k_1,\ldots,k_n}(z) = J^{k_1}[yJ^{k_2}[y \cdots J^{k_n}[y]\cdots ]](z),
\end{align}
where $y(z):=\frac{z}{1-z}$ and $J[f](z):=\int_0^z \frac{f(t)}{t}\,dt$ (see Lemma \ref{lem:Jiteration}). Since $J$ is a Rota--Baxter operator of weight zero, iterations of the operator $J$ induce a product that coincides with the usual shuffle product for MPLs (see Lemma \ref{lem:jash}). For $|z|<1$ Equation \eqref{eq:Jcharact} is valid for all $k_1,\ldots,k_n\in \bZ$. The inverse of $J$ is given by $J^{-1}[f](t) = \delta[f](t):=t\frac{\partial f}{\partial t}(t)$ (Proposition \ref{prop:RBOJ}). 
We study the $\bQ$-vector space
\begin{align*}
 \MP:=\langle z\mapsto \Li_{-k_1,\ldots,-k_n}(z) \colon k_1,\ldots,k_n\in \bN_0, n\in \bN \rangle_{\bQ},
\end{align*}
which is indeed an algebra, where the product is induced by Equation \eqref{eq:Jcharact} (see Lemma \ref{lem:Jalgebra}).

The algebra $\mathcal{MP}$ admits also an interpretation for MZVs at non-positive integers. Indeed, let $k_1,\ldots,k_n \in \bN_0$. It is easily seen that $\Li_{-k_1,\ldots,-k_n}(z)$ is convergent for $|z|<1$ and divergent for $z=1$. Nevertheless we can perceive the product induced for $|z|<1$ as an analogue for the shuffle product of MZVs at non-positive integers. In order to make this connection more precise we have to establish a renormalization procedure. This permits us to extract explicit numbers for MZVs with non-positive arguments in a consistent way, such that they satisfy the shuffle product relations induced by the algebra structure of $\MP$.\\

We should keep in mind that a characterization of the shuffle product at non-positive integers -- in contrast to the quasi-shuffle product --  is a crucial point. Since the quasi-shuffle product is induced by the series representation of MZVs, the combinatorics is essentially the same as for positive arguments. On the other hand the shuffle product for positive indices is induced by the integral representation \eqref{eq:integralrep}. The combinatorics behind this product comes from shuffling of integration variables. It could be illustrated by the shuffling of two decks of cards, say a deck of red and blue cards, each consecutively numbered such that the internal numbering of red and blue cards is preserved. In this approach, however, it is not clear how to handle non-positive arguments, which corresponds to a non-positive number of cards.\\

Extracting finite numbers for MZVs at non-positive integers is accomplished by the process of renormalization, which involves two steps: 
\begin{enumerate}[(I)]

 \item introduction of a regularization scheme,

 \item applying a subtraction method.

\end{enumerate}

\noindent In step (I) we consider divergent MZVs $\zeta(-k_1,\ldots,-k_n)$ with $k_1,\ldots,k_n\in \bN_0$, and introduce a so-called regularization parameter $z$, which systematically deforms the divergent MZV in order to obtain a meromorphic function in $z$ with the only singularity in $z=0$. 
Step (II) involves a systematic procedure to eliminate singularities in terms of recursively defined subtractions. A rather natural way to achieve such eliminations is widely known as minimal subtraction scheme. The renormalization process is an integral part of perturbative quantum field theory (QFT). See e.g.~\cite{Collins}. The ``right" choice of the regularization scheme in QFT is essential in the light of constraints coming from physics. In our context those constraints are of mathematical nature: The deformation of divergent MZVs has to be established in such a way that the regularized MZVs coincide with the meromorphic continuation of (M)ZVs. The recursively defined subtractions in step (II) involve combinatorial structures, which are concisely captured by the Connes--Kreimer Hopf algebraic approach to renormalization \cite{Connes00, Connes01, Manchon08}.\\

One of the key points in our approach is based on providing an adequate Hopf algebra together with an algebra morphism from that Hopf algebra into the space $\MP$ (Theorem \ref{theo:HopfQuot}), which permits to define a consistent renormalization process. Regarding regularization schemes, we will use the fact, that MPLs may be considered as regularized classical MZVs. However, we also consider a specific $q$-analogue of MZVs \cite{Ohno12}, where the variable $q$ takes the role of a natural regulator.

\begin{remark} {\rm{Renormalization of MZVs at non-positive integers appeared already in \cite{Guo08} and \cite{Manchon10}. The authors applied regularization schemes together with well-chosen subtraction methods suitable for preserving the quasi-shuffle product for renormalized MZVs -- at non-positive arguments. Using Ecalle's Mould calculus, Bouillot proposes in his work \cite{Bouillot13} a unifying picture of MZVs at non-positive arguments respecting the quasi-shuffle product. The common point of our approach with those presented in the aforementioned references is the use of the Connes--Kreimer Hopf algebraic approach to renormalization, and the corresponding algebraic Birkhoff decomposition, which encodes the subtraction procedure for singularities. However, we should emphasize that in our work it is the shuffle product, in a naturally extended sense, which is satisfied by renormalized MZVs at non-positive arguments.
We remark that Zhao's approach in \cite{Zhao08} is rather different from the point of view we present in this paper, since it extended the approach of \cite{Guo08} to a particular $q$-analog of MZVs preserving the quasi-shuffle product, while not attributing any regularization properties to the $q$-parameter itself.}}
\end{remark}

In \cite{Castillo13b} the authors indicated that the $q$-parameter appearing in a specific $q$-analogue of MZVs \cite{Ohno12} (see Equation \eqref{eq:qMPLs} below) may be considered as a regularization parameter for MZVs at non-positive arguments. The approach presented in our paper can be consistently extended to this $q$-analogue of MZVs ($q$MZVs). Indeed, we would like to demonstrate that under the $q$-parameter regularization \cite{Castillo13a,Castillo13b,Ohno12} a proper renormalization of MZVs can be defined. The \emph{$q$-multiple polylogarithm} ($q$MPL) in one variable is defined as 
\begin{align}
\label{qMPL}
	\Li^q_{k_1,\ldots,k_n}(z):=\sum_{m_1>\cdots >m_n>0}\frac{z^{m_1}}{[m_1]_q^{k_1}\cdots [m_n]_q^{k_n}}
\end{align}
with $[m]_q:=\frac{1-q^m}{1-q}$. It turns out that for $|z|<1$ the series in (\ref{qMPL}) is convergent for $k_1,\ldots,k_n\in \bZ$, and especially for $|q|<1$ we obtain the formal power series 
\begin{align}
\label{eq:qMPLs}
	\fz_q(k_1,\ldots,k_n):= \Li^q_{k_1,\ldots,k_n}(q) 
					= \sum_{m_1>\cdots>m_n>0} \frac{q^{m_1}}{[m_1]_q^{k_1}\cdots [m_n]_q^{k_n}} \in \bZ[[q]].
\end{align}
These $q$MZVs were introduced by Ohno, Okuda and Zudilin in \cite{Ohno12}, and further studied in \cite{Castillo13a,Castillo13b}, see also \cite{Singer15,Singer14,Zhao14}. For $k_1 > 1$ and $k_2,\ldots,k_n\geq 1$ we see that 
\begin{align}\label{eq:lim}
	\lim_{q\nearrow 1}\fz_q(k_1,\ldots,k_n) = \z(k_1,\ldots,k_n), 
\end{align}
where $q \nearrow 1$ means $q\to 1$ inside an angular sector $-\frac{\pi}{2} +\varepsilon \leq \operatorname{Arg}(1-q) \leq  \frac{\pi}{2} - \varepsilon$ with $\varepsilon>0$ sufficiently small. It will be convenient for technical reasons to consider the \emph{modified $q$MZVs} introduced in \cite{Ohno12} 
\begin{align}\label{eq:modify}
	\ofz_q(k_1,\ldots,k_n) := (1-q)^{-(k_1+\cdots+k_n)}\fz(k_1,\ldots,k_n).
\end{align}
They are used to establish a Hopf algebra structure on the space of modified $q$MPLs. The modification has to be reversed after renormalization, in order to relate the renormalized $q$MZVs via \eqref{eq:lim} to renormalized MZVs. 

\medskip 

The paper is organized as follows. In Section \ref{sect:mero} we recall the basic results on the meromorphic continuation of MZVs. Section \ref{sect:algebra} contains the main result, i.e., the detailed construction of a Hopf algebra for MPLs at non-positive integers. A generalization of this finding to the $q$-analogue of MZVs defined by Ohno, Okuda and Zudilin is presented as well. In Section \ref{sect:Renorm} we recall the Hopf algebra approach to perturbative renormalization by Connes and Kreimer, and apply one of its main theorems to the renormalization of MPLs at non-positive integer arguments. The $q$-analogue of this result is as well presented, and compared to the classical case.
  
\medskip

\noindent {\bf{Acknowledgement}}: The first author is supported by a Ram\'on y Cajal research grant from the Spanish government. The second and third authors gratefully acknowledge support by ICMAT and the Severo Ochoa Excellence Program. The second author is supported by Agence Nationale de la Recherche (projet CARMA).\\
The authors gratefully acknowledge that most of the discussions and work was carried out during ICMAT Fall School Multiple Zeta Values, Multiple Polylogarithms and Quantum Field Theory (Oct. 7-11, 2013, ICMAT, Madrid), Clay Mathematics Institute Summer School 2014 (June 30 - July 25, 2014, Madrid) and ICMAT Research Trimester on Multiple Zeta Values, Multiple Polylogarithms, and Quantum Field Theory (Sept. 15 - Dec. 19, 2014, ICMAT, Madrid). 
% We thank the referee for helpful comments and calling our attention to the work of H. N. Minh and his collaborators on renormalization of MZVs with respect to the quasi-shuffle product (see \url{http://lipn.univ-paris13.fr/~ngocminh/}).
% 
%%%%%%%%%%%%%%%%%%%%%%%%%%%
%%%%%%%%%%%%%%%%%%%%%%%%%%%

\section{Meromorphic Continuation of MZVs}
\label{sect:mero}

In this section we review some well-known facts about the meromorphic continuation of MZVs. For $n \in \bN$ we consider the function 
\begin{align}
\label{eq:MRiemann}
 \z_n \colon \bC^n \to \bC, 
 \hspace{1cm} 
 \z_n(s_1,\ldots,s_n):= \sum_{m_1 > \cdots > m_n >0}\frac{1}{m_1^{s_1}\cdots m_n^{s_n}}.  
\end{align}

\begin{proposition}[\cite{Krattenthaler07}]
The domain of absolute convergence of the function \eqref{eq:MRiemann} is given by
\begin{align*}
 \left\{(s_1,\ldots,s_n)\in \bC^n\colon \sum_{j=1}^k\RE(s_j)>k, k=1,\ldots,n \right\}.
\end{align*}
In this domain $\zeta_n$ defines an analytic function in $n$ variables. 
\end{proposition}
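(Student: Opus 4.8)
The plan is to separate the two assertions: first pin down the set of absolute convergence, then deduce analyticity from locally uniform convergence. First I would observe that absolute convergence is insensitive to the imaginary parts, since $|m_1^{-s_1}\cdots m_n^{-s_n}| = m_1^{-\sigma_1}\cdots m_n^{-\sigma_n}$ with $\sigma_j := \RE(s_j)$, so the question reduces to the convergence of the positive series $S_n(\sigma_1,\ldots,\sigma_n) := \sum_{m_1>\cdots>m_n>0} m_1^{-\sigma_1}\cdots m_n^{-\sigma_n}$ for real exponents. I would then prove, by induction on $n$, the sharp statement that $S_n(\sigma)$ is finite if and only if $\sum_{j=1}^k \sigma_j > k$ for every $k=1,\ldots,n$, and diverges otherwise, so that boundary points are genuinely excluded. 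The base case $n=1$ is the elementary fact that $\sum_{m\ge 1} m^{-\sigma_1}$ converges iff $\sigma_1 > 1$.

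For the inductive step I would peel off the innermost variable $m_n$ and invoke the elementary two-sided asymptotics of the truncated sum $T(N) := \sum_{m=1}^{N-1} m^{-\sigma_n}$ for $N\ge 2$: it is bounded above and below by positive constants when $\sigma_n > 1$; it is $\asymp \log N$ when $\sigma_n = 1$; and it is $\asymp N^{1-\sigma_n}$ when $\sigma_n < 1$. Writing $S_n = \sum_{m_1>\cdots>m_{n-1}>0} m_1^{-\sigma_1}\cdots m_{n-1}^{-\sigma_{n-1}}\, T(m_{n-1})$, each regime reduces the problem to an $(n-1)$-fold sum: when $\sigma_n > 1$ one gets $S_n \asymp S_{n-1}(\sigma_1,\ldots,\sigma_{n-1})$; when $\sigma_n < 1$ one gets $S_n \asymp S_{n-1}(\sigma_1,\ldots,\sigma_{n-2},\sigma_{n-1}+\sigma_n-1)$; and when $\sigma_n = 1$ the logarithmic factor is absorbed into a slightly smaller exponent $\sigma_{n-1}-\delta$. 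Applying the induction hypothesis, the decisive bookkeeping is that in each case the reduced prefix conditions, together with the sign condition on $\sigma_n$, are equivalent to the full list $\sum_{j=1}^k\sigma_j > k$, $k=1,\ldots,n$; for instance, in the regime $\sigma_n < 1$ the condition $\sum_{j=1}^{n-1}\sigma_j > n-1$ follows automatically from $\sum_{j=1}^n \sigma_j > n$ since $\sigma_n<1$, while the new top condition produced for $S_{n-1}$ is precisely $\sum_{j=1}^n\sigma_j>n$.

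Finally I would deduce analyticity. Each summand $m_1^{-s_1}\cdots m_n^{-s_n} = \exp(-\sum_j s_j\log m_j)$ is entire in $(s_1,\ldots,s_n)$, so it suffices to prove that the series converges uniformly on a neighborhood of every point $s^0$ of the open region. If $\sum_{j=1}^k\sigma^0_j > k+\eta$ holds for all $k$ with some gap $\eta>0$, I would choose a polydisc so small that the real parts remain within $\eta/(2n)$ of $\sigma^0_j$; then the corner exponents $a_j := \sigma^0_j - \eta/(2n)$ still satisfy $\sum_{j=1}^k a_j > k$, because each prefix sum drops by at most $\eta/2 < \eta$. Consequently $|m_1^{-s_1}\cdots m_n^{-s_n}| \le m_1^{-a_1}\cdots m_n^{-a_n}$ throughout the polydisc, the right-hand side being summable by the convergence already established, so the Weierstrass $M$-test yields a holomorphic limit.

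The main obstacle is the inductive step, specifically the trichotomy in $\sigma_n$ and the verification that the reduced convergence conditions recombine into exactly the stated prefix inequalities. The borderline case $\sigma_n = 1$, where one must absorb the $\log$ factor into a perturbed exponent while keeping the induction's equivalence intact -- in particular checking that the excluded boundary still forces divergence -- is the most delicate point.
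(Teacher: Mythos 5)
The paper contains no proof of this proposition: it is quoted from \cite{Krattenthaler07}, so your attempt can only be measured against the standard literature argument --- and it essentially \emph{is} that argument, carried out correctly. Reducing to real exponents, inducting on the depth by summing out the innermost variable, splitting into the three regimes $\sigma_n>1$, $\sigma_n=1$, $\sigma_n<1$ via integral comparison, and checking that the reduced prefix conditions recombine into the stated ones is exactly how the sharp domain is identified. Your bookkeeping in each regime is right: when $\sigma_n<1$ the condition $\sum_{j=1}^{n-1}\sigma_j>n-1$ is implied by $\sum_{j=1}^{n}\sigma_j>n$, and when $\sigma_n>1$ the $k=n$ condition follows from the $k=n-1$ one, so the ``iff'' survives the induction; the analyticity argument via a summable majorant on a small polydisc and the Weierstrass $M$-test is likewise standard and correct. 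The only step you gloss over is in the divergence direction: after summing out $m_n$ (in the regimes $\sigma_n>1$ and $\sigma_n=1$, where $T(m_{n-1})$ is merely bounded below by a positive constant for $m_{n-1}\ge 2$) you compare the sum restricted to $m_{n-1}\ge 2$ with the unrestricted $S_{n-1}$, and these need not be equal term by term. This is repaired by one routine remark: the shift bijection $(m_1,\ldots,m_{n-1})\mapsto(m_1+1,\ldots,m_{n-1}+1)$ maps the full index set onto the restricted one and changes each summand by a factor bounded above and below by positive constants depending only on $\sigma_1,\ldots,\sigma_{n-1}$, so the restricted sum diverges exactly when $S_{n-1}$ does. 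With that sentence added, your proof is complete and sharp (boundary points excluded), which is precisely the content of the cited result.
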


\begin{theorem}[\cite{Akiyama01a,Akiyama01b,Manchon10}]
\label{theo:meroz}
The function $\z_n(s_1,\ldots,s_n)$ admits a meromorphic extension to $\bC^n$. The subvariety $\cS_n$ of singularities is given by
\begin{align*}
 \cS_n= \left\{(s_1,\ldots,s_n)\in \bC^n\colon s_1=1; s_1+s_2=2,1,0,-2,-4,\ldots; \sum_{i=1}^js_i\in \bZ_{\leq j} ~(j=3,4,\ldots,n)\right\}.
\end{align*}
\end{theorem}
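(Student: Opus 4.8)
The plan is to proceed by induction on the depth $n$. The base case $n=1$ is the classical meromorphic continuation of the Riemann zeta function $\z(s_1)$, whose only singularity is the simple pole at $s_1=1$, matching $\cS_1=\{s_1=1\}$. For the inductive step I would isolate the outermost summation variable, writing in the domain of absolute convergence
\begin{align*}
 \z_n(s_1,\ldots,s_n)=\sum_{m_2>\cdots>m_n>0}\frac{1}{m_2^{s_2}\cdots m_n^{s_n}}\sum_{m_1>m_2}\frac{1}{m_1^{s_1}}.
\end{align*}
The inner sum is a tail of a Hurwitz zeta value, $\sum_{m_1>m_2}m_1^{-s_1}=\z(s_1,m_2)-m_2^{-s_1}$ with $\z(s,a):=\sum_{k\ge 0}(k+a)^{-s}$, and the whole argument rests on inserting its Euler--Maclaurin expansion and re-summing.

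Concretely, Euler--Maclaurin yields, for every truncation order $R$,
\begin{align*}
 \sum_{m_1>m_2}\frac{1}{m_1^{s_1}}=\frac{m_2^{1-s_1}}{s_1-1}-\frac12 m_2^{-s_1}+\sum_{r=1}^{R}\frac{B_{2r}}{(2r)!}(s_1)_{2r-1}\,m_2^{-s_1-2r+1}+\rho_R(s_1,m_2),
\end{align*}
where $(s_1)_{k}:=s_1(s_1+1)\cdots(s_1+k-1)$ and $|\rho_R(s_1,m_2)|\le C\,m_2^{-\RE(s_1)-2R-1}$ locally uniformly in $s_1$. Each explicit term has the form $c_j(s_1)\,m_2^{1-s_1-j}$ with $j\in\{0,1,2,4,6,\ldots\}$; crucially the odd shifts $j=3,5,7,\ldots$ are \emph{absent} because $B_3=B_5=\cdots=0$. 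Combining $m_2^{1-s_1-j}$ with $m_2^{-s_2}$ and re-summing produces
\begin{align*}
 \z_n(s_1,\ldots,s_n)=\sum_{j\in\{0,1,2,4,\ldots\}}c_j(s_1)\,\z_{n-1}(s_1+s_2+j-1,s_3,\ldots,s_n)+\cR_R(s),
\end{align*}
where $c_0(s_1)=\tfrac{1}{s_1-1}$ is the only coefficient with a pole and $c_j$ is a polynomial for $j\ge 1$. By the induction hypothesis each $\z_{n-1}$ is meromorphic on $\bC^{n-1}$, so the finite sum is meromorphic; the remainder $\cR_R(s)$ is a multiple series which, for $R$ chosen large relative to a prescribed compact subset of $\bC^n$, converges absolutely and locally uniformly there and is therefore holomorphic. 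As $R$ is arbitrary, this yields a meromorphic continuation of $\z_n$ to all of $\bC^n$.

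It then remains to locate the singularities by pushing $\cS_{n-1}$ through the substitution $(w_1,w_2,\ldots)=(s_1+s_2+j-1,s_3,\ldots)$ and taking the union over $j$. The coefficient pole of $c_0$ gives the hyperplane $s_1=1$. The condition $w_1=1$ of $\cS_{n-1}$ becomes $s_1+s_2=2-j$, and as $j$ ranges over $\{0,1,2,4,6,\ldots\}$ this is exactly $s_1+s_2\in\{2,1,0,-2,-4,\ldots\}$ -- the skipped odd negatives being the direct fingerprint of the vanishing odd Bernoulli numbers. For the deeper conditions $\sum_{i=1}^{\ell}w_i\in\bZ_{\le\ell}$ one finds $\sum_{i=1}^{\ell+1}s_i+j-1\in\bZ_{\le\ell}$, and here the union over $j$ saturates the full set, giving $\sum_{i=1}^{\ell+1}s_i\in\bZ_{\le\ell+1}$ and hence the stated conditions for $j=3,\ldots,n$. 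I expect the main difficulty to be twofold: the analytic bookkeeping needed to show that $\cR_R$ is holomorphic on expanding regions (the ``choose $R$ large'' device, which needs a uniform bound on the truncated inner multiple sum), and the combinatorial tracking explaining why the second condition is the sparse set $\{2,1,0,-2,-4,\ldots\}$ while all later conditions are full intervals $\bZ_{\le j}$ -- the distinction being whether one intersects a single hyperplane $w_1=1$ or an already-saturated family $\sum_{i=1}^\ell w_i\in\bZ_{\le\ell}$. Establishing that these loci are genuinely singular (not merely candidates) finally requires checking that the exhibited polar parts do not cancel, which holds generically along each hyperplane.
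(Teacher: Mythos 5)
The paper itself contains no proof of this theorem: it is imported verbatim, with citations, from the literature (Akiyama--Egami--Tanigawa and related work), so there is no internal argument to compare against. Your proposal is, in substance, exactly the proof in the cited source: induction on depth, Euler--Maclaurin expansion of the tail $\sum_{m_1>m_2}m_1^{-s_1}$, re-summation into finitely many $\z_{n-1}$'s at the shifted arguments $s_1+s_2+j-1$ with $j\in\{0,1,2,4,6,\ldots\}$, and holomorphy of the truncation remainder on domains that exhaust $\bC^n$ as $R\to\infty$; the absence of odd $j\geq 3$ (vanishing odd Bernoulli numbers) is indeed what produces the sparse locus $s_1+s_2\in\{2,1,0,-2,-4,\ldots\}$. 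Two points that you flag but leave open are precisely the ones that require explicit work: first, the transition from depth $2$ to depth $3$ needs the concrete check that the union over $j\in\{0,1,2,4,\ldots\}$ of the \emph{sparse} sets $\{3-j,2-j,1-j,-1-j,-3-j,\ldots\}$ saturates all of $\bZ_{\leq 3}$ (it does: $j=0$ covers $3,2,1$ and the odd negatives, $j=1$ covers $0$ and the even negatives), which is the real reason the later conditions are full intervals $\bZ_{\leq j}$ rather than sparse; second, the assertion that the exhibited polar parts do not cancel, so that $\cS_n$ is exactly the singular locus and not merely a superset, cannot be dismissed as holding ``generically'' --- this non-cancellation is verified in the cited reference and is needed for the equality claimed in the statement. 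With those two computations supplied, your argument is complete and coincides with the standard one.
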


\noindent In the subsequent sections $\z_n$ always denotes the meromorphic continuation of MZVs. 

\begin{remark}\label{rem:mero}
{\rm{In this paper we discuss $\z_n$ restricted to the set $(\bZ_{\leq0})^n$. The Bernoulli numbers are defined by the following generating series 
 \begin{align*}
  \frac{te^t}{e^t-1} = \sum_{m\geq0} \frac{B_m}{m!}t^m.
 \end{align*}
 The first few values are $B_0=1, B_1=\frac{1}{2}, B_2=\frac{1}{6},B_3=0, B_4=-\frac{1}{30}, B_5=0,$ etc., especially $B_{2l+1}=0$ for $l\in \bN$. 
 Therefore we have the following cases for $\z_n$ restricted to non-positive arguments: 
 
 \allowdisplaybreaks{
 \begin{itemize}
  \item Case $n=1$:  For $l \in \bN_0$ we have the well known formula 
  \begin{align*}
   \z_1(-l) = -\frac{B_{l+1}}{l+1}.
  \end{align*}
  
  \item Case $n=2$: In the light of Theorem \ref{theo:meroz} we assume the sum $k_1+k_2$ to be odd. Therefore we obtain from \cite{Akiyama01a} that
  \begin{align*}
   \z_2(-k_1,-k_2) = \frac{1}{2} \left(1+\delta_0(k_2) \right) \frac{B_{k_1+k_2+1}}{k_1+k_2+1}.
  \end{align*}
  
  \item Case $n\geq 3$: From Theorem \ref{theo:meroz} we deduce that
  \begin{align*}
   (\bZ_{\leq 0})^n \subseteq \cS_n.  
  \end{align*}
  Therefore we obtain no information from the meromorphic continuation.   
 \end{itemize}
 }}}
\end{remark}

%%%%%%%%%%%%%%%%%%%%%%%%%%%

\section{Algebraic Framework}
\label{sect:algebra}

We briefly introduce Rota--Baxter algebras, since they conveniently relate to shuffle-type products on word algebras. Two such shuffle products are presented, which encode products of MPLs and $q$MPLs at integer arguments. The main result of this section is the construction of a graded connected commutative and cocommutative shuffle Hopf algebra for ($q$)MPLs at non-positive integer arguments.

%%%%%%%%%%%%%%%%%%%%%%%%%%%

\subsection{Rota--Baxter Algebra and multiple zeta values}
\label{ssect:RBA}

Let $k$ be a ring, $\lambda \in k$ and $\cA$ a $k$-algebra. A \emph{Rota--Baxter operator (RBO) of weight $\lambda$ on $\cA$ over $k$} is a $k$-module endomorphism $L$ of $\cA$ such that 
\begin{align*}
	L(x)L(y) = L(xL(y)) + L(L(x)y) + \lambda L(xy) 
\end{align*}
for any $x,y \in \cA$. A \emph{Rota--Baxter $k$-algebra (RBA) of weight $\lambda$} is a pair $(\cA,L)$ with a $k$-algebra $\cA$ and a Rota--Baxter operator $L$ of weight $\lambda$ on $\cA$ over $k$. On the algebra of continuous functions $C(\bR)$ the integration operator 
\begin{align*}
	R\colon C(\bR) \to C(\bR), \hspace{0.5cm} R[f](z):=\int_{0}^zf(x)\,dx
\end{align*}
is a RBO of weight zero, which is an immediate consequence of the integration by parts formula. We consider the $\bC$-algebra of power series 
\begin{align*}
	\cP_{\geq 1}:=\bigg\{ f(t):=\sum_{k\geq 1}a_k t^k\colon R_f\geq 1 \bigg\}\subseteq t\bC[[t]]
\end{align*}
without a term of degree zero in $t$, and radius of convergence, $R_f$, of at least $1$. We define the operator 
\begin{align*}
	J\colon \cP_{\geq 1} \to\cP_{\geq 1}, \hspace{0.5cm} J[f](t):=\int_{0}^t f(z)\frac{dz}{z}.
\end{align*}
Further the \emph{Euler derivation} $\delta$ is given by 
\begin{align*}
	\delta\colon \cP_{\geq 1} \to \cP_{\geq 1}, \hspace{0.5cm} \delta[f](t):=t\frac{\partial f}{\partial t}(t). 
\end{align*}
 
\begin{proposition}\label{prop:RBOJ} ~
\begin{enumerate}[(i)]
	\item The pair $(\cP_{\geq 1},J)$ is a RBA of weight $\lambda=0$.
   	\item The operator $\delta$ is a derivation, i.e., $\delta[fg] = \delta[f]g + f\delta[g]$, for any $f,g\in \cP_{\geq 1}$.
   	\item The operators $J$ and $\delta$ are mutually inverse, i.e., $J\circ \delta = \delta \circ J = \Id$.
 \end{enumerate}
 \end{proposition}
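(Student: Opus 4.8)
The plan is to verify the three assertions in order, since each is a short direct computation on the power series algebra $\cP_{\geq 1}$. First I would check the derivation property (ii), which is the most elementary: for $f(t)=\sum_{k\geq 1}a_kt^k$ and $g(t)=\sum_{k\geq 1}b_kt^k$ the operator $\delta$ acts on a monomial $t^k$ by $\delta[t^k]=k\,t^k$, and the Leibniz rule $\delta[fg]=\delta[f]g+f\delta[g]$ follows immediately from the ordinary product rule for $\frac{\partial}{\partial t}$ together with the factor $t$ out front; this is just the statement that the grading operator (multiplication of the degree-$k$ part by $k$) is a derivation. I should note that $\delta$ indeed maps $\cP_{\geq 1}$ into itself, since differentiating and multiplying by $t$ preserves the absence of a constant term and does not decrease the radius of convergence.

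Next I would establish (iii), that $J$ and $\delta$ are mutually inverse. The cleanest route is again to compute on monomials: since $J[t^k](t)=\int_0^t z^{k}\frac{dz}{z}=\int_0^t z^{k-1}\,dz=\frac{1}{k}t^k$ for $k\geq 1$, we get $J[t^k]=\frac1k t^k$ and $\delta[t^k]=k\,t^k$, so $\delta\circ J$ and $J\circ\delta$ both act as the identity on each monomial $t^k$ with $k\geq 1$. Because every element of $\cP_{\geq 1}$ has no degree-zero term, the monomials $t^k$ with $k\geq 1$ span the algebra and both composites are the identity; one should remark that the division by $k$ is harmless precisely because $k\geq 1$, which is exactly where the hypothesis $\cP_{\geq 1}\subseteq t\bC[[t]]$ (no constant term) is used. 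The interchange of the integral with the (convergent) power series is justified on the interior of the disk of convergence.

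Finally I would prove (i), that $(\cP_{\geq 1},J)$ is a Rota--Baxter algebra of weight $0$, i.e. that
\begin{align*}
 J[f]\,J[g] = J\bigl[f\,J[g]\bigr] + J\bigl[J[f]\,g\bigr]
\end{align*}
for all $f,g\in\cP_{\geq 1}$. The idea is to differentiate both sides using $\delta$ and then appeal to (ii) and (iii): applying $\delta$ to the left-hand side and using the Leibniz rule gives $\delta[J[f]]\,J[g]+J[f]\,\delta[J[g]]=f\,J[g]+J[f]\,g$, since $\delta\circ J=\Id$; applying $\delta$ to the right-hand side and using $\delta\circ J=\Id$ again gives $f\,J[g]+J[f]\,g$ directly. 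Thus both sides have the same image under $\delta$, and since $\delta$ is injective on $\cP_{\geq 1}$ (being invertible by (iii), with inverse $J$), and both sides lie in $\cP_{\geq 1}$ and vanish at $t=0$, they must be equal. Equivalently one may simply apply $J$ to the identity $f\,J[g]+J[f]\,g=\delta\bigl[J[f]\,J[g]\bigr]$. The main subtlety, and the only place requiring care, is the bookkeeping that all three operators genuinely preserve $\cP_{\geq 1}$ (correct behavior at $t=0$, radius of convergence at least $1$) so that the injectivity-of-$\delta$ argument is legitimate; once (ii) and (iii) are in hand, (i) reduces to this one-line differentiation, so the order (ii) $\to$ (iii) $\to$ (i) is the natural one.
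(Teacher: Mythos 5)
Your proof is correct, but it reverses the logical flow of the paper's argument in an interesting way. The paper proves (i) directly by integration by parts (the weight-zero Rota--Baxter identity for an integral operator \emph{is} integration by parts), handles (ii) as a routine check, and gets (iii) from the fundamental theorem of calculus together with $f(0)=0$. You instead prove (ii) and (iii) first -- with (iii) done by a coefficientwise computation on monomials $t^k$, $k\geq 1$, rather than by the fundamental theorem of calculus -- and then deduce (i) \emph{algebraically}: applying $\delta$ to both sides of the Rota--Baxter identity, using the Leibniz rule and $\delta\circ J=\Id$, and concluding by injectivity of $\delta$ (which follows from $J\circ\delta=\Id$). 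Both routes are sound; yours buys a purely formal derivation of the Rota--Baxter property from the derivation property plus invertibility, with no analytic input beyond the well-definedness of the operators on $\cP_{\geq 1}$, while the paper's is shorter because integration by parts is a standard one-line fact. Your monomial treatment of (iii) also makes explicit exactly where the hypothesis of no constant term enters (the division by $k$ requires $k\geq 1$), a point the paper compresses into the remark that $f(0)=0$. The only mild redundancy in your write-up is the closing appeal to vanishing at $t=0$: once you have $J\circ\delta=\Id$, injectivity of $\delta$ is immediate and no separate constant-of-integration argument is needed.
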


\begin{proof}
Statement (i) follows form integration by part. The second claim is straightforward to show. Finally, item (iii) is an immediate consequence of the fundamental theorem of calculus together with the fact that $f(0)=0$ for any $f\in \cP_{\geq 1}$. 
\end{proof}

\begin{lemma}
\label{lem:Jiteration}
Let $k_1,\ldots,k_n$ be integers. Then $\Li_{k_1,\ldots,k_n}(t) \in \cP_{\geq 1}$, explicitly 
\begin{align*}
  \Li_{k_1,\ldots,k_n}(t)=J^{k_1}[y J^{k_2}[y \cdots J^{k_n}[y]\cdots]](t),
\end{align*}
where $y(t):=\frac{t}{1- t}\in \cP_{\geq 1}$. 
\end{lemma}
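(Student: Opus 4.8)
The plan is to reduce everything to the action of $J$ and $\delta$ on monomials and then induct on the depth $n=\dpt(\mathbf{k})$. First I would record the key computation: for every $m\in\bN$ one has $J[t^m]=\frac{t^m}{m}$ and $\delta[t^m]=mt^m$ directly from the definitions, so by Proposition \ref{prop:RBOJ}(iii) the operator $J$ is invertible on each monomial with $J^{-1}=\delta$, whence
\[
J^k[t^m]=\frac{t^m}{m^k}\qquad(k\in\bZ,\ m\in\bN).
\]
Since $J$ is given by an integral and $\delta$ by differentiation, both act term by term on any element of $\cP_{\geq 1}$, the manipulations being justified by local uniform convergence on the open unit disk.

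Next I would observe that $y(t)=\frac{t}{1-t}=\sum_{m\geq1}t^m\in\cP_{\geq1}$ and set up the induction. For the base case $n=1$, applying the displayed monomial identity term by term gives $J^{k_1}[y](t)=\sum_{m\geq1}\frac{t^m}{m^{k_1}}=\Li_{k_1}(t)$. For the inductive step I would assume that $J^{k_2}[y\cdots J^{k_n}[y]\cdots](t)=\Li_{k_2,\ldots,k_n}(t)$ and compute the product $y\cdot\Li_{k_2,\ldots,k_n}$ as a Cauchy product of absolutely convergent series on $|t|<1$: writing $y=\sum_{j\geq1}t^j$ and $\Li_{k_2,\ldots,k_n}=\sum_{m_2>\cdots>m_n>0}\frac{t^{m_2}}{m_2^{k_2}\cdots m_n^{k_n}}$, the substitution $m_1:=j+m_2$ turns the constraint $j\geq1$ into $m_1>m_2$ and yields
\[
y(t)\,\Li_{k_2,\ldots,k_n}(t)=\sum_{m_1>m_2>\cdots>m_n>0}\frac{t^{m_1}}{m_2^{k_2}\cdots m_n^{k_n}}.
\]
Applying $J^{k_1}$ term by term and using $J^{k_1}[t^{m_1}]=t^{m_1}/m_1^{k_1}$ then produces exactly $\Li_{k_1,\ldots,k_n}(t)$, closing the induction.

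Finally I would verify membership in $\cP_{\geq1}$. The lowest-order term of $\Li_{k_1,\ldots,k_n}$ is $t^{m_1}$ with $m_1\geq n\geq1$, so there is no constant term; and for fixed $m_1$ the coefficient is a sum of $\binom{m_1-1}{n-1}$ terms, each bounded polynomially in $m_1$ (one uses $m_i<m_1$ to dominate the factors $m_i^{-k_i}$ when $k_i\leq0$, while factors with $k_i>0$ are at most $1$), hence grows at most polynomially, giving radius of convergence at least $1$.

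I expect the only point requiring genuine care to be the repeated passage between the infinite series and the operators $J,\delta$ for possibly negative exponents $k_i$: this is precisely where the invertibility $J^{-1}=\delta$ from Proposition \ref{prop:RBOJ} and the closure of $\cP_{\geq1}$ under multiplication (needed so that $y\cdot\Li_{k_2,\ldots,k_n}$ lies in $\cP_{\geq1}$ before $J^{k_1}$ is applied) enter. Beyond this bookkeeping the argument is a routine induction on the depth, with no deeper obstacle.
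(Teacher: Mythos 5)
Your proposal is correct and follows essentially the same route as the paper: induction on the depth $n$, with the base case handled by the monomial identity $J^{k}[t^m]=t^m/m^{k}$ for $k\in\bZ$ applied term by term to $y=\sum_{m\geq1}t^m$, and the inductive step carried out via the Cauchy-product reindexing $m_1=j+m_2$ followed by a termwise application of $J^{k_1}$ — exactly the computation in the paper's proof. The only difference is cosmetic: you explicitly verify the radius-of-convergence claim for membership in $\cP_{\geq1}$ (which the paper leaves implicit) and use the unified formula $J^k[t^m]=t^m/m^k$ where the paper splits the base case into $k\geq0$ and $k<0$.
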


\begin{proof}
Using the fact that $J^{-1}=\delta$ we prove the claim for $\bk:=(k_1,\ldots,k_n)\in \bZ^n$ by induction on its depth, $\dpt(\bk)=n$. For $\dpt(\bk)=1$ we easily compute 
\begin{align*}
 J^k[y](t)& = \left.\begin{cases}
             \sum_{m\geq 1}\frac{t^m}{m^k}, & \text{for~} k\geq 0 \\
             \sum_{m\geq 1}m^{|k|}{t^m} = \sum_{m\geq 1}\frac{t^m}{m^k},  &  \text{for~} k< 0
            \end{cases} \right\}
           = \Li_k(t) 
\end{align*}
for any $k \in \bZ$. In the inductive step we get 
\allowdisplaybreaks{
\begin{align*}
    J^{k_1}[y J^{k_2}[y \cdots J^{k_n}[y]\cdots]](t) 
 & = J^{k_1}\left[\sum_{m>0}t^m \sum_{m_2 > \cdots > m_n >0}\frac{t^{m_2}}{m_2^{k_2}\cdots m_n^{k_n}} \right] \\
 & = J^{k_1}\left[ \sum_{m_1 >m_2 > \cdots >m_n >0}\frac{t^{m_1}}{m_2^{k_2}\cdots m_n^{k_n}} \right] \\
 & = \sum_{m_1 >m_2 > \cdots >m_n >0}\frac{t^{m_1}}{m_1^{k_1}m_2^{k_2}\cdots m_n^{k_n}} \\
 & = \Li_{k_1,\ldots,k_n}(t)
\end{align*}}
using the induction hypothesis.  
\end{proof}

This lemma gives rise to the following algebraic formalism. Let $X_0:=\{j,d,y\}$, and $W_0$ denotes the set of words on the alphabet $X_0$, subject to the rule $jd=dj=\be$, where $\be$ denotes the empty word. Therefore any word $w\in W_0$ can be uniquely written in the canonical form
\begin{align*}
 w=j^{k_1}yj^{k_2}y\cdots j^{k_{n-1}}y j^{k_n}
\end{align*}
for $k_1,\ldots,k_n\in \bZ$ using the notation $j^{-1}=d$ and $j^0=\be$. The length of the word $w$ above is $|w|=k_1+\cdots+k_n+n-1$.
Further, $\cA_0$ denotes the vector space $\cA_0:=\langle W_0 \rangle_\bQ$  spanned by the words in $W_0$.
Next we define the product $\sho  \colon \cA_0\otimes \cA_0 \to \cA_0$  by $\be\sho w := w \sho \be := w$ for any word $w\in W_0$, and recursively with respect to the sum of the length of two words in $W_0$:
\allowdisplaybreaks{
\begin{enumerate}[(i)]
 \item $yu\sho   v := u \sho  yv := y(u\sho  v)$,
 \item $ju \sho  jv := j(u\sho  jv) + j(ju\sho  v)$,
 \item $du\sho  dv := d(u \sho  dv) - u \sho  d^2v $,
 \item $du\sho  jv := d(u\sho  jv)-u \sho  v$,
 \item $ju\sho dv := d(ju\sho v)-u\sho v$.
\end{enumerate}}
\begin{remark}{\rm{

  \textbullet~ Note that (iv) can be deduced from (iii) by replacing $v$ by $j^2v$. \\
    \textbullet~ (iii) does not really define $du\sho  dv$ by induction on the sum of lengths of two words, because $|du|+|dv|=|u|+|d^2v|$. Using (i) and writing $u'=du=d^kyw$ for some $k\ge 1$, we can however get a recursive definition by iterating (iii) as follows:
\begin{align*}
  d^kyw\sho  dv =&~d\big(d^{k-1}yw\sho  dv-d^{k-2}yw\sho  d^2v+\cdots\\
  & +(-1)^{k-1}yw\sho  d^kv\big)+(-1)^ky(w\sho  d^{k+1}v).
\end{align*}
 }}
\end{remark}

\begin{lemma}\label{lem:idealT}
The $\bQ$-vector space 
\begin{align*}
 \cT:=\langle j^{k_1}yj^{k_2}y\cdots j^{k_{n-1}}y j^{k_n}\in W_0 \colon k_n\neq 0, n\in \bN \rangle_{\bQ}
\end{align*}
is a two sided ideal of $(\cA_0,\sho)$.
\end{lemma}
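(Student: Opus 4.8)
The plan is to rephrase membership in $\cT$ in terms of the last letter of a reduced word and then run an induction on the recursive definition of $\sho$. Writing a word in canonical form $j^{k_1}y\cdots yj^{k_n}$, the condition $k_n\neq 0$ is equivalent to saying that the reduced word ends in the letter $j$ or in the letter $d$; thus $\cT$ is spanned by exactly those basis words, while its complement is spanned by $\be$ together with the words ending in $y$. Since $\sho$ is bilinear it suffices to treat basis words, and I would prove the single symmetric statement that $u\sho v\in\cT$ whenever at least one of $u,v$ ends in $j$ or $d$. This statement yields both inclusions $\cA_0\sho\cT\subseteq\cT$ and $\cT\sho\cA_0\subseteq\cT$ at once, which is what ``two-sided ideal'' requires.

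First I would fix a well-founded measure for the induction. The total number of letters of a reduced word decreases under rules (i), (ii), (iv), (v), but rule (iii) leaves it unchanged; here I would invoke the iterated form of (iii) recorded in the Remark above, which strictly lowers the number of initial $d$'s of the first word until a leading $y$ appears and rule (i) applies. Ordering pairs lexicographically by (total letter count, number of initial $d$'s of the first argument) therefore gives a legitimate induction. The base cases are immediate: if $u=\be$ then $u\sho v=v$, which lies in $\cT$ precisely because the hypothesis forces $v$ to end in $j$ or $d$, and symmetrically for $v=\be$. The case governed by rule (i) is equally transparent: there $u\sho v=y(u'\sho v)$ (or $y(u\sho v')$), the inner shuffle still has a factor ending in $j$ or $d$ and so lies in $\cT$ by induction, and prepending the letter $y$ neither changes the last letter nor triggers any reduction, so the result stays in $\cT$.

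The substantive cases are (ii)--(v), where the recursion prepends a $j$ or a $d$. The guiding observation is that prepending a letter only affects the front of a reduced word, hence leaves its last letter untouched and so preserves membership in $\cT$ --- with one exception: a prepended $j$ annihilates a component equal to the single letter $d$ (since $jd=\be$), and dually a prepended $d$ annihilates a component equal to $j$ (since $dj=\be$), in either case producing the empty word $\be\notin\cT$. The heart of the argument is to show that all such $\be$-contributions cancel. The natural way to organize this is to strengthen the induction hypothesis to include the claim that $u\sho v$ has vanishing $\be$-component whenever $u$ or $v$ is nonempty; feeding this back into rules (ii)--(v) then shows that the potentially offending single-letter components arising after prepending exactly cancel, owing to the minus signs in (iii)--(v) and to the balance of the two terms in (ii).

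The main obstacle is precisely this cancellation bookkeeping in cases (ii)--(v): one must check, term by term, that the contributions which would leave $\cT$ cancel rather than assuming $\cT$-membership is preserved naively. That the signs are arranged exactly for this is already visible in the smallest instance, $j\sho yd=y(j\sho d)=y\cdot 0=0\in\cT$, where the two $y$-ending words that a naive (reduction-only) shuffle would produce are killed by the relation $j\sho d=0$. A secondary technical point, already flagged in the Remark, is the well-foundedness of the recursion through rule (iii); handling it via the iterated form given there keeps the induction legitimate.
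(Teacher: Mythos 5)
Your setup (recasting membership in $\cT$ via the last letter of the canonical form, merging the two inclusions into one symmetric statement, and repairing well-foundedness of the recursion through the iterated form of (iii)) is sound, and it matches in spirit the paper's own proof, which is likewise an induction on total length with a case analysis on leading letters. But the step you yourself call the heart of the argument --- that the only way a term can escape $\cT$ is through an $\be$-component created by the annihilations $jd=dj=\be$, and that all such contributions cancel --- is a genuine gap, and it cannot be closed, because the offending terms are of a different nature and they do not cancel. Concretely, take the word $j$, which lies in $\cT$ ($n=1$, $k_1=1\neq 0$), and shuffle it with $jy$. Rule (ii) forces
\begin{align*}
j\sho jy \;=\; j(\be\sho jy)+j(j\sho y)\;=\;j^{2}y+jyj,
\end{align*}
and $j^{2}y$ ends in $y$, hence is not in $\cT$; no annihilation $jd\to\be$ occurs anywhere in this computation, so a strengthened induction hypothesis about vanishing $\be$-components is simply silent about this term, and there is no partner to cancel it. Similarly, rule (iv) forces $dy\sho j=d(y\sho j)-y\sho\be=dyj-y$, where the bad term $-y$ is the subtracted piece $-u\sho v$ of the rule evaluated at $v=\be$, again not an $\be$-component. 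These computations are forced by the rules, so they are counterexamples to the statement itself: $\cT$ is not stable under shuffling with the single letter $j$, and the defect propagates upward, e.g.\ $yj\sho dy=y(j\sho dy)=ydyj-y^{2}\notin\cT$. (The single letter $d$ is harmless: $d\sho jw=d(jw)-w=w-w=0$ and $d\sho dw=0$, which is why all the small cases you checked, such as $j\sho yd=0$, do work.)

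For comparison, the paper's proof has exactly the same blind spot: every inductive case there writes the factor belonging to $\cT$ in the form $x\tilde u a$ with $x$ its leading letter and $a\in\{j,d\}$, which presupposes that this factor has length at least two, while the base cases only cover total length $\le 2$; the configurations ``single letter $j$ against a longer word starting with $j$ or $d$'' are never treated, and those are precisely the ones above. So your instinct that cases (ii)--(v) require delicate cancellation bookkeeping was correct; the bookkeeping just does not come out in the lemma's favor, and no reorganization of the induction can change that. What is true, is provable by exactly your induction scheme (with no cancellations needed), and is what the remainder of the paper actually uses, is the weaker statement that the span of admissible words $\langle Y_0\rangle_{\bQ}$ is a subalgebra of $(\cA_0,\sho)$: if $u$ and $v$ end in $y$, then every word appearing in the recursions (i)--(v) still ends in $y$, the stripped words never become empty, and prepending $j$ or $d$ to a word ending in $y$ can never produce $\be$ or a pure $j$-power. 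The identification of that subalgebra with the quotient $\cA_0/\cT$, however, fails together with the lemma.
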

\begin{proof}
Let $a\in \{j,d\}$ and $u:=u'a\in W_0$ and $v\in W_0$. We prove $u\sho v \in \cT$ by induction on $r:=|u|+|v|$. The base cases are true because we observe for $r=1$
that $d\sho  \be = d, j\sho \be = j$ and for $r=2$ that 
\begin{align*}
  d\sho y = yd, \hskip 8mm j\sho y = yj,\\ 
  d\sho d = 0,\hskip 4mm j\sho j = 2j^2,\hskip 4mm j\sho d=d\sho j=0.
\end{align*}
For the inductive step we have several cases: \\
\noindent\textbullet~ 1st case: $u=y\tilde{u}a$ or $v=y\tilde{v}$. This is an immediate consequence of (i) and the induction hypothesis. \\

\noindent\textbullet~ 2nd case: $u=j\tilde{u}a$ and $v=j\tilde{v}$. We observe using (ii) and the induction hypothesis that 
\begin{align*}
 j\tilde{u}a \sho j\tilde{v} = j(\tilde{u}a \sho j\tilde{v} + j\tilde{u}a \sho\tilde{v}) \in \cT. 
\end{align*}

\noindent\textbullet~ 3rd case: $u=d\tilde{u}a$ and $v=j\tilde{v}$. We observe using (iv) and the induction hypothesis that 
\begin{align*}
 d\tilde{u}a \sho j\tilde{v} = d(\tilde{u}a \sho j\tilde{v}) - \tilde{u}a \sho \tilde{v} \in \cT. 
\end{align*}

\noindent\textbullet~ 4th case: $u=j\tilde{u}a$ and $v=d\tilde{v}$. We observe using (v) and the induction hypothesis that 
\begin{align*}
 j\tilde{u}a \sho d\tilde{v} = d(j\tilde{u}a \sho \tilde{v}) - \tilde{u}a \sho \tilde{v} \in \cT. 
\end{align*}

\noindent\textbullet~ 5th case: $u=d\tilde{u}a$ and $v=d\tilde{v}$. We observe using (iii) that 
\begin{align*}
 d\tilde{u}a \sho d\tilde{v} = d(\tilde{u}a \sho d\tilde{v}) - \tilde{u}a \sho d^2\tilde{v}. 
\end{align*}
By induction hypothesis the first term is an element of $\cT$. If $\tilde{u}a$ is not a word consisting purely of $d$ we apply rule (iii) until we hit a letter not equal to $d$ and we are in one of the above cases. Therefore we only consider the case, where $\tilde{u}a$ is a word consisting purely of $d$, i.e., $\tilde{u}a=d^n$ for $n\in \bN$. Now we prove $d^n\sho d^mw=0$ for any $w\in W_0$ and $m\in \bN$. For $n=1$ we have $d\sho d^m w= d(\be \sho d^m w)- \be \sho d^{m+1}w = 0$. Therefore we obtain by induction hypothesis that 
\begin{align*}
 d^{n+1} \sho d^m w = d(d^{n}\sho d^{m}w) - d^n\sho d^{m+1}w =0. 
\end{align*}
All in all we have shown that $\sho(\cT \otimes \cA_0 )\subseteq \cT $.

Since $\sho$ is not commutative we also have to prove $v\sho u \in \cT$ by induction on $r:=|v|+|u|$. The base cases are true. The first four cases are completely analogous to the first four cases above. We only discuss the following case: $v=d\tilde{v}$ and $u=d\tilde{u}a$. \\
We observe using (iii) that 
\begin{align*}
  d\tilde{v} \sho d\tilde{u}a = d(\tilde{v}  \sho d \tilde{u}a ) - \tilde{v} \sho d^2\tilde{u}a. 
\end{align*}
By induction hypothesis the first term is an element of $\cT$. If $\tilde{v}$ starts with $j$ or $y$ we are in one of the above cases. Only the case $\tilde{v}=d^n$ for $n\in \bN$ has to be considered. By the same induction as in the previous case we obtain that the last term is zero. This proves $\sho(\cA_0 \otimes \cT )\subseteq \cT$. The proof is now complete. 
\end{proof}

Let $Y_0:=\{\be\}\cup W_0y$ be the set of \textsl{admissible words}, i.e., words which do not end up with a $j$ or a $d$. It is easily seen that $\cA'_0:=\langle Y_0 \rangle_\bQ$ is a subalgebra of $(\cA_0,\sho)$ isomorphic to $\cA_0/\cT$. A priori, the product $\sho$ on $\cA_0$ is neither commutative nor associative. Now let $\cL$ (resp. $\cL'$) be the ideal of $\cA_0$ (resp. $\cA'_0$) generated by 
$$\{j^k\big(d(u\sho v)-du\sho v-u\sho dv\big),\,u,v\in W_0y, \,k\in\bZ\}.$$
Let $\cB_0$ (resp. $\cB'_0$) be the quotient algebra $\cA_0/\cL$ (resp. $\cA'_0/\cL'$). We obviously have the isomorphism:
$$\cB'_0\sim \cA_0/(\cT+\cL).$$

\begin{proposition}\label{lem:Jalgebra}
The pair $(\cB_0,\sho)$ is a commutative, associative and unital algebra.
\end{proposition}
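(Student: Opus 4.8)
The plan is to show that, modulo $\cL$, the recursive rules (i)--(v) defining $\sho$ are exactly the rules obeyed by ordinary (commutative, associative) multiplication, so that the commutativity and associativity which \emph{fail} already in $\cA_0$ are restored in the quotient. Unitality needs nothing: $\be\sho w=w\sho\be=w$ holds in $\cA_0$, so the class of $\be$ is a two-sided unit in $\cB_0$.

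First I would pin down the role of $\cL$ by comparing the rules with honest function multiplication. For an admissible word $w=j^{k_1}y\cdots j^{k_n}y$ write $f_w:=\Li_{k_1,\ldots,k_n}\in\cP_{\geq 1}$ as in Lemma \ref{lem:Jiteration}, and extend $w\mapsto f_w$ linearly on $\cA'_0$. By induction on total length one checks $f_{u\sho v}=f_uf_v$: the inductive step for rule (i) is $yf_u\cdot f_v=y(f_uf_v)$; for rule (ii) it is the weight-zero Rota--Baxter identity for $J$ (Proposition \ref{prop:RBOJ}(i)); and for rule (iii) it is exactly $\delta[f_u]\,\delta[f_v]=\delta(f_u\,\delta[f_v])-f_u\,\delta^2[f_v]$, i.e.\ the derivation property of $\delta=J^{-1}$ (Proposition \ref{prop:RBOJ}(ii)). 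Thus the rules mirror multiplication under $j\leftrightarrow J$, $d\leftrightarrow\delta$, and $y\leftrightarrow$ multiplication by the function $y$; the one rule that is not already an identity in $\cA_0$ is the derivation rule for $d$, which is precisely the family generating $\cL$ (the case $k=0$, the prefactor $j^k$ accounting for an extra $J^k$ or $\delta^{|k|}$). This computation both explains why commutativity must fail in $\cA_0$ yet hold after quotienting, and supplies the identities to be used formally below.

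The core is then a single induction on total length carried out inside $\cB_0$, establishing simultaneously, for all words, the three congruences modulo $\cL$: (C) $u\sho v\equiv v\sho u$; (A) $(u\sho v)\sho w\equiv u\sho(v\sho w)$; and the global Leibniz rule (L) $d(u\sho v)\equiv du\sho v+u\sho dv$. Statement (L) holds at the base from the generators of $\cL$ for admissible $u,v$; I would first propagate it to arbitrary words, a non-admissible word being an admissible one carrying a trailing block $j^{k}$, and prepended letters being controlled by the fact that prepending $j$ or $d$ permutes the generators $j^k(\cdots)\mapsto j^{k\pm1}(\cdots)$, so that $\bar d,\bar j$ descend to mutually inverse operators on $\cB_0$. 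With (L) in hand, commutativity (C) is proved by splitting on the leading letters of $u$ and $v$ through rules (i)--(v): every case is symmetric except rule (iii), where $du\sho dv$ peels $d$ from the left factor while $dv\sho du$ peels it from the right, and the two are reconciled by rewriting with (L) and the induction hypothesis. Associativity (A) is handled the same way, case-splitting on the leading letters of the three factors and using (i), (ii) and (L) to pull $y$, $j$ and $d$ outside the product, reducing to strictly shorter words.

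The main obstacle is rule (iii) (equivalently the pair (iv)/(v)): it is asymmetric and, as noted in the Remark after the definition of $\sho$, does not by itself decrease the sum of lengths, so the induction becomes well-founded only after the iterated expansion given there, together with the separate treatment of the degenerate products $d^n\sho d^m w$ exactly as in the proof of Lemma \ref{lem:idealT}, where they vanish. Closely tied to this is a genuine interdependence: establishing that $\bar d$ is a well-defined derivation on $\cB_0$ (stability of $\cL$ under prepending $d$) is entangled with (L) itself, so all three statements must be closed off in one induction rather than in sequence. Once (C), (A) and (L) are simultaneously verified, the asymmetry of (iii) is seen to be absorbed by the Leibniz relations in $\cL$ in every case, and $(\cB_0,\sho)$ is a commutative, associative, unital algebra.
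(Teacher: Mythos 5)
Your proposal is correct and follows essentially the same route as the paper's own proof: an induction on total word length with case analysis on the leading letters via rules (i)--(v), in which the Leibniz relations generating $\cL$ are used to symmetrize the asymmetric $d$--$d$ case, rule (iii) is iterated until one factor begins with $y$, and the degenerate products $d^n\sho d^m w$ are handled as in Lemma \ref{lem:idealT}. Your additional ingredients --- the analytic realization $w\mapsto f_w$ as motivation, and the bundling of commutativity, associativity, the Leibniz statement (L), and the stability of $\cL$ under prepending letters into one simultaneous induction --- are sound organizational refinements that make explicit what the paper's proof uses implicitly.
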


\begin{proof}
We first prove commutativity $u'\sho v'-v'\sho u'\in \cL$ by induction on $r=|u'|+|v'|$. The cases $r=0$ and $r=1$ are immediate. Several cases must be considered:

\noindent\textbullet~1st case: $u'=yu$ or $v'=yv$. The induction hypothesis immediately applies, using (i).\\

\noindent\textbullet~2nd case: $u'=ju$ and $v'=jv$. Then we have by induction hypothesis:
\begin{align*}
ju\sho jv-jv\sho ju=j(ju\sho v+u\sho jv-jv\sho u-v\sho ju)\in\cL.
\end{align*}

\noindent\textbullet~3rd case: $u'=du$ and $v'=jv$ or vice-versa. We have then:
\begin{align*}
du\sho jv-jv\sho du&=d(u\sho jv)-u\sho v-d(jv\sho u)+v\sho u\\
&=d(u\sho jv-jv\sho u)-(u\sho v-v\sho u),
\end{align*}
which belongs to $\cL$ by induction hypothesis.\\

\noindent\textbullet~4th case: $u'=du$ and $v'=dv$. Then $dv\sho du-d(dv\sho u)+d^2v\sho u\in\cL$, hence:
\begin{align*}
du\sho dv-dv\sho du&=d(u\sho dv)-u\sho d^2v-d(dv\sho u)+d^2v\sho u \mod \cL\\
&= d(u\sho dv-dv\sho u)-(u\sho d^2v-d^2v\sho u)\mod \cL.
\end{align*}
The first term belongs to $\cL$ by induction hypothesis. We further suppose that $u'$ is written $d^kyw$ for some $k\ge 1$ and $w\in Y_0$. Iterating the process using (iii) we finally get $du\sho dv-dv\sho du=(-1)^k(yw\sho d^{k+1}v-d^{k+1}v\sho yw)\mod\cL$. We are then back to the first case.

\vskip 3mm

Associativity follows by showing $u'\sho (v'\sho  w') = (u'\sho  v')\sho  w'$ via induction on the sum $|u'|+|v'|+|w'|$. If one of the words is the empty one nothing is to show. Now let $u'=au$, $v'=bv$ and $w'=cw$ with $a,b,c\in \{d,j,y\}$. \\

\noindent\textbullet~1st case: one of the letters is $y$, for example $u'=yu$. Using the induction hypothesis, we obtain
\allowdisplaybreaks{
\begin{align*}
 (yu\sho  v')\sho  w' 
 = (y(u\sho  v'))\sho  w' &= y((u\sho  v')\sho  w') \\
 &= y(u\sho  (v'\sho  w'))\mod \cL\\
 &= yu \sho  (v'\sho  w')\mod\cL.
\end{align*}}
Note that the other cases $v'=yv$ or $w'=yw$ are similar, and the arguments are completely analogous.\\

\noindent\textbullet~2nd case: $a=b=c=j$. On the one hand we have 
\allowdisplaybreaks{
\begin{align*}
 (ju\sho  jv)\sho  jw  
 =&~ j((u\sho  jv)\sho  jw) + j((ju\sho  v)\sho  jw) \\
  & + j(j(u\sho  jv)\sho w) + j (j(ju \sho  v)\sho w)\\
 =&~ j((u\sho  jv)\sho  jw) + j((ju\sho  v)\sho  jw) + j((ju\sho  jv)\sho  w), 
\end{align*}}
on the other hand 
\allowdisplaybreaks{
\begin{align*}
 ju\sho  (jv\sho  jw)
 =&~ j(u\sho  j(v\sho  jw)) + j(u\sho  j(jv\sho  w)) \\
  & + j(ju\sho  (v\sho  jw)) + j (ju \sho  (jv\sho  w))\\
 =&~ j(u\sho  (jv\sho  jw)) + j(ju\sho  (v\sho  jw)) + j(ju\sho  (jv\sho  w)).
\end{align*}}
Hence $(ju\sho  jv)\sho  jw =ju\sho  (jv\sho  jw)\mod \cL$.\\

\noindent\textbullet~3rd case: two $j$'s and one $d$. On the one hand we have 
\allowdisplaybreaks{
\begin{align*}
 (ju \sho  jv) \sho  dw =&~ d(j(u\sho  jv)\sho  w) - (u\sho  jv)\sho  w \\
 &+ d(j(ju\sho  v)\sho  w) - (ju \sho  v)\sho  w \\
 =&~d((ju\sho  jv)\sho  w) - (u\sho  jv)\sho  w - (ju\sho  v)\sho  w,
\end{align*}}
on the other hand 
\begin{align*}
 ju \sho  (jv \sho  dw) &= ju\sho  d(jv\sho  w)-ju\sho (v\sho  w)\\
 &= d(ju\sho (jv\sho w)) - u\sho (jv \sho  w) - ju \sho (v\sho  w).
\end{align*}
then $(ju \sho  jv) \sho  dw=ju \sho  (jv \sho  dw)\mod \cL$.\\

\noindent\textbullet~4th case: two $d$'s and one $j$. We have to prove 
\begin{align*}
 (du \sho  dv) \sho  jw =  du \sho  (dv \sho  jw)\mod\cL. 
\end{align*}
It suffices to show that 
\begin{align*}
 (d^kyu \sho  dv) \sho  jw =  d^kyu \sho  (dv \sho  jw)\mod\cL
\end{align*}
with $u\in W_0$ and $k\in \bN$. Using (iii) we observe 
\allowdisplaybreaks{
\begin{align*}
 & (d^kyu \sho  dv) \sho  jw =d(d^{k-1}yu\sho dv)\sho jw-(d^{k-1}yu\sho d^2v)\sho jw\\
 &=d\big((d^{k-1}yu\sho dv)\sho  jw\big) - (d^{k-1}yu\sho dv)\sho  w-(d^{k-1}yu\sho  d^2v)\sho  jw \\
 &=d\big(d^{k-1}yu\sho (dv\sho  jw)\big) - d^{k-1}yu\sho (dv\sho  w)	-(d^{k-1}yu\sho  d^2v)\sho  jw \mod\cL\\
 &=d^kyu\sho (dv\sho  jw) + d^{k-1} yu\sho  (d^2v\sho  jw) - (d^{k-1} yu\sho  d^2v)\sho jw \mod\cL . 
\end{align*}}
For the difference of the two terms in the previous line to belong to $\cL$, it suffices to prove 
$$ 
	d^{k-1} yu\sho  (d^2v\sho  jw) =(d^{k-1} yu\sho  d^2v)\sho  jw\mod\cL.
$$
Applying the above procedure iteratively this could be reduced to  
\begin{align*}
 yu\sho  (d^kv\sho  jw)  = (yu\sho  d^kv)\sho  jw \mod\cL, 
\end{align*}
which is true by using (i) and the induction hypothesis.\\

\noindent\textbullet~5th case: $a=b=c=d$. We have to prove 
\begin{align*}
 (du \sho  dv) \sho  dw =  du \sho  (dv \sho  dw)\mod\cL.
\end{align*}
It suffices to show that 
\begin{align*}
 (d^kyu \sho  dv) \sho  dw =  d^kyu \sho  (dv \sho  dw)\mod\cL,
\end{align*}
with $u\in W_0$ and $k\in \bN$. Using (iii) we observe 
\allowdisplaybreaks{
\begin{align*}
 & (d^kyu\sho  dv) \sho  dw = d(d^{k-1}yu\sho dv)\sho dw-(d^{k-1}yu\sho d^2v)\sho dw\\
 & = d\big((d^{k-1}yu\sho  dv)\sho  dw\big) - (d^{k-1}yu\sho  dv)\sho  d^2w -(d^{k-1}yu\sho d^2v)\sho  dw \\
 & = d^kyu\sho (dv\sho  dw) + d^{k-1}yu\sho (d^2v\sho  dw)+ d^{k-1}yu\sho (dv\sho  d^2w) \\
 & ~-(d^{k-1}yu\sho d^2v)\sho  dw -  (d^{k-1}yu\sho dv)\sho  d^2w\mod\cL.
\end{align*}}
Iteratively applying this procedure leads -- as in the 4th case -- to the claim using (i) and the induction hypothesis. Proposition \ref{lem:Jalgebra} is thus proven.
\end{proof}
\noindent Now we define the map $\z_t^\shuffle\colon \cB'_0 \to \bQ[[t]]$ by  $\z_t^\shuffle(\be):=1$, and for any $k_1,\ldots,k_n\in \bZ$,
\begin{align*}
	j^{k_1}y\cdots j^{k_n}y \mapsto \z_t^\shuffle(j^{k_1}y\cdots j^{k_n}y) := \Li_{k_1,\ldots,k_n}(t).
\end{align*}

\begin{lemma}\label{lem:characterJ}
The map $\z_t^\shuffle$ is multiplicative, i.e., is an algebra morphism. 
\end{lemma}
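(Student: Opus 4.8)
The plan is to read off that, under $\z_t^\shuffle$, the three letters $j,d,y$ act as the operators $J$, $\delta=J^{-1}$ and multiplication by $y(t)=\tfrac{t}{1-t}$ on $\cP_{\geq 1}$, and that the five defining relations (i)--(v) of $\sho$ are exactly the algebraic identities these operators satisfy. Concretely, I first view $\z_t^\shuffle$ as the $\bQ$-linear map on $\cA'_0$ determined by $\z_t^\shuffle(\be)=1$ and the recursion
\[
\z_t^\shuffle(jw)=J[\z_t^\shuffle(w)],\qquad \z_t^\shuffle(dw)=\delta[\z_t^\shuffle(w)],\qquad \z_t^\shuffle(yw)=y\cdot\z_t^\shuffle(w).
\]
By Proposition \ref{prop:RBOJ}(iii) this respects the relation $jd=dj=\be$, so it is well defined; and since every admissible word ends in $y$, the innermost step produces $y\in\cP_{\geq 1}$ and every later step stays in $\cP_{\geq 1}$ (as $J$, $\delta$ and multiplication by $y$ all preserve $\cP_{\geq 1}$), so $J$ is never fed a series with a constant term. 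Finally, Lemma \ref{lem:Jiteration} identifies this map with the prescribed values $\Li_{k_1,\ldots,k_n}(t)$ on admissible words.

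Second, I would prove $\z_t^\shuffle(u\sho v)=\z_t^\shuffle(u)\,\z_t^\shuffle(v)$ for all admissible $u,v$ by induction on the length $|u|+|v|$, with $u=\be$ or $v=\be$ as the trivial base case. The induction splits according to the leading letters of $u$ and $v$, exactly as in the proof of Lemma \ref{lem:idealT}, and each case is closed by translating the relevant rule into an operator identity, writing $f=\z_t^\shuffle(u)$ and $g=\z_t^\shuffle(v)$: rule (i) into the fact that multiplication by $y$ factors out of a product; rule (ii) into the weight-zero Rota--Baxter relation of Proposition \ref{prop:RBOJ}(i), so that $\z_t^\shuffle(ju\sho jv)=J[f\,Jg]+J[(Jf)g]=(Jf)(Jg)$ after using the induction hypothesis; and rules (iii)--(v) into the Leibniz rule for $\delta$ together with $\delta\circ J=J\circ\delta=\Id$ (Proposition \ref{prop:RBOJ}(ii),(iii)). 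For example, case (iv) reduces to $\delta[f\,Jg]-fg=(\delta f)(Jg)+f(\delta Jg)-fg=(\delta f)(Jg)$, as required.

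Third, I would descend to $\cB'_0$. The ideal $\cL'$ is generated by the elements $j^k\big(d(u\sho v)-du\sho v-u\sho dv\big)$, and applying the multiplicativity just established together with the recursion sends such a generator to $J^k\big[\delta(fg)-(\delta f)g-f(\delta g)\big]=J^k[0]=0$ by the Leibniz rule, with $f=\z_t^\shuffle(u)$, $g=\z_t^\shuffle(v)$. Since $\z_t^\shuffle$ is multiplicative it then annihilates the whole ideal $\cL'$, hence factors through $\cB'_0$; the induced map is an algebra morphism because on $\cB'_0$ the product $\sho$ is genuinely associative, commutative and unital (Proposition \ref{lem:Jalgebra}). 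This simultaneously yields well-definedness on $\cB'_0$ and the claimed multiplicativity.

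I expect the only real difficulty to be rule (iii), $du\sho dv=d(u\sho dv)-u\sho d^2v$, which does not decrease $|u|+|v|$ (indeed $|du|+|dv|=|u|+|d^2v|$, as noted after the definition of $\sho$), so the term $u\sho d^2v$ cannot be treated by the primary induction. As in the fourth and fifth cases of Proposition \ref{lem:Jalgebra}, I would resolve this by a secondary induction on the number of leading $d$'s of $u$: writing $u=d^kyw$ and iterating (iii) pushes the exponents onto $v$ until the leading $y$ is exposed, at which point rule (i) applies and the length strictly drops. The accompanying operator expressions telescope precisely because $\delta$ is a derivation with $\delta J=\Id$, so the surplus terms cancel in pairs and the induction closes. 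All remaining cases reduce the length outright.
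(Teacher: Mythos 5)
Your proof is correct and takes essentially the same route as the paper's: both rest on translating the defining rules (i)--(v) of $\sho$, case by case, into the operator identities of Proposition \ref{prop:RBOJ} together with the mixed identity $\delta[J[f]g]=J[f]\delta[g]+fg$, which is precisely the content of the paper's (much terser) argument. The extra care you take --- the double induction needed because rule (iii) does not decrease total length, and the check that the generators of $\cL'$ are annihilated so that the map descends to $\cB'_0$ --- simply fills in details the paper leaves implicit.
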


\begin{proof}
From Proposition \ref{prop:RBOJ} (ii) and (iii) we obtain for any $f,g\in \cP_{\geq 1}$
\begin{align}
\label{eq:mixed}
	\delta[J[f]g] = J[f]\delta[g] + fg.
\end{align}
Therefore the definition of $\sho $ and Proposition \ref{prop:RBOJ} (i), (ii), (iii) and \eqref{eq:mixed} imply that 
\begin{align*}
	\z_t^{\sh} \colon \cB'_0\to \bQ[[t]], 
	\hspace{0.5cm} j^{k_1}y \cdots j^{k_n}y \mapsto J^{k_1}[y \cdots J^{k_n}[y]\cdots](t)
\end{align*}
with $k_1,\ldots, k_n\in \bZ$ is an algebra morphism. 
\end{proof}

Next we show that if we restrict the shuffle product $\sho $ to admissible words corresponding to positive arguments we obtain the ordinary shuffle product. Let $\mathcal{C}:=\bQ\be\oplus j \bQ \langle j,y \rangle y$ and $\mathcal{D}:=\bQ\be\oplus x_0\bQ\langle x_0,x_1\rangle x_1$.
\begin{lemma}\label{lem:jash}
 The algebras $(\mathcal{C},\sho )$ and $(\mathcal{D},\sh)$ are isomorphic, where $\sh$ denotes the ordinary shuffle product. 
\end{lemma}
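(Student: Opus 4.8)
The plan is to exhibit an explicit algebra isomorphism and to verify multiplicativity by induction, rather than to guess a letter‑to‑letter map (which fails: one checks $jy\sho jy=2\,jyjy$ whereas the naive substitution would demand $x_0x_1\sh x_0x_1=2x_0x_1x_0x_1+4x_0^2x_1^2$). The correct dictionary is forced by the identity of $1$‑forms $\omega_1=y\,\omega_0$, i.e. $\frac{dt}{1-t}=\frac{t}{1-t}\cdot\frac{dt}{t}$, which in Rota--Baxter form reads $\int_0^t\omega_1\,g=J[y\,g]$ and $\int_0^t\omega_0\,g=J[g]$. This says $x_0\leftrightarrow j$ and $x_1\leftrightarrow jy$, so I would define $\Phi\colon\mathcal C\to\mathcal D$ on the canonical words spanning $\mathcal C$ by
\begin{align*}
 \Phi\bigl(j^{k_1}y\cdots j^{k_n}y\bigr):=x_0^{k_1-1}x_1\cdots x_0^{k_n-1}x_1 ,
\end{align*}
the inverse of the monoid homomorphism $\psi\colon x_0\mapsto j,\ x_1\mapsto jy$. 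By construction $\Phi$ intertwines the two realizations sending a word to $\Li_{k_1,\ldots,k_n}$, namely $\z_t^\shuffle$ (Lemma \ref{lem:characterJ}) and the classical iterated–integral map. Since $\psi$ is injective with image exactly the words in which every $y$ is preceded by a $j$, the map $\Phi$ is a linear bijection on the word bases, and the whole content is the identity $\Phi(u\sho v)=\Phi(u)\sh\Phi(v)$.

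I would first record the two reduction rules that follow directly from the definition of $\Phi$: if $w$ begins with $j$ then $\Phi(jw)=x_0\,\Phi(w)$, while $\Phi(jyw)=x_1\,\Phi(w)$. In particular $\Phi(u)$ begins with $x_0$ precisely when $u$ begins with $jj$ and with $x_1$ precisely when $u$ begins with $jy$. I would then prove $\Phi(u\sho v)=\Phi(u)\sh\Phi(v)$ by induction on $|u|+|v|$, the base case $u=\be$ or $v=\be$ being trivial. Since every nonempty word of $\mathcal C$ begins with $j$, rule (ii) always applies first: writing $u=ju'$, $v=jv'$ gives $u\sho v=j(u'\sho jv')+j(ju'\sho v')=j(u'\sho v)+j(u\sho v')$.

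The inductive step then splits according to the leading letters of $\Phi(u),\Phi(v)$. When both begin with $jj$, both summands $u'\sho v,\ u\sho v'$ begin with $j$, so applying $\Phi(j\,\cdot)=x_0\Phi(\,\cdot)$ and the induction hypothesis reproduces exactly $x_0A\sh x_0B=x_0(A\sh x_0B)+x_0(x_0A\sh B)$. When $u=jyw$ begins with $jy$, I apply rule (i) to the exposed $y$: $u'\sho v=yw\sho v=y(w\sho v)$, so $j(u'\sho v)=jy(w\sho v)$, and, if $v$ also begins with $jy$, rule (i) likewise turns the second summand into $jy(u\sho z)$; using $\Phi(jy\,\cdot)=x_1\Phi(\,\cdot)$, $\Phi(j\,\cdot)=x_0\Phi(\,\cdot)$ and the induction hypothesis now matches the shuffle recursions for $x_1A\sh x_0B$ and $x_1A\sh x_1B$. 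Checking these configurations and their symmetric counterparts establishes multiplicativity, and hence that $\Phi$ is an algebra isomorphism. As a sanity check, $\Phi(jjy\sho jjy)=\Phi(2\,jjyjjy+4\,jjjyjy)=2x_0x_1x_0x_1+4x_0^2x_1^2=x_0x_1\sh x_0x_1$, reflecting $\Li_2^2=2\Li_{2,2}+4\Li_{3,1}$.

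The main obstacle, and the only delicate point, is that $\Phi$ is \emph{not} a letter‑to‑letter homomorphism: a single $x_1$ corresponds to the two‑letter block $jy$. Hence the recursion defining $\sho$, which branches on the single leading letters $j$ or $y$, is not aligned step by step with the shuffle recursion, which branches on $x_0$ or $x_1$. The reconciliation rests on the observation that rule (i), $yu\sho v=u\sho yv=y(u\sho v)$, always pushes a freshly exposed $y$ to the very front, where it is immediately re‑paired with the outer $j$ produced by rule (ii); consequently every term occurring in the expansion of $u\sho v$ is again an admissible word with positive arguments, on which $\Phi$ is evaluated through the two reduction rules above. Verifying this closure — that no genuinely $y$‑initial word survives — is what makes the case analysis go through. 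A more conceptual, if less self‑contained, alternative would bypass the induction entirely: both $\sho$ and $\sh$ are transported to ordinary multiplication of multiple polylogarithms by $\z_t^\shuffle$ and by the iterated–integral map respectively, so that $\Phi$ is the composite of these two realizations, the cost being that one must then invoke linear independence of the $\Li_{k_1,\ldots,k_n}$.
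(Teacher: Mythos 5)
Your approach is the same as the paper's: the paper's entire proof consists of writing down the dictionary $x_0\leftrightarrow j$, $x_1\leftrightarrow jy$, i.e.\ the map $x_0^{k_1-1}x_1\cdots x_0^{k_n-1}x_1\mapsto j^{k_1}yj^{k_2}y\cdots j^{k_n}y$ (stated in the direction $\mathcal{D}\to\mathcal{C}$), declaring multiplicativity ``easily seen'' and bijectivity obvious. Your induction on $|u|+|v|$, with the case split on leading blocks $jj$ versus $jy$ and the observation that rule (i) immediately re-pairs an exposed $y$ with the outer $j$ produced by rule (ii), is exactly the verification the paper omits, and it is correct as far as it goes; your sanity check $jjy\sho jjy=2\,jjyjjy+4\,jjjyjy$ is also right.

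There is, however, a genuine defect in the bijectivity step, which your write-up shares with the paper's proof but which your choice of direction makes worse. With the paper's literal definition, $\mathcal{C}=\bQ\be\oplus j\bQ\langle j,y\rangle y$ is spanned by \emph{all} words $j^{k_1}yj^{k_2}y\cdots j^{k_n}y$ with $k_1\geq 1$ and $k_2,\ldots,k_n\geq 0$; in particular it contains $jy$ (the case $k_1=1$) and words with consecutive $y$'s such as $jyy$ (some $k_i=0$). On such words your $\Phi$ fails: $\Phi(jyy)$ would be $x_0^{0}x_1x_0^{-1}x_1$, which is meaningless, and $\Phi(jy)=x_1$, $\Phi(jyjy)=x_1x_1$ do not begin with $x_0$, hence lie outside $\mathcal{D}$. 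So the claim that $\Phi$ is ``a linear bijection on the word bases'' is false: the words in which every $y$ is preceded by a $j$ do not span $\mathcal{C}$, and their images lie in $\mathcal{D}$ only when moreover $k_1\geq 2$. (The paper's map $\mathcal{D}\to\mathcal{C}$ has the mirror-image problem: it is well defined and injective but misses $jy$, $jyy$, \ldots, so it is not onto $\mathcal{C}$.) What your induction actually proves --- and all that the paper's proof proves --- is that $(\mathcal{D},\sh)$ is isomorphic to the subalgebra of $(\mathcal{C},\sho)$ spanned by the words with $k_1\geq 2$ and $k_2,\ldots,k_n\geq 1$; to obtain the lemma as stated one must either read $\mathcal{C}$ as that smaller span or give an additional argument for the words with $k_1=1$ or some $k_i=0$. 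Note that your proposed fallback via $\z_t^\shuffle$ and linear independence of the $\Li_{k_1,\ldots,k_n}$ runs into the same mismatch, since the extra words of $\mathcal{C}$ correspond to polylogarithms with zero entries, which do not arise from $\mathcal{D}$.
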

\begin{proof}
It is easily seen that $ \Phi \colon  (\mathcal{D},\sh) \to (\mathcal{C},\sho )$ given by $\be \mapsto \be$ and
\begin{align*}
  x_0^{k_1-1}x_1 x_0^{k_2-1}x_1 \cdots x_0^{k_n-1}x_1 \mapsto j^{k_1}y j^{k_2}y \cdots j^{k_n}y
\end{align*}
is an algebra morphism, for $k_1,\ldots,k_n\in \bN$, with $k_1>1$, $n\in \bN$. Since $\Phi$ is bijective the proof is complete. 
\end{proof}

%%%%%%%%%%%%%%%%%%%%%%%%%%%
%%%%%%%%%%%%%%%%%%%%%%%%%%%

\subsection{$q$-multiple zeta values}
\label{ssect:qRBA}

For a formal power series $f\in \bQ[[t]]$ we define the $q$-dilation operator as $E_q[f](t):=f(qt).$ Let $\cA:=t\bQ[[t,q]]$ be the space of formal power series in the variables $t$ and $q$, without a term of degree zero in $t$. We can interpret $\cA$ as the $\bQ[[q]]$-algebra $t\bQ[[t]]$. Then the $\bQ[[q]]$-linear map $P_q\colon \cA\to \cA$ is defined by
\begin{align}
\label{eq:opP}
	P_q[f](t):=\sum_{n\geq 0}E_q^n[f](t). 
\end{align}
Furthermore, the \emph{$q$-difference operator} $D_q\colon \cA \to \cA$ is defined as $D_q:=\Id - E_q.$ We have the following known result:

\begin{proposition}[\cite{Castillo13b}]
\label{prop:MRBO}~
\begin{enumerate}[(i)]
  \item The pair $(\cA,P_q)$ is a RBA of weight $\lambda=-1$.
  \item For any $f,g\in \cP_{\geq 1}$ the operator $D_q$ satisfies the generalized Leibniz rule, i.e.,
  	\begin{align*}
   		D_q[fg] = D_q[f]g + fD_q[g] - D_q[f]D_q[g].
   	\end{align*}
  \item The operators $P_q$ and $D_q$ are mutually inverse, i.e., $D_q\circ P_q = P_q \circ D_q = \Id$.
\end{enumerate}
\end{proposition}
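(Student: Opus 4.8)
The plan is to reduce all three assertions to a single structural fact: the dilation operator $E_q$ is an algebra endomorphism of $\cA$, since $E_q[fg](t) = (fg)(qt) = f(qt)g(qt) = E_q[f](t)\,E_q[g](t)$. Before anything else I would record that $P_q$ is genuinely well defined on $\cA$: writing $f(t)=\sum_{k\geq 1}a_k(q)t^k$ with $a_k\in\bQ[[q]]$, one has $E_q^n[f](t)=\sum_{k\geq 1}a_k(q)q^{nk}t^k$, so
\begin{align*}
 P_q[f](t)=\sum_{k\geq 1}a_k(q)\Big(\sum_{n\geq 0}q^{nk}\Big)t^k=\sum_{k\geq 1}\frac{a_k(q)}{1-q^k}\,t^k,
\end{align*}
and each $1/(1-q^k)=\sum_{j\geq 0}q^{jk}\in\bQ[[q]]$ is a legitimate power series precisely because $f$ carries no constant term in $t$. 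This is where the restriction to $\cA=t\bQ[[t,q]]$ is used, and it guarantees that the formal series defining $P_q$ converges coefficientwise.

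For (iii) I would argue by telescoping. Since $E_q$ commutes with its own powers, $E_q[P_q[f]]=\sum_{n\geq 0}E_q^{n+1}[f]=\sum_{n\geq 1}E_q^n[f]$, whence
\begin{align*}
 D_q\circ P_q[f]=P_q[f]-E_q[P_q[f]]=\sum_{n\geq 0}E_q^n[f]-\sum_{n\geq 1}E_q^n[f]=f,
\end{align*}
and symmetrically $P_q\circ D_q[f]=P_q[f]-P_q[E_q[f]]=\sum_{n\geq 0}E_q^n[f]-\sum_{n\geq 1}E_q^n[f]=f$. Assertion (ii) is then a direct expansion: using multiplicativity of $E_q$ one computes $D_q[fg]=fg-E_q[fg]=fg-E_q[f]E_q[g]$, and expanding $D_q[f]g+fD_q[g]-D_q[f]D_q[g]$ with $D_q=\Id-E_q$ collapses, after cancellation of the mixed terms, to exactly $fg-E_q[f]E_q[g]$. (This part uses nothing beyond $E_q$ being a multiplicative endomorphism, so it holds on any such commutative algebra.)

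The main obstacle is (i), the Rota--Baxter identity of weight $-1$, which I would establish by decomposing the index set of a double sum. Writing $P_q[x]P_q[y]=\sum_{n,m\geq 0}E_q^n[x]E_q^m[y]$, the plan is to match each term on the right-hand side of the weight $-1$ relation with a sub-region of $\{(n,m):n,m\geq 0\}$. Using multiplicativity in the guise $E_q^n[x\,E_q^m[y]]=E_q^n[x]\,E_q^{n+m}[y]$ and reindexing $k=n+m$, one finds $P_q[xP_q[y]]=\sum_{m\geq n\geq 0}E_q^n[x]E_q^m[y]$ and, symmetrically, $P_q[P_q[x]y]=\sum_{n\geq m\geq 0}E_q^n[x]E_q^m[y]$, while $P_q[xy]=\sum_{n\geq 0}E_q^n[x]E_q^n[y]$ is the diagonal. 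Since the two triangular regions $\{m\geq n\}$ and $\{n\geq m\}$ cover the full quadrant but overlap exactly on the diagonal, inclusion--exclusion gives that the combination $P_q[xP_q[y]]+P_q[P_q[x]y]-P_q[xy]$ reproduces $\sum_{n,m\geq 0}E_q^n[x]E_q^m[y]=P_q[x]P_q[y]$. The only point requiring care is the bookkeeping of the substitution $k=n+m$ that converts the nested geometric summations into the triangular index regions; once that is set up, the identity follows by comparing supports term by term.
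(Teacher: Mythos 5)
Your proof is correct and complete. Note, however, that the paper offers no proof of this proposition at all: it is stated with a citation to \cite{Castillo13b}, so there is no internal argument to compare yours against; your write-up supplies exactly what the paper delegates to the reference. The structure you chose is the standard one for summation-type Rota--Baxter operators: everything rests on $E_q$ being a multiplicative endomorphism, the coefficientwise computation $P_q[f](t)=\sum_{k\geq 1}\frac{a_k(q)}{1-q^k}t^k$ correctly isolates where the absence of a constant term in $t$ is needed (for $k=0$ the geometric series $\sum_n q^{nk}$ would diverge), the telescoping argument for (iii) and the expansion for (ii) are routine, and for (i) the decomposition of the quadrant $\{(n,m)\colon n,m\geq 0\}$ into the two triangles $\{m\geq n\}$ and $\{n\geq m\}$ overlapping on the diagonal is precisely the classical inclusion--exclusion proof that such operators have weight $-1$; this is in all likelihood the same argument as in the cited source. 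One cosmetic remark: part (ii) of the statement reads ``for any $f,g\in \cP_{\geq 1}$,'' which appears to be a slip for $f,g\in\cA$ (the operator $D_q$ is defined on $\cA$); your observation that the identity holds on any commutative algebra equipped with a multiplicative endomorphism covers either reading.
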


\begin{remark}{\rm{
Recall that the Jackson integral
$$    	
	\mathcal{J}[f](x) := \int_{0}^{x} f(y) d_qy =(1-q)\:\sum_{n \ge 0} f(q^nx) q^nx  	
$$
is the $q$-analogue of the classical indefinite Riemann integral $R$. For functions $\frac{f(x)}{x}$ -- where the Jackson integral is well defined -- it reduces to 
$$
	(1-q)\:\sum_{n \ge 0} f(q^nx) = \int_{0}^{x} \frac{f(y)}{y} d_qy = (1-q) P_q[f](x),
$$ 
which is the $q$-analogue of the integral operator $J$. Correspondingly, the $q$-analogue of the Euler derivation $\delta$ reduces to $(\Id - E_q)$. 
}}
\end{remark}

\begin{lemma}[\cite{Castillo13b}]\label{lem:characterq}
Let $k_1,\ldots,k_n\in \bZ$. Then we have
\begin{align*}
	\ofz_q(k_1,\ldots,k_n)= P_q^{k_1}[yP_q^{k_2}[y\cdots P_q^{k_n}[y]\cdots]](q). 
\end{align*}
\end{lemma}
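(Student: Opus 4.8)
The plan is to imitate the proof of Lemma \ref{lem:Jiteration}, with the Rota--Baxter operator $P_q$ playing the role held there by $J$, and to proceed by induction on the depth $n=\dpt(\mathbf k)$. Before starting the induction I would record two elementary facts. The first is that the monomials $t^m$ are simultaneous eigenvectors of all the operators in play: since $E_q^n[t^m](t)=(q^nt)^m=q^{nm}t^m$, summing the geometric series gives
\begin{align*}
 P_q[t^m](t)=\sum_{n\geq 0}q^{nm}t^m=\frac{t^m}{1-q^m},
\end{align*}
while $D_q[t^m](t)=t^m-q^mt^m=(1-q^m)t^m$. Because $P_q$ and $D_q$ are mutually inverse on $\cA$ (Proposition \ref{prop:MRBO}(iii)), this yields
\begin{align*}
 P_q^{k}[t^m](t)=\frac{t^m}{(1-q^m)^{k}}\qquad\text{for every }k\in\bZ,
\end{align*}
where for $k<0$ the left-hand side is read as a power of $D_q$. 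The expression is a well-defined element of $\cA=t\bQ[[t,q]]$, since $1-q^m$ is invertible in $\bQ[[q]]$ for $m\geq 1$.

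The second fact concerns multiplication by $y(t)=\frac{t}{1-t}=\sum_{j\geq 1}t^j$. For any series $f(t)=\sum_{m_2>\cdots>m_n>0}a_{m_2,\ldots,m_n}\,t^{m_2}$ one computes, exactly as in the classical case,
\begin{align*}
 y(t)f(t)&=\sum_{j\geq 1}\ \sum_{m_2>\cdots>m_n>0}a_{m_2,\ldots,m_n}\,t^{j+m_2}\\
 &=\sum_{m_1>m_2>\cdots>m_n>0}a_{m_2,\ldots,m_n}\,t^{m_1},
\end{align*}
so that multiplication by $y$ introduces a fresh top index $m_1$ subject only to $m_1>m_2$, leaving the coefficient unchanged. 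These are precisely the two moves used in Lemma \ref{lem:Jiteration}.

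With these in hand the induction is immediate. For $n=1$ the eigenvalue formula gives $P_q^{k}[y](t)=\sum_{m\geq 1}\frac{t^m}{(1-q^m)^{k}}$. For the inductive step, writing $g(t):=P_q^{k_2}[y\cdots P_q^{k_n}[y]\cdots](t)$ and applying first the multiplication-by-$y$ rule and then the eigenvalue formula for $P_q^{k_1}$ gives
\begin{align*}
 P_q^{k_1}[y\,g](t)
 &=P_q^{k_1}\Big[\sum_{m_1>\cdots>m_n>0}\frac{t^{m_1}}{(1-q^{m_2})^{k_2}\cdots(1-q^{m_n})^{k_n}}\Big]\\
 &=\sum_{m_1>\cdots>m_n>0}\frac{t^{m_1}}{(1-q^{m_1})^{k_1}\cdots(1-q^{m_n})^{k_n}}.
\end{align*}
Evaluating at $t=q$ produces $\sum_{m_1>\cdots>m_n>0}\frac{q^{m_1}}{(1-q^{m_1})^{k_1}\cdots(1-q^{m_n})^{k_n}}$. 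To reconcile this with the left-hand side of the statement, I would use that $[m]_q=\frac{1-q^m}{1-q}$ gives $[m_j]_q^{-k_j}=(1-q)^{k_j}(1-q^{m_j})^{-k_j}$; pulling the factor $(1-q)^{k_1+\cdots+k_n}$ out of the defining sum of $\fz_q$ and dividing by it according to \eqref{eq:modify} yields exactly the expression above, which is the claimed identity.

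The only genuinely delicate point, and the step I would treat most carefully, is the meaning of $P_q^{k_i}$ when $k_i<0$ together with the verification that every intermediate series stays in $\cA$ and that the formal substitution $t\mapsto q$ is legitimate. The eigenvalue computation quietly uses $P_q^{-1}=D_q$ (Proposition \ref{prop:MRBO}(iii)) and that $D_q$ preserves $\cA$; one should also note that each monomial $t^{m_1}$ in the final series carries $q$-order at least $m_1\geq n$, so that the substitution $t=q$ defines a genuine element of $\bZ[[q]]$ and no question of analytic convergence arises. Everything else is the same bookkeeping of summation indices as in Lemma \ref{lem:Jiteration}.
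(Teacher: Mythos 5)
Your proof is correct; the paper itself gives no argument for this lemma (it simply cites \cite{Castillo13b}), and your induction is precisely the $q$-analogue of the paper's own proof of Lemma \ref{lem:Jiteration}, with the eigenvalue relation $P_q^{k}[t^m]=(1-q^m)^{-k}t^m$ (valid for all $k\in\bZ$ via $P_q^{-1}=D_q$) replacing $J^{k}[t^m]=m^{-k}t^m$. The extra care you take over the substitution $t\mapsto q$ and the reconciliation with the modification \eqref{eq:modify} is exactly what is needed and is handled correctly.
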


Surprisingly enough, the algebraic formalism for $q$MZVs is simpler than in the classical case. Let $X_{-1}:=\{p,d,y\}$. By $W_{-1}$ we denote the set of words on the alphabet $X_{-1}$, subject to the rule $pd=dp=\be$, where $\be$ denotes the empty word. Again, $\cA_{-1}$ denotes the algebra spanned by the words in $W_{-1}$, i.e., $\cA_{-1}:=\langle W_{-1} \rangle_\bQ$.
Then we define the product $\shm \colon \cA_{-1}\otimes \cA_{-1} \to \cA_{-1}$ by $\be\shm  w  := w \shm  \be := w$ for any $w\in W_{-1}$, and for any words $u,v\in W_{-1}$
\allowdisplaybreaks{
\begin{enumerate}[(i)]
 \item $yu\shm  v := u \shm  yv := y(u\shm v)$,
 \item $pu \shm  pv := p(u\shm  pv) + p(pu\shm  v) -p(u \shm v)$,
 \item $du \shm  dv := u  \shm dv +du \shm  v -d(u \shm  v)$,
 \item $du\shm  pv = pv\shm  du := d(u\shm  pv)+du \shm  v - u\shm v$.
\end{enumerate}}

\begin{remark}{\rm{
We can deduce (iv) from (iii).}} 
\end{remark}
\begin{lemma}\label{lem:qalgebra}
The pair $(\cA_{-1},\shm )$ is a commutative, associative and unital algebra. 
\end{lemma}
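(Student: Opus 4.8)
The plan is to follow the scheme of Proposition~\ref{lem:Jalgebra}, while exploiting the feature that makes the $q$-case genuinely easier: whereas the classical rule (iii), $du\sho dv=d(u\sho dv)-u\sho d^2v$, does \emph{not} lower the total length (indeed $|du|+|dv|=|u|+|d^2v|$), every one of the defining rules (i)--(iv) for $\shm$ expresses $u\shm v$ through shuffles of \emph{strictly smaller} total length. In (ii)--(iv) each shuffle on the right-hand side involves a pair of words whose lengths sum to $|u|+|v|+1$ or $|u|+|v|$, both smaller than $|pu|+|pv|=|du|+|dv|=|u|+|v|+2$, while the subsequent prepending of $p$ or $d$ and the reduction $pd=dp=\be$ can only shorten words. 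Consequently the recursion is well founded on the total length $|u|+|v|$, the product requires no passage to a quotient by an ideal $\cL$, and commutativity and associativity can both be proved by a direct induction on total length. Before starting I would check that $\shm$ is well defined: the rules apply according to the leading letters of the two reduced words, the only overlap being when both begin with $y$, where (i) is consistent since $y(u'\shm yv')=y(yu'\shm v')$ again reduces to (i). Unitality holds by definition.

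For commutativity I would prove $u\shm v=v\shm u$ by induction on $r=|u|+|v|$, the cases $r\le 1$ being immediate. Rule (iv) is symmetric by definition, and rule (i) becomes symmetric upon applying the induction hypothesis, since $yu\shm v=y(u\shm v)=y(v\shm u)=v\shm yu$. For (ii) and (iii) each term on the right-hand side is a shuffle of strictly shorter words, so the induction hypothesis applies and transforms, for instance, $pu\shm pv=p(u\shm pv)+p(pu\shm v)-p(u\shm v)$ into $p(pv\shm u)+p(v\shm pu)-p(v\shm u)=pv\shm pu$; the $d$-case (iii) is handled identically.

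For associativity I would show $u\shm(v\shm w)=(u\shm v)\shm w$ by induction on $|u|+|v|+|w|$, with a case distinction on the leading letters of the three words. If any word begins with $y$, rule (i) pulls the $y$ outside and the induction hypothesis finishes the case, exactly as in the first case of Proposition~\ref{lem:Jalgebra}. If the three leading letters lie in $\{p,d\}$, I would expand both $u\shm(v\shm w)$ and $(u\shm v)\shm w$ using the appropriate rules among (ii)--(iv); since every resulting sub-shuffle has strictly smaller total length, the induction hypothesis, together with the already-established commutativity (to cut down the number of configurations), matches the two sides term by term.

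I expect the genuine work, though not the genuine difficulty, to reside in the mixed associativity cases (two $p$'s and one $d$, two $d$'s and one $p$, and the all-$d$ case), where the several correction terms of the form $-p(\cdots)$, $-d(\cdots)$ and $-(\cdots)$ must be tracked and shown to cancel in pairs. The structural point I would emphasize is that no analogue of the classical obstruction arises: because rule (iii) is here already length-reducing, one never meets the non-terminating recursion that in Proposition~\ref{lem:Jalgebra} forced the introduction of the ideal $\cL$. This is exactly the sense in which the $q$-formalism is simpler, and it is why the statement asserts associativity and commutativity of $(\cA_{-1},\shm)$ on the nose rather than modulo an ideal.
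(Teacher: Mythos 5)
Your overall strategy is the intended one: the paper's own ``proof'' of Lemma~\ref{lem:qalgebra} is a one-line reference to \cite[Theorem~7]{Castillo13b}, and the argument there (carried out in this paper, for the two-letter analogue, in Proposition~\ref{prop:algebraH}) is precisely the direct induction on total length that you describe. Your key structural observation is correct: each of (ii)--(iv) rewrites a product in terms of shuffles of strictly smaller total length, so the recursion is well founded and no analogue of the ideal $\cL$ of Proposition~\ref{lem:Jalgebra} is needed. Your well-definedness check and your commutativity argument are complete and correct.

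The associativity step, however, has a genuine gap, sitting exactly where you locate the ``genuine work''. You propose to expand both bracketings ``using the appropriate rules among (ii)--(iv)'' and match terms via the induction hypothesis. The obstacle is that (ii)--(iv) are stated only for reduced words with prescribed leading letters, whereas the inner products one must expand are linear combinations of words whose leading letters are \emph{not} uniform, precisely because of the rewriting rule $pd=dp=\be$. Concretely, for the triple $(pu,pv,dw)$ one side is $pu\shm(pv\shm dw)$, and the inner product $pv\shm dw = d(w\shm pv)+dw\shm v-w\shm v$ contains the block $d(w\shm pv)$: since $w\shm pv$ in general contains $p$-leading words, prepending $d$ triggers reductions (already $d\shm pv=d(pv)+d\shm v-v=d\shm v$ shows this), and the resulting terms begin with $p$, $d$ or $y$, so no single rule applies to the outer product ``term by term''. (A smaller inaccuracy: the first level of expansion does not lower the total length --- $p(u\shm pv)\shm pw$ still involves total length $|pu|+|pv|+|pw|$; lengths only drop after the outer rule has also been applied.) The missing ingredient is an auxiliary lemma upgrading (ii)--(iv) to identities valid for \emph{arbitrary} words, with prepending of $p$ and $d$ understood modulo the reduction: for all words $x,y$ one has
\begin{align*}
d(x\shm y)&=dx\shm y+x\shm dy-dx\shm dy, & px\shm py&=p(x\shm py)+p(px\shm y)-p(x\shm y), & dx\shm py&=d(x\shm py)+dx\shm y-x\shm y.
\end{align*}
These can be established by a subsidiary induction, using the defining rules, the commutativity you have already proved, and the fact that the prepending operators $p(\cdot)$ and $d(\cdot)$ are mutually inverse on $\cA_{-1}$. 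Once available, they may be applied bilinearly to the inner products (after writing, say, $pu\shm pv=p(u\shm pv+pu\shm v-u\shm v)$); every resulting term is then a genuine triple product of words of strictly smaller total length, and the term-by-term matching you envisage goes through in all cases, including the mixed ones. So the approach is sound, but without this extension across reductions the induction stalls exactly in the mixed cases.
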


\begin{proof}
The proof is similar to that of \cite[Theorem~7]{Castillo13b}, and left to the reader. 
\end{proof}

\noindent Next we introduce the set  of words ending in the letter $y$ and containing the empty word
\begin{align*}
	Y_{-1}:=W_{-1}y \cup\{\be\}\subseteq W_{-1},
\end{align*}
subject to the rule $pd=dp=\be$. Note that $(\langle Y_{-1} \rangle_\bQ,\shm )$ is a subalgebra of $(\cA_{-1},\shm )$. Moreover we introduce the map $\ofz_q^\shuffle\colon \langle Y_{-1} \rangle_\bQ \to \bQ[[q]]$ by 
\begin{align*}
	p^{k_1}y\cdots p^{k_n}y \mapsto \ofz_q^\shuffle(p^{k_1}y\cdots p^{k_n}y) := \ofz_q(k_1,\ldots,k_n)
\end{align*}
for any integers $k_1,\ldots,k_n$.

\begin{lemma}[\cite{Castillo13b}]
The map $\ofz_q^\shuffle$ is an algebra morphism. 
\end{lemma}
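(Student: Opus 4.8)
The plan is to factor $\ofz_q^\shuffle$ through the operator calculus on $\cA=t\bQ[[t,q]]$ and reduce multiplicativity to the three structural identities collected in Proposition \ref{prop:MRBO}, exactly as Lemma \ref{lem:characterJ} does in the classical weight-zero situation. Concretely, I would first extend $\ofz_q^\shuffle$ to a $\bQ$-linear map $\Psi\colon\cA_{-1}\to\cA$ defined on words by the recursion $\Psi(\be):=1$, $\Psi(pw):=P_q[\Psi(w)]$, $\Psi(dw):=D_q[\Psi(w)]$ and $\Psi(yw):=y\cdot\Psi(w)$. By Lemma \ref{lem:characterq} this gives $\Psi(p^{k_1}y\cdots p^{k_n}y)=P_q^{k_1}[y\cdots P_q^{k_n}[y]\cdots]$, whose value at $t=q$ is $\ofz_q(k_1,\ldots,k_n)$. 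Since evaluation at $t=q$ is an algebra homomorphism $\cA\to\bQ[[q]]$, it then suffices to prove that $\Psi$ is multiplicative, i.e.\ $\Psi(u\shm v)=\Psi(u)\Psi(v)$ for all $u,v\in W_{-1}$, and to restrict to the subalgebra $\langle Y_{-1}\rangle_\bQ$. The first point to check is that $\Psi$ is well defined modulo $pd=dp=\be$, which is immediate from Proposition \ref{prop:MRBO}(iii): since $P_q$ and $D_q$ are mutually inverse, $\Psi(pd\,w)=P_q[D_q[\Psi(w)]]=\Psi(w)$ and likewise for $dp$.

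Next I would prove $\Psi(u\shm v)=\Psi(u)\Psi(v)$ by induction on the total reduced-word length $|u|+|v|$, the cases $u=\be$ or $v=\be$ being trivial, by matching each defining rule (i)--(iv) of $\shm$ with an identity for $(P_q,D_q,y)$. Writing $f:=\Psi(u)$, $g:=\Psi(v)$ and applying the induction hypothesis to the strictly shorter products on the right-hand sides, rule (i) reduces to the trivial $y(fg)=(yf)g$; rule (ii) becomes exactly the Rota--Baxter relation of weight $-1$, namely $P_q[f]P_q[g]=P_q[fP_q[g]]+P_q[P_q[f]g]-P_q[fg]$ of Proposition \ref{prop:MRBO}(i); and rule (iii), after rearrangement, becomes exactly the generalized Leibniz rule $D_q[f]D_q[g]=D_q[f]g+fD_q[g]-D_q[fg]$ of Proposition \ref{prop:MRBO}(ii). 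For the mixed rule (iv) I would either invoke the remark that it follows from (iii), or verify it directly: expanding $D_q[f\,P_q[g]]$ by the Leibniz rule and using $D_q\circ P_q=\Id$ collapses the right-hand side to $D_q[f]P_q[g]$, as required.

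Because $\shm$ and $\Psi$ are both defined on all of $\cA_{-1}$, the induction may temporarily leave the admissible words, but this causes no difficulty. The one point deserving care is that rule (iii) here genuinely decreases the induction parameter, unlike its classical counterpart where $|du|+|dv|=|u|+|d^2v|$ forced the iterated unfolding used in Lemma \ref{lem:idealT} and Proposition \ref{lem:Jalgebra}: in the reduced-word convention rule (iii) is only invoked when $du,dv$ are reduced (so the tails do not begin with $p$), and then each summand $u\shm dv$, $du\shm v$, $u\shm v$ has strictly smaller total length than $du\shm dv$. Hence no such unfolding is needed, and I expect the only remaining bookkeeping to be the reductions coming from $pd=dp=\be$ when a letter is prepended; since $\Psi$ already respects these relations, the multiplicativity identity passes to the quotient unchanged. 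Restricting $\Psi$ to $\langle Y_{-1}\rangle_\bQ$ and composing with evaluation at $t=q$ then yields that $\ofz_q^\shuffle$ is an algebra morphism.
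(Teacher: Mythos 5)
Your proof is correct and complete: all four defining rules of $\shm$ match the operator identities of Proposition \ref{prop:MRBO} exactly as you claim (rule (ii) is the weight $-1$ Rota--Baxter relation, rule (iii) the rearranged generalized Leibniz rule, rule (iv) collapses via $D_q\circ P_q=\Id$), the well-definedness of $\Psi$ modulo $pd=dp=\be$ follows from Proposition \ref{prop:MRBO}(iii), and your observation that rule (iii) strictly decreases the total length --- so that, unlike the classical $\sho$ case, no iterated unfolding is needed --- makes the induction genuinely well-founded. The paper itself gives no proof of this lemma (it is cited from \cite{Castillo13b}), but your argument is precisely the $q$-analogue of the paper's own proof of the classical statement, Lemma \ref{lem:characterJ}, which reduces multiplicativity of the word-to-function map to the Rota--Baxter, derivation, and mutual-inverse identities of the underlying operators; so your proposal follows essentially the same approach, merely fleshed out with the details the paper delegates to the reference.
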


%%%%%%%%%%%%%%%%%%%%%%%%%%%
%%%%%%%%%%%%%%%%%%%%%%%%%%%

\subsection{General Word Algebraic Part}
\label{ssect:genalg}

In this section we explore the algebraic structure that is related to non-positive arguments for MZVs and  $q$MZVs. For this reason we introduce a parameter $\lambda \in \bQ$. The case $\lambda=0$ corresponds to MZVs and the case $\lambda=-1$ to (modified) $q$MZVs.\\
 
Let $L:=\{d,y\}$ be an alphabet of two letters. The free monoid of $L$ with empty word $\be$ is denoted by $L^\ast$. We denote the free algebra of $L$ by $\bQ\langle L\rangle$ and define the subspace of words ending in $d$ by 
\begin{align*}
	\cT_-:=\cT\cap \bQ\langle L\rangle=\langle \left\{wd\colon  w\in L^\ast \right\}  \rangle_\bQ\subseteq \bQ\langle L\rangle, 
\end{align*}
with $\cT$ defined in Lemma \ref{lem:idealT}. 
The set of \emph{admissible words} is defined as 
\begin{align*}
	Y:=L^\ast y\cup \{\be\},
\end{align*}
and the $\bQ$-vector space spanned by $Y$ is notated as $\cH:=\langle Y \rangle_\bQ$. It is isomorphic to the quotient $\bQ\langle L \rangle / \cT_-.$ The \emph{weight} $\wt(w)$ of a word $w \in Y$ is given by the number of letters of $w$, and we use the convention $\wt(\be):=0$.
Furthermore, the \emph{depth} $\dpt(w)$ of a word $w \in Y$ is given by the number of $y$ in $w$. The $\bQ$-vector space $\cH$ is graded by depth, i.e.,
\begin{align*}
\cH = \bigoplus_{n\geq 0} \cH_{(n)}
\end{align*}
with $\cH_{(n)}:=\langle w\in Y\colon \dpt(w)=n \rangle_\bQ$.

%%%%%%%%%%%%%%%%%%%%%%%%%%%
%%%%%%%%%%%%%%%%%%%%%%%%%%%

\subsubsection{The algebra $\cH_\lambda,\,\lambda \neq 0$}
\label{ssect:wordbialgebra}

Let $\lambda \in \bQ\setminus \{0\}$. We define the product
\begin{align*}
	\shl \colon \bQ\langle L \rangle \otimes \bQ\langle L \rangle \to \bQ\langle L \rangle
\end{align*}
iteratively by 
\allowdisplaybreaks{ 
\begin{enumerate}[(P1)]
 \item $\be \shl w := w \shl \be := w$ for any $w\in L^\ast$;
 \item $yu\shl v := u \shl yv := y(u\shl v)$ for any $u,v\in L^\ast$;
 \item $du\shl dv:=\frac 1\lambda \big[d(u\shl v)-du\shl v-u \shl dv\big]$ for any $u,v\in L^\ast$.
\end{enumerate}}

% \begin{remark} 
% The condition $u\neq \be$ or $v\neq \be$ in (P3) is necessary for the case $\lambda = 0$ in order to be consistent with (P1). 
% \end{remark}

\noindent Furthermore, we define the unit map $\eta \colon \bQ \to \bQ\langle L \rangle$, $1 \mapsto \be$. 

\begin{proposition}
\label{prop:algebraH}
For $\lambda \in \bQ$, the triple $(\bQ\langle L \rangle, \shl,\eta)$ is a commutative, associative, and unital $\bQ$-algebra. The subspace $\cT_-$ is a two-sided ideal of $\bQ\langle L\rangle$.   
\end{proposition}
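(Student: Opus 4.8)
The plan is to verify well-definedness, unitality and commutativity by routine inductions on word length, and then to dispatch associativity---the only delicate point---by reducing the general weight $\lambda\neq0$ to the case $\lambda=-1$ already available in the paper. Reading (P1)--(P3) as a recursion on the sum of the lengths of the two arguments, every right-hand side strictly decreases this sum (in (P3) the products $u\shl v$, $du\shl v$ and $u\shl dv$ are all shorter than $du\shl dv$), so the recursion terminates; the only clash of rules is when both arguments begin with $y$, and peeling the $y$ from the left or from the right both give $y^2(u\shl v)$, so $\shl$ is unambiguous. Unitality is precisely (P1). Commutativity $x\shl y=y\shl x$ follows by induction on total length: a leading $y$ is handled by (P2), and when both factors begin with $d$ one uses that (P3) is manifestly symmetric in its two arguments together with the inductive commutativity of the shorter products $u\shl v$, $du\shl v$, $u\shl dv$.

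For associativity I would induct on $|u|+|v|+|w|$. If any of the three words begins with $y$, (P2) pulls that $y$ to the front and the inductive hypothesis closes the case at once. The genuine obstacle is $u=du_1$, $v=dv_1$, $w=dw_1$, where all three factors begin with $d$: expanding either bracketing by (P3) produces, after one step, a product $d(u_1\shl v_1)\shl dw_1$ whose first factor is again $d$-led, forcing one to iterate (P3) and then match a sizeable alternating sum on the two sides. Rather than carry out this bookkeeping I would remove it by rescaling. For $\mu\in\bQ\setminus\{0\}$ set $\phi_\mu(w):=\mu^{m}w$, where $m$ is the number of letters $d$ occurring in $w$, extended linearly; a short induction using (P2) and (P3) shows that $\phi_\mu$ intertwines $\shl$ with $\shuffle_{\mu\lambda}$, the factor $1/\lambda$ in (P3) being rescaled to exactly $1/(\mu\lambda)$. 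Since $\phi_\mu$ is a linear bijection, it is an algebra isomorphism from $(\bQ\langle L\rangle,\shl)$ to $(\bQ\langle L\rangle,\shuffle_{\mu\lambda})$; choosing $\mu=-1/\lambda$ reduces everything to $\lambda=-1$.

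It therefore remains to treat $\shm$. Here I would observe that the words avoiding the letter $p$ span a subalgebra of $(\cA_{-1},\shm)$: the product of two such words is computed using rules (i) and (iii) alone, neither of which creates a $p$, and rule (iii) for $\shm$ is literally (P3) at $\lambda=-1$. Hence $(\bQ\langle L\rangle,\shm)$ is a subalgebra of the commutative, associative, unital algebra $(\cA_{-1},\shm)$ of Lemma~\ref{lem:qalgebra}, so it too is commutative, associative and unital; transporting this back through $\phi_\mu$ yields the claim for every $\lambda\neq0$. For the last assertion, commutativity lets me treat $\cT_-$ as a one-sided ideal, and I would show $u\shl v\in\cT_-$ for $u\in\cT_-$ by induction on total length. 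Leading $y$'s reduce via (P2); in the case $u=du_1$, $v=dv_1$ the expansion (P3) has the one term $d(u_1\shl v_1)$ that threatens to leave $\cT_-$, but since $u$ ends in $d$ so does $u_1$, whence $u_1\shl v_1\in\cT_-$ by induction and the danger is gone. As indicated, the all-$d$ step of associativity is the true obstacle, and the rescaling isomorphism $\phi_\mu$ is precisely the device that sidesteps it.
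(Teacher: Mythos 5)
Your associativity argument is correct, and it is precisely the route that the paper's own proof mentions but declines to take: the paper proves associativity for general $\lambda\neq 0$ by expanding both $(da\shl db)\shl dc$ and $da\shl (db\shl dc)$ directly via (P3) and the induction hypothesis and matching the two sums, remarking only in passing that the result ``could be derived from the case $\lambda=-1$ by appropriate rescaling''. You execute that rescaling, and both of your ingredients check out: the map $\phi_\mu(w)=\mu^m w$ ($m$ the number of $d$'s in $w$) satisfies $\phi_\mu(dw)=\mu\, d\,\phi_\mu(w)$, and an induction through (P1)--(P3) indeed gives $\phi_\mu(u\shl v)=\phi_\mu(u)\,\shuffle_{\mu\lambda}\,\phi_\mu(v)$; moreover the $p$-free words of $W_{-1}$ span a subalgebra of $(\cA_{-1},\shm)$, since rules (i) and (iii) neither create a $p$ nor invoke rules (ii) or (iv) on $p$-free inputs, and (iii) is (P3) at $\lambda=-1$. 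Choosing $\mu=-1/\lambda$ then transports commutativity and associativity from Lemma \ref{lem:qalgebra}. What you gain is the elimination of the paper's double-expansion bookkeeping; what you give up is self-containedness, since Lemma \ref{lem:qalgebra} is itself only justified in the paper by reference to \cite{Castillo13b}, whereas the paper's computation stands on its own. (Like the paper, you effectively treat only $\lambda\neq 0$, which is all (P3) permits.)

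There is, however, a concrete flaw in your argument that $\cT_-$ is an ideal. In the case $u=du_1$, $v=dv_1$ with $u\in\cT_-$, your justification rests on ``since $u$ ends in $d$ so does $u_1$'', which is false exactly when $u=d$, i.e. $u_1=\be$; moreover the term $u_1\shl dv_1$, and not only $d(u_1\shl v_1)$, requires that same justification, since neither of its factors need end in $d$. The edge case is not vacuous, and the individual terms genuinely leave $\cT_-$: for $u=d$ and $v=dv_1$ one has $d(u_1\shl v_1)=dv_1$ and $u_1\shl dv_1=dv_1$, which in general do not end in $d$ (take $v_1=y$). The argument is rescued by the fact that these two offending terms enter (P3) with opposite signs and cancel, leaving $d\shl dv_1=-\frac{1}{\lambda}\,(d\shl v_1)$, which lies in $\cT_-$ by the induction hypothesis applied to the pair $(d,v_1)$. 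A concrete instance: $d\shl dy=-\frac{1}{\lambda}\,yd\in\cT_-$, even though the intermediate terms $dy$ in the expansion do not lie in $\cT_-$. So you should treat $u_1=\be$ as a separate case via this cancellation; as written, the induction step fails at exactly this point, although the statement itself, and the rest of your proof, are unaffected.
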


\begin{proof}
In the case $\lambda=-1$ the proof is a consequence of Lemma \ref{lem:qalgebra}. We give a proof for any $\lambda\neq 0$ for completeness, although it could be derived from the case $\lambda=-1$ by appropriate rescaling. Commutativity is clear from the definition. We only have to verify associativity if all words begin with a letter $d$. We apply induction on the sum of the lengths of the words. The base case is trivial. For the inductive step we observe for $a,b,c \in L^\ast$ that 
\allowdisplaybreaks{
 \begin{align*}
  (da\shl db)\shl dc 
  =&~\frac{1}{\lambda} \left[d(a\shl b)-da\shl b - a\shl db \right]\shl dc\\
  =&~\frac{1}{\lambda^2} \left[ d((a\shl b)\shl c) - d(a\shl b)\shl c - (a\shl b)\shl dc \right] \\
  & - \frac{1}{\lambda} \left[ (da\shl b)\shl dc + (a\shl db)\shl dc\right] \\
  =&~\frac{1}{\lambda^2} \left[d(a\shl b\shl c)-a\shl b\shl dc -da\shl b\shl c - a\shl db \shl c \right]\\
  & -\frac{1}{\lambda} \left[ da\shl db\shl c + da\shl b\shl dc + a\shl d b \shl dc \right] \\
  \end{align*}}
  and 
  \allowdisplaybreaks{
  \begin{align*}
  da\shl (db\shl dc)
  =&~\frac{1}{\lambda} da \shl \left[ d(b\shl c) -db\shl c-b\shl dc \right]  \\
  =&~\frac{1}{\lambda^2} \left[ d(a\shl(b\shl c)) - da\shl(b\shl c) - a\shl d(b\shl c) \right] \\
  & - \frac{1}{\lambda}\left[ da\shl(db\shl c) + da\shl(b\shl dc) \right] \\
  =&~\frac{1}{\lambda^2} \left[ d(a\shl b\shl c)-da\shl b\shl c - a\shl db\shl c - a\shl b\shl dc \right] \\
  &-\frac{1}{\lambda}\left[ a\shl db\shl dc + da\shl db\shl c + da\shl b\shl dc \right], 
 \end{align*}}
which shows associativity. From definition we obtain that $\shl (\cT_- \otimes \bQ\langle L \rangle) = \shl (\bQ\langle L \rangle\ \otimes \cT_-) \subseteq \cT_-$ and therefore $\cT_-$ is a two-sided ideal of $\bQ\langle L \rangle$. 
\end{proof} 

%%%%%%%%%%%%%%%%%%%%%%%%%%%
%%%%%%%%%%%%%%%%%%%%%%%%%%%

\subsubsection{The coproduct $\overline\Delta_\lambda,\,\lambda\in\bQ$}

\noindent Now we define the coproduct
\begin{align*}
 \odl \colon \bQ\langle L \rangle \to \bQ\langle L \rangle \otimes \bQ\langle L \rangle
\end{align*}
by 
\allowdisplaybreaks{
\begin{enumerate}[(C1)]
 \item $\odl(y):= \be\otimes y + y\otimes \be$, 
 \item $\odl(d):= \be\otimes d + d\otimes \be+ \lambda d\otimes d$, 
\end{enumerate}}
\noindent which extends uniquely to an algebra morphism (with respect to~concatenation) on the free algebra $\bQ\langle L \rangle$. The counit map $\varepsilon\colon \bQ\langle L\rangle \to \bQ$ is given by $\varepsilon(\be) = 1$ and $\varepsilon(w)=0$ for any word $w\in L^*\setminus \{\be\}$. 
\begin{example}\label{ex:ocoproduct} We have
\allowdisplaybreaks{
\begin{align*}
  	\overline\Delta_\lambda(dy) = &~ \be \otimes dy + dy \otimes \be +d\otimes y +y\otimes d + \lambda dy\otimes d + \lambda d \otimes dy. 
 \end{align*}}
\end{example}
\begin{proposition}\label{prop:coalgebraH}
For $\lambda \in \bQ$ the triple $(\bQ\langle L \rangle,\odl,\varepsilon)$ is a cocommmutative and counital coalgebra, and $\cT_-$ is a coideal of $\bQ\langle L \rangle$. 
\end{proposition}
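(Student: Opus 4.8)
The plan is to exploit the fact that $\odl$ is, by construction, the unique algebra morphism (for the concatenation product) extending its prescribed values on the two generators $d,y$. All the structural identities to be checked---coassociativity, cocommutativity and the counit axioms---are equalities between algebra morphisms out of the free algebra $\bQ\langle L\rangle$, so by the universal property of a free algebra each such equality holds as soon as it holds on $d$ and $y$. This reduces the entire verification to a finite generator-level computation, with no induction on word length.

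Concretely, I would first record the key observation that $y$ is primitive, $\odl(y)=\be\otimes y+y\otimes\be$, while the element $\be+\lambda d$ is group-like:
\[
\odl(\be+\lambda d)=(\be+\lambda d)\otimes(\be+\lambda d),
\]
as one checks by expanding (C2) and using $\odl(\be)=\be\otimes\be$. (For $\lambda=0$ this degenerates to $d$ itself being primitive.) Since $(\odl\otimes\Id)\circ\odl$ and $(\Id\otimes\odl)\circ\odl$ are both algebra morphisms $\bQ\langle L\rangle\to\bQ\langle L\rangle^{\otimes 3}$, the target carrying the componentwise concatenation product, coassociativity follows at once: a primitive element and a group-like element are automatically coassociative, so the two composites agree on $d$ and $y$, hence everywhere. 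Cocommutativity is handled identically, with $\tau$ the flip: $\tau\circ\odl$ is again an algebra morphism, and both $\odl(y)$ and $\odl(d)$ are manifestly $\tau$-symmetric, so $\tau\circ\odl=\odl$ on generators and thus globally. The counit axioms reduce the same way: $(\varepsilon\otimes\Id)\circ\odl$ and $\Id$ are algebra endomorphisms of $\bQ\langle L\rangle$ (after the canonical identification $\bQ\otimes\bQ\langle L\rangle\cong\bQ\langle L\rangle$) which agree on $d,y$, since $(\varepsilon\otimes\Id)\odl(g)=g$ for $g\in\{d,y\}$ is read off directly from (C1),(C2); likewise on the right.

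For the coideal claim I would first note $\varepsilon(\cT_-)=0$, since every word of $\cT_-$ ends in $d$ and is therefore nonempty. For the comultiplication I would use that $\cT_-=\bQ\langle L\rangle\, d$ is spanned by words $wd$ and that $\odl$ is multiplicative: writing $\odl(w)=\sum w_{(1)}\otimes w_{(2)}$,
\[
\odl(wd)=\odl(w)\odl(d)=\sum w_{(1)}\otimes w_{(2)}d+\sum w_{(1)}d\otimes w_{(2)}+\lambda\sum w_{(1)}d\otimes w_{(2)}d.
\]
Each summand lies in $\cT_-\otimes\bQ\langle L\rangle+\bQ\langle L\rangle\otimes\cT_-$: the first term has its right leg ending in $d$, while the second and third have their left leg ending in $d$. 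Hence $\odl(\cT_-)\subseteq\cT_-\otimes\bQ\langle L\rangle+\bQ\langle L\rangle\otimes\cT_-$, so $\cT_-$ is a coideal.

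The only point requiring genuine care---the ``main obstacle,'' though it is more bookkeeping than difficulty---is the legitimacy of the reduction to generators: one must confirm that each of $(\odl\otimes\Id)\circ\odl$, $(\Id\otimes\odl)\circ\odl$, $\tau\circ\odl$ and $(\varepsilon\otimes\Id)\circ\odl$ really is an algebra morphism for the relevant concatenation products, so that agreement on $d,y$ forces global agreement. Once this is in place, the group-like/primitive description of the two generators makes every identity transparent.
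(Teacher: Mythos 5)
Your proposal is correct, and its overall skeleton coincides with the paper's: both proofs rest on the fact that $\odl$ is by construction an algebra morphism for concatenation, so every identity (coassociativity, cocommutativity, counit) only needs to be checked on the generators $d$ and $y$, and both then verify the coideal property by multiplicativity of $\odl$ applied to words $wd$. Where you genuinely diverge is in how the generator-level check for coassociativity is carried out. The paper does it by brute force: it expands $(\Id\otimes\odl)\odl(d)$ and $(\odl\otimes\Id)\odl(d)$ into their seven terms each and observes they agree. You instead notice that $y$ is primitive and that $\be+\lambda d$ is group-like,
\[
\odl(\be+\lambda d)=(\be+\lambda d)\otimes(\be+\lambda d),
\]
so coassociativity on $\be+\lambda d$ (hence, by linearity, on $d$ when $\lambda\neq 0$, with the primitive case covering $\lambda=0$) is automatic. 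This buys brevity and a conceptual explanation of \emph{why} (C2) is coassociative -- the coproduct is generated by one primitive and one group-like element -- and it would also make the existence of the antipode on the quotient Hopf algebra transparent ($\be+\lambda d$ is invertible-grouplike in a completed sense, $y$ primitive). The paper's explicit expansion, by contrast, is entirely self-contained and requires no auxiliary notions. Your coideal argument, spelling out
\[
\odl(wd)=\sum w_{(1)}\otimes w_{(2)}d+\sum w_{(1)}d\otimes w_{(2)}+\lambda\sum w_{(1)}d\otimes w_{(2)}d,
\]
is exactly the computation the paper compresses into ``we immediately obtain,'' and your insistence on verifying that the relevant composites are algebra morphisms is the right point of care -- the paper leaves this implicit in the phrase ``extends uniquely to an algebra morphism.''
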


\begin{proof}
Cocommutativity is clear by definitions (C1) and (C2). The counit axiom is not hard to verify. Finally we have to check coassociativity. We have
\allowdisplaybreaks{
\begin{align*}
 \lefteqn{(\Id \otimes \odl) \odl(d) 
 = (\Id \otimes \odl) (\be \otimes d + d\otimes \be +\lambda d\otimes d)} \\
 = &~\be \otimes \be \otimes d  + \be \otimes d \otimes \be + \lambda \be \otimes d \otimes d + d\otimes \be \otimes \be 
 +\lambda d \otimes \be \otimes d + \lambda d \otimes d \otimes \be + \lambda^2 d \otimes d \otimes d
\end{align*}}
and 
\allowdisplaybreaks{
\begin{align*}
 \lefteqn{ (\odl \otimes \Id) \odl(d) 
 = (\odl \otimes \Id) (\be \otimes d + d\otimes \be +\lambda d\otimes d)} \\
 = &~\be \otimes \be \otimes d  + \be \otimes d \otimes \be + d \otimes \be \otimes \be +\lambda d\otimes d \otimes \be
   +\lambda \be \otimes d \otimes d + \lambda d \otimes \be \otimes d + \lambda^2 d \otimes d \otimes d. 
\end{align*}}
The case $ (\Id \otimes \odl) \odl(y) =(\odl \otimes \Id) \odl(y)$ is easy to see. We immediately obtain 
\begin{align*}
 \odl(\cT_-) \subseteq \cT_- \otimes \bQ\langle L \rangle + \bQ\langle L \rangle \otimes \cT_-, 
\end{align*}
which concludes the proof. 
\end{proof}

%%%%%%%%%%%%%%%%%%%%%%%%%%%
%%%%%%%%%%%%%%%%%%%%%%%%%%%

\subsubsection{Compatibility properties of the coproduct ($\lambda\neq 0$ case)}

\begin{lemma}
\label{lem:proco}
 For words $u,v\in L^\ast$ we have
 \begin{align}\label{eq:deltay}
  \odl(y)[\odl(u)\shl\odl(v)]= \odl(yu) \shl \odl(v) = \odl(u) \shl \odl(yv)
 \end{align}
 and 
 \allowdisplaybreaks{
 \begin{align}
 \label{eq:deltad}
 \begin{split}
    \odl(d)[\odl(u)\shl \odl(v)] =~
  & \odl(du) \shl \odl(v) + \odl(u)\shl\odl(dv)\\
  &+\lambda [\odl(du)\shl \odl(dv)]. 
 \end{split}
  \end{align}}
\end{lemma}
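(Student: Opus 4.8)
The plan is to recast both identities as statements about left-concatenation operators on $(\bQ\langle L\rangle,\shl)$, exploiting that $\odl$ is an algebra morphism for concatenation, so that $\odl(du)=\odl(d)\,\odl(u)$ and $\odl(yu)=\odl(y)\,\odl(u)$, the products on the right being concatenation in the tensor square. Writing $\ell_a(w):=aw$ for $a\in\{d,y\}$, one checks directly from $\odl(d)=\be\otimes d+d\otimes\be+\lambda\,d\otimes d$ that left concatenation by $\odl(d)$ on $\bQ\langle L\rangle\otimes\bQ\langle L\rangle$ is exactly the operator $\tilde\ell_d:=\ell_d\otimes\Id+\Id\otimes\ell_d+\lambda\,\ell_d\otimes\ell_d$, and likewise that concatenation by $\odl(y)$ is $\tilde\ell_y:=\ell_y\otimes\Id+\Id\otimes\ell_y$. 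Thus \eqref{eq:deltad} is precisely the twisted Leibniz rule $\tilde\ell_d(A\shl B)=\tilde\ell_d(A)\shl B+A\shl\tilde\ell_d(B)+\lambda\,\tilde\ell_d(A)\shl\tilde\ell_d(B)$ evaluated at $A=\odl(u)$, $B=\odl(v)$, using $\tilde\ell_d\,\odl(u)=\odl(d)\,\odl(u)=\odl(du)$, and \eqref{eq:deltay} is the analogous degenerate statement for $\tilde\ell_y$.

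The key observation is that rule (P3), rewritten as $d(u\shl v)=du\shl v+u\shl dv+\lambda(du\shl dv)$, says exactly that $\ell_d$ is a $\lambda$-twisted derivation of $(\bQ\langle L\rangle,\shl)$. For $\lambda\neq 0$ this is equivalent to $\phi:=\Id+\lambda\,\ell_d$ being multiplicative for $\shl$, since $\phi(a\shl b)-\phi(a)\shl\phi(b)=\lambda\big(\ell_d(a\shl b)-\ell_d a\shl b-a\shl\ell_d b-\lambda\,\ell_d a\shl\ell_d b\big)$, and the bracket vanishes by the rearranged (P3). A tensor product of multiplicative maps is multiplicative for the componentwise product $\shl$ on the tensor square, so $\phi\otimes\phi$ is multiplicative; expanding gives $\phi\otimes\phi=\Id\otimes\Id+\lambda\,\tilde\ell_d$, and running the same equivalence backwards shows that $\tilde\ell_d=\tfrac1\lambda(\phi\otimes\phi-\Id\otimes\Id)$ is itself a $\lambda$-twisted derivation. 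Specializing its twisted Leibniz rule to $A=\odl(u)$, $B=\odl(v)$ yields \eqref{eq:deltad} at once, bypassing the $3\times 3$ term-by-term expansion that a direct Sweedler-notation computation would demand.

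For \eqref{eq:deltay} the same mechanism applies, but rule (P2) makes it simpler: $\ell_y(a\shl b)=\ell_y(a)\shl b=a\shl\ell_y(b)$, so $\ell_y$ is simultaneously a left and a right derivation. This property is inherited componentwise by $\tilde\ell_y=\ell_y\otimes\Id+\Id\otimes\ell_y$ on the tensor square, giving $\tilde\ell_y(A\shl B)=\tilde\ell_y(A)\shl B=A\shl\tilde\ell_y(B)$; evaluating at $A=\odl(u)$, $B=\odl(v)$ and using $\tilde\ell_y\,\odl(u)=\odl(yu)$ produces the two equalities in \eqref{eq:deltay}.

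I expect the only delicate point to be the bookkeeping that identifies left concatenation by $\odl(d)$ with the operator $\tilde\ell_d$ and confirms the equivalence between the twisted Leibniz rule and multiplicativity of $\Id+\lambda\,\ell_d$; once these are established the tensor-product step is purely formal. As a sanity check one can test both identities on low-degree generators (e.g.\ $u=v=\be$, or $u=d$, $v=\be$) against the value of $\overline\Delta_\lambda(dy)$ recorded in Example \ref{ex:ocoproduct}.
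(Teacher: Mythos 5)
Your proof is correct, and it takes a genuinely different route from the paper's. The paper argues by direct computation in Sweedler's notation: writing $\odl(u)\shl\odl(v)=\sum_{(u),(v)}(u_1\shl v_1)\otimes(u_2\shl v_2)$, it applies concatenation by $\odl(y)$, resp.\ $\odl(d)$, componentwise, and then uses (P2), resp.\ the rearranged (P3), to expand each factor such as $d(u_1\shl v_1)$ and regroup all the resulting terms into $\odl(du)\shl\odl(v)+\odl(u)\shl\odl(dv)+\lambda\,[\odl(du)\shl\odl(dv)]$ --- precisely the term-by-term bookkeeping you bypass. Your argument replaces that regrouping by the observation that, for $\lambda\neq0$, the twisted Leibniz rule (P3) for $\ell_d$ is equivalent to multiplicativity of $\phi=\Id+\lambda\ell_d$, that multiplicativity passes formally to $\phi\otimes\phi$ on the tensor square with componentwise $\shl$, and that $\phi\otimes\phi=\Id\otimes\Id+\lambda\tilde\ell_d$ lets you run the equivalence backwards; equivalently, your $\phi$-trick expresses that $\be+\lambda d$ is group-like, since $\odl(\be+\lambda d)=(\be+\lambda d)\otimes(\be+\lambda d)$. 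As to what each approach buys: the paper's computation is elementary, self-contained, and handles \eqref{eq:deltay} and \eqref{eq:deltad} by one uniform scheme; yours is conceptually cleaner, scales with no extra work to any number of tensor factors, and makes transparent why the case $\lambda=0$ is genuinely different (there $\ell_d$ becomes an honest derivation, $\phi$ degenerates to $\Id$, and indeed the paper treats $\lambda=0$ separately in Proposition \ref{prop:comp-delta-zero}). The two points you flag as delicate --- identifying left concatenation by $\odl(d)$ on the tensor square with $\tilde\ell_d$, and the twisted-derivation/multiplicativity equivalence --- are routine bilinear verifications requiring neither associativity nor commutativity of $\shl$, so your proof is complete as written.
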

 
\begin{proof}
Using (P2) and Sweedler's notation, $ \odl(u)=\sum_{(u)} u_1 \otimes u_2$, we obtain for the first equality of \eqref{eq:deltay} 
\allowdisplaybreaks{
 \begin{align*}
  \odl(y)\left[\odl(u)\shl\odl(v)\right] 
  & =  \odl(y)\left[\sum_{(u),(v)}(u_1\shl v_1)\otimes (u_2 \shl v_2) \right] \\
  & = \sum_{(u),(v)}\left[(u_1\shl v_1)\otimes y(u_2 \shl v_2)+y(u_1\shl v_1)\otimes (u_2 \shl v_2) \right] \\
  & = \sum_{(u),(v)}\left[(u_1\shl v_1)\otimes (yu_2 \shl v_2)+(yu_1\shl v_1)\otimes (u_2 \shl v_2) \right] \\
  & = \odl(yu)\shl \odl(v).
 \end{align*}}
 The second equality follows completly analogously. For \eqref{eq:deltad} we observe
 {\allowdisplaybreaks
 \begin{align*}
    & \odl(d)[\odl(u)\shl \odl(v)] \\
  =~& \odl(d)\left[ \sum_{(u),(v)}(u_1\shl v_1)\otimes (u_2 \shl v_2)\right]\\
  =~& \sum_{(u),(v)}\left[ d(u_1\shl v_1)\otimes (u_2 \shl v_2) + (u_1\shl v_1)\otimes d(u_2 \shl v_2)+\lambda d(u_1\shl v_1)\otimes d(u_2 \shl v_2)\right]\\
  =~& \sum_{(u),(v)}\left[ (\be\otimes d + d\otimes \be +\lambda d\otimes d)(u_1\otimes u_2)\shl(v_1\otimes v_2) \right]\\
    &+\sum_{(u),(v)}\left[ (u_1\otimes u_2)\shl(\be\otimes d + d\otimes \be +\lambda d\otimes d)(v_1\otimes v_2) \right]\\
    &+\lambda \sum_{(u),(v)}\left[ (\be\otimes d + d\otimes \be +\lambda d\otimes d)(u_1\otimes u_2)\shl(\be\otimes d + d\otimes \be +\lambda d\otimes d)(v_1\otimes v_2) \right]\\
  =~& \odl(du) \shl \odl(v) + \odl(u)\shl \odl(dv) +\lambda \left[\odl(du)\shl \odl(dv)\right],
 \end{align*}}
which yields the claim.
 \end{proof}

 %%%%%%%%%%%%%%%%%%%%%%%%%%%
%%%%%%%%%%%%%%%%%%%%%%%%%%%

\subsubsection{The Hopf algebra $\cH_\lambda,\,\lambda\neq 0$}

\begin{theorem}\label{theo:HopfQuot}
The  quintuple $\cH_\lambda=(\cH,\shl,\eta,\dl,\varepsilon)$ is a Hopf algebra with 
\begin{align*}
	\dl(w)&:= \odl(w) \mod ( \cT_-\otimes \bQ\langle L \rangle + \bQ\langle L \rangle \otimes \cT_-)
\end{align*}
for any word $w\in Y$, where $\cH=\bQ\langle L \rangle/\cT_-$ is always identified with $\langle Y \rangle_{\bQ}$.
\end{theorem}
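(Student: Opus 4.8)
The plan is to assemble the Hopf algebra from the structures already established and to isolate the one genuinely new verification, namely that the coproduct is multiplicative for the shuffle product. First I would record that $\cT_-$ is simultaneously a two-sided ideal of $(\bQ\langle L\rangle,\shl)$ (Proposition \ref{prop:algebraH}) and a coideal of $(\bQ\langle L\rangle,\odl)$ (Proposition \ref{prop:coalgebraH}). Consequently the quotient $\cH=\bQ\langle L\rangle/\cT_-$, identified with $\langle Y\rangle_\bQ$, inherits a commutative associative unital product $\shl$ with unit $\eta$, while the map $\dl:=\pi\circ\odl$, with $\pi$ the projection onto $\cH\otimes\cH\cong(\bQ\langle L\rangle\otimes\bQ\langle L\rangle)/(\cT_-\otimes\bQ\langle L\rangle+\bQ\langle L\rangle\otimes\cT_-)$, is well defined precisely because of the coideal inclusion $\odl(\cT_-)\subseteq \cT_-\otimes\bQ\langle L\rangle+\bQ\langle L\rangle\otimes\cT_-$; it is cocommutative and counital for $\varepsilon$. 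It then remains to prove the bialgebra compatibility, to establish connectedness, and to produce an antipode.

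The heart of the argument is to show that $\odl$ is an algebra morphism from $(\bQ\langle L\rangle,\shl)$ to $(\bQ\langle L\rangle\otimes\bQ\langle L\rangle,\shl)$, i.e. that $\odl(u\shl v)=\odl(u)\shl\odl(v)$ for all words $u,v\in L^\ast$. I would prove this by induction on $|u|+|v|$, mirroring the recursive definition of $\shl$. The base case (one word empty) is immediate from $\odl(\be)=\be\otimes\be$. If one of the words begins with $y$, say $u=yu'$, then (P2) gives $u\shl v=y(u'\shl v)$, and using that $\odl$ is a morphism for concatenation together with the induction hypothesis rewrites the image as $\odl(y)\bigl[\odl(u')\shl\odl(v)\bigr]$, which equals $\odl(yu')\shl\odl(v)=\odl(u)\shl\odl(v)$ by the first identity of Lemma \ref{lem:proco}. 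If instead both words begin with $d$, say $u=du'$ and $v=dv'$, I would solve the second identity of Lemma \ref{lem:proco} for the term $\odl(du')\shl\odl(dv')$, obtaining $\tfrac1\lambda\bigl[\odl(d)[\odl(u')\shl\odl(v')]-\odl(du')\shl\odl(v')-\odl(u')\shl\odl(dv')\bigr]$; the three expressions on the right involve the pairs $(u',v')$, $(du',v')$, $(u',dv')$, all of strictly smaller total length, so the induction hypothesis and the concatenation-morphism property identify this with $\odl\bigl(\tfrac1\lambda[d(u'\shl v')-du'\shl v'-u'\shl dv']\bigr)=\odl(du'\shl dv')$ via (P3). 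Projecting to $\cH\otimes\cH$ yields $\dl(\overline u\shl\overline v)=\dl(\overline u)\shl\dl(\overline v)$; combined with the elementary facts that $\varepsilon$ is multiplicative and that $\dl$ and $\varepsilon$ respect the unit, this makes $\cH_\lambda$ a bialgebra.

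To upgrade the bialgebra to a Hopf algebra I would exploit the depth grading $\cH=\bigoplus_{n\ge0}\cH_{(n)}$ already introduced. The shuffle rules (P2) and (P3) add the depths of their arguments, and the coproduct rules (C1) and (C2) preserve total depth, so both structure maps are graded; moreover the only admissible word of depth $0$ is $\be$, whence $\cH_{(0)}=\bQ\be$ and $\cH_\lambda$ is connected graded. I would then invoke the standard fact that every connected graded bialgebra admits a unique antipode, constructed recursively degree by degree, which completes the proof.

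The step I expect to be the main obstacle, indeed the only place where something genuinely must be checked rather than quoted, is the $d$--$d$ case of the inductive compatibility: one must set up the induction on total length so that the pairs appearing on the right-hand side of Lemma \ref{lem:proco} are strictly smaller, and keep careful track of the factor $\tfrac1\lambda$ when matching the lemma's identity against the defining rule (P3). Everything else is bookkeeping, since Lemma \ref{lem:proco} has already packaged the essential interaction between $\odl$ and $\shl$, and connectedness follows at once from the depth grading.
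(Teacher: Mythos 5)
Your proposal is correct and follows essentially the same route as the paper: quotient by the ideal/coideal $\cT_-$, prove bialgebra compatibility by induction on total length using Lemma \ref{lem:proco} with exactly the same case split (one word starting with $y$; both starting with $d$, resolved via (P3)), then obtain the antipode from connectedness of the depth grading. The only deviation is that you establish $\odl(u\shl v)=\odl(u)\shl\odl(v)$ as an exact identity on $\bQ\langle L\rangle$ rather than modulo $\cT_-\otimes\bQ\langle L\rangle+\bQ\langle L\rangle\otimes\cT_-$ as the paper does; this is legitimate (Lemma \ref{lem:proco} and (P3) are exact for $\lambda\neq0$) and in fact slightly stronger, but it is the same argument.
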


\begin{proof}
On the one hand $\cH$ is the quotient by a two-sided ideal $\cT_-$ and therefore $(\cH,\shl,\eta)$ is an algebra. On the other hand $\cT_-$ is a coideal. Hence, $(\cH,\dl,\varepsilon)$ is a coalgebra. Since $\cH$ is connected it suffices to prove that $(\cH,\shl,\eta,\dl,\varepsilon)$ is a bialgebra. 
We show that
$$
	\odl(u'\shl v') = \odl(u')\shl \odl(v') \mod ( \cT_-\otimes \bQ\langle L \rangle + \bQ\langle L \rangle \otimes \cT_-)
$$
by induction on the sum of weights $\wt(u)+\wt(v)$, of the words $u,v\in Y$. The base cases are straightforward. \\
\textbullet~1st case: $u'=yu$ or $v'=yv$. We have with Lemma \ref{lem:proco}
\begin{align*}
	\odl(yu\shl v') 	& = \odl(y(u\shl v')) = \odl(y) \odl(u\shl v') \\
  				& = \odl(y) (\odl(u)\shl \odl(v)) \mod ( \cT_-\otimes \bQ\langle L \rangle + \bQ\langle L \rangle \otimes \cT_-) \\
  				& = \odl(yu)\shl\odl(v) \mod ( \cT_-\otimes \bQ\langle L \rangle + \bQ\langle L \rangle \otimes \cT_-).
\end{align*}
\textbullet~2nd case: $u'=du$ and $v'=dv$. We have with Lemma \ref{lem:proco} and the induction hypothesis
\allowdisplaybreaks{
 \begin{align*}
  & \odl(du\shl dv)  = \frac{1}{\lambda}\left[\odl\left(d(u\shl v)- du\shl v - u \shl dv \right)\right] \\
  & = \frac{1}{\lambda}\left[\odl(d)(\odl(u)\shl \odl(v)) -\odl(du)\shl \odl(v)\right. \\
  & \left.-\odl(u)\shl \odl(dv) \right] \mod ( \cT_-\otimes \bQ\langle L \rangle + \bQ\langle L \rangle \otimes \cT_-)\\
  & = \frac{1}{\lambda}\left[\odl(du)\shl \odl(v) + \odl(u)\shl\odl(dv) + \lambda (\odl(du)\shl \odl(dv))\right. \\
   &\left. -\odl(du)\shl \odl(v)-\odl(u)\shl \odl(dv) \right] \mod ( \cT_-\otimes \bQ\langle L \rangle + \bQ\langle L \rangle \otimes \cT_-)\\
  & = \odl(du)\shl \odl(dv) \mod ( \cT_-\otimes \bQ\langle L \rangle + \bQ\langle L \rangle \otimes \cT_-),
 \end{align*}}
 \ignore{
$\diamond$ For $\lambda=0$ let $u'=d^nu$ and $v'=d^mv$ with $u,v\in Y$ beginning in $y$. Then we have with Lemma \ref{lem:productexp} and  Lemma \ref{lem:proco}
\allowdisplaybreaks{
 \begin{align*}
   & \odl(d^n u\sho  d^m v) \\
 =~& \frac{n}{n+m}\left\{ \sum_{l=n}^{n+m-1}(-1)^{n+l}\od_0(d)\od_0(d^lu\sho  d^{n+m-l-1}v) + (-1)^m \od_0(d^{n+m}u\sho  v ) \right\}\\
   &+ \frac{m}{n+m}\left\{ \sum_{l=m}^{n+m-1}(-1)^{m+l}\od_0(d)\od_0(d^{n+m-l-1}u\sho  d^l v) + (-1)^n \od_0(u\sho  d^{n+m}v ) \right\} \\
 =~& \frac{n}{n+m}\left\{ \sum_{l=n}^{n+m-1}(-1)^{n+l}\od_0(d)(\od_0(d^lu)\sho  \od_0(d^{n+m-l-1}v)) + (-1)^m \od_0(d^{n+m}u\sho  v ) \right\}\\
   &+ \frac{m}{n+m}\left\{ \sum_{l=m}^{n+m-1}(-1)^{m+l}\od_0(d)(\od_0(d^{n+m-l-1}u)\sho  \od_0(d^l v)) + (-1)^n \od_0(u\sho  d^{n+m}v ) \right\} \\ 
 =~& \frac{n}{n+m}\left\{ \sum_{l=n}^{n+m-1}(-1)^{n+l}(\od_0(d^{l+1}u)\sho  \od_0(d^{n+m-l-1}v) + \od_0(d^lu)\sho  \od_0(d^{n+m-l}v))\right.\\
   & \left.+ (-1)^m \od_0(d^{n+m}u\sho  v ) \right\}\\
   &+ \frac{m}{n+m}\left\{ \sum_{l=m}^{n+m-1}(-1)^{m+l}(\od_0(d^{n+m-l}u)\sho  \od_0(d^l v)+\od_0(d^{n+m-l-1}u)\sho  \od_0(d^{l+1} v))\right.\\
   &+ \left.(-1)^n \od_0(u\sho  d^{n+m}v ) \right\} \\ 
 =~& \od_0(d^n u)\sho  \od_0(d^m v)\mod ( \cT_-\otimes \bQ\langle L \rangle + \bQ\langle L \rangle \otimes \cT_-),  
 \end{align*}}
 }
which concludes the proof. 
\end{proof}

 \begin{example}\label{ex:coproduct} For $n\in \bN$ we have $\Delta_\lambda(y^n)  = \sum_{l=0}^n \binom{n}{l}y^l\otimes y^{n-l};$, and
\allowdisplaybreaks{
\begin{align*}
	 \Delta_\lambda(d^ny) = &~ \be \otimes d^ny + d^ny \otimes \be; \\
  	\Delta_\lambda(yd^ny)= &~ \be \otimes yd^ny + y\otimes d^ny + d^ny\otimes y + yd^ny \otimes \be; \\
  	\Delta_\lambda(dyd^ny) = &~  \be \otimes dyd^ny + y\otimes d^{n+1}y +  d^ny\otimes dy + dy\otimes d^ny + d^{n+1}y\otimes y+ dyd^ny \otimes \be \\
  			& + \lambda dy\otimes d^{n+1}y +\lambda d^{n+1}y \otimes dy.\\
 \end{align*}}
\end{example}

%%%%%%%%%%%%%%%%%%%%%%%%%%%
%%%%%%%%%%%%%%%%%%%%%%%%%%%

\subsubsection{Compatibility between the product and the coproduct ($\lambda= 0$ case)}

Let us now focus on the case $\lambda=0$. Recall from Paragraph \ref{ssect:RBA} that $\cL$ is the two-sided ideal of the (noncommutative and nonassociative) algebra $(\cA_0,\sho)$ generated by the elements
\begin{align*}
 j^k\big(d(u\sho v)-du\sho v-u\sho dv\big),\,k\in\bZ,\, u,v\in W_0y.
\end{align*}
Now let $\cL_-$ be the two-sided ideal of the (noncommutative and nonassociative) subalgebra $(\bQ\langle L \rangle,\sho)$ generated by the elements
\begin{align*}
 d^k\big(d(u\sho v)-du\sho v-u\sho dv\big),\,k\in\bN_0,\, u,v\in L^\ast.
\end{align*}
Further let {$\cL_-^{(2)}:=\cL_-\otimes\bQ\langle L\rangle+\bQ\langle L\rangle\otimes\cL_-$. 

\begin{proposition}\label{prop:comp-delta-zero}
For any $u',v'\in L^\ast$ we have:
\begin{equation}
\overline{\Delta}_0(u'\sho v')=\overline{\Delta}_0(u')\sho \overline{\Delta}_0(v')\mod\cL_-^{(2)}.
\end{equation}
\end{proposition}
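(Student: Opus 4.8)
The plan is to prove the identity by induction, after first isolating two ``infinitesimal'' compatibilities that play the role of Lemma \ref{lem:proco} in the present ($\lambda=0$) setting. Writing $\overline{\Delta}_0(u)=\sum_{(u)}u_1\otimes u_2$ in Sweedler notation and using that $\overline{\Delta}_0$ is a concatenation morphism with $\overline{\Delta}_0(y)=\be\otimes y+y\otimes\be$ and $\overline{\Delta}_0(d)=\be\otimes d+d\otimes\be$, together with rule (i), a direct computation gives the \emph{exact} identity
\begin{equation*}
\overline{\Delta}_0(y)\big[\overline{\Delta}_0(u)\sho\overline{\Delta}_0(v)\big]=\overline{\Delta}_0(yu)\sho\overline{\Delta}_0(v)=\overline{\Delta}_0(u)\sho\overline{\Delta}_0(yv)
\end{equation*}
and the congruence
\begin{equation*}
\overline{\Delta}_0(d)\big[\overline{\Delta}_0(u)\sho\overline{\Delta}_0(v)\big]\equiv\overline{\Delta}_0(du)\sho\overline{\Delta}_0(v)+\overline{\Delta}_0(u)\sho\overline{\Delta}_0(dv)\pmod{\cL_-^{(2)}}.
\end{equation*}
In the second line the discrepancy is $\sum (u_1\sho v_1)\otimes\big(d(u_2\sho v_2)-du_2\sho v_2-u_2\sho dv_2\big)$ plus the symmetric term on the first tensor slot, whose brackets are exactly the $k=0$ generators of $\cL_-$; hence the discrepancy lies in $\cL_-^{(2)}$. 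Rearranging the second identity for the pair $(u,dv)$ yields the coproduct-level form of rule (iii), namely $\overline{\Delta}_0(du)\sho\overline{\Delta}_0(dv)\equiv\overline{\Delta}_0(d)[\overline{\Delta}_0(u)\sho\overline{\Delta}_0(dv)]-\overline{\Delta}_0(u)\sho\overline{\Delta}_0(d^2v)$ modulo $\cL_-^{(2)}$.

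Next I would record a stability lemma: the concatenation operators $\overline{\Delta}_0(y)$ and $\overline{\Delta}_0(d)$ preserve $\cL_-^{(2)}$. Since $\overline{\Delta}_0(y)(\ell\otimes w)=\ell\otimes yw+y\ell\otimes w$ and $\overline{\Delta}_0(d)(\ell\otimes w)=\ell\otimes dw+d\ell\otimes w$, this reduces to stability of $\cL_-$ under left concatenation by $y$ and by $d$. For $y$ this is immediate: rule (i) gives $y\sho x=yx$ for every $x$, so left concatenation by $y$ \emph{coincides} with $\sho$-multiplication by $y$, and $\cL_-$ is a $\sho$-ideal. The case of $d$ is the delicate one: here one uses that the generating set $\{d^k(d(u\sho v)-du\sho v-u\sho dv)\}$ is itself closed under left concatenation by $d$ (it raises $k$ to $k+1$), together with the partial Leibniz rule $d(a\sho c)=da\sho c+a\sho dc$, valid whenever $c$ starts with $d$, propagated through the ideal by induction on its generation. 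I would also note the clean consequence that $d^{j-1}(d(\be\sho\be)-d\be\sho\be-\be\sho d\be)=-d^{j}$ is a generator, so $d^{j}\in\cL_-$ for all $j\ge 1$ and therefore $\overline{\Delta}_0(d^k)=\sum_i\binom{k}{i}d^i\otimes d^{k-i}\in\cL_-^{(2)}$ for $k\ge 1$; since $\cL_-^{(2)}$ is a $\sho$-ideal, this forces both sides of the Proposition to vanish modulo $\cL_-^{(2)}$ whenever one argument is a pure power of $d$, consistently with $d^n\sho d^m w=0$.

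With these in hand I would prove the statement by induction on the pair $\mu(u',v')=\big(\wt(u')+\wt(v'),\,\text{number of leading }d\text{'s of }u'\big)$, ordered lexicographically. If $u'=yu$ (or $v'=yv$), rule (i) gives $\overline{\Delta}_0(yu\sho v')=\overline{\Delta}_0(y)\,\overline{\Delta}_0(u\sho v')$, and the induction hypothesis (the weight drops), the exact $y$-identity above, and the stability of $\cL_-^{(2)}$ under $\overline{\Delta}_0(y)$ close this case. If $u'=du$ and $v'=dv$, applying $\overline{\Delta}_0$ to rule (iii) gives $\overline{\Delta}_0(du\sho dv)=\overline{\Delta}_0(d)\,\overline{\Delta}_0(u\sho dv)-\overline{\Delta}_0(u\sho d^2v)$; now $u\sho dv$ has strictly smaller weight while $u\sho d^2v$ has the same weight but one fewer leading $d$ in its first argument, so the induction hypothesis applies to both. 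Replacing them by the corresponding shuffles of coproducts — prepending $\overline{\Delta}_0(d)$ to the first summand, which keeps the induction remainder inside $\cL_-^{(2)}$ by the stability lemma — the coproduct-level rule (iii) collapses the expression to $\overline{\Delta}_0(du)\sho\overline{\Delta}_0(dv)$ modulo $\cL_-^{(2)}$, as desired.

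The main obstacle is precisely this $d$–$d$ case, for two intertwined reasons. First, rule (iii) does not decrease the total weight, exactly as in the associativity proof of Proposition \ref{lem:Jalgebra}; I resolve this by refining the induction measure with the leading-$d$ count (equivalently, by iterating rule (iii) until a letter $y$ is exposed). Second, and more seriously, the reduction forces one to multiply an induction remainder lying in $\cL_-^{(2)}$ by the concatenation operator $\overline{\Delta}_0(d)$, so the whole argument hinges on left-$d$-concatenation stability of $\cL_-$. Unlike the effortless $y$ case, this stability is not formal and is the real content to be verified; it is, however, built into the definition of $\cL_-$ through the inclusion of all the $d$-prefixed defects $d^k(\cdots)$ among the generators.
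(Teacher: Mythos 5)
Your proof is correct and follows the same strategy as the paper's: an induction in which rule (iii) drives the $d$--$d$ case, reducing it at constant weight to pairs with fewer leading $d$'s until an argument starting with $y$ (or the empty word) appears; your lexicographic measure $\big(\wt(u')+\wt(v'),\,\text{number of leading }d\text{'s of }u'\big)$ is just a well-founded formalization of what the paper calls ``iterating this process''. The genuine difference is that you state and prove two lemmas the paper uses silently. First, your coproduct-level Leibniz congruence is the $\lambda=0$, mod-$\cL_-^{(2)}$ analogue of Lemma \ref{lem:proco}, needed to pass from $\overline{\Delta}_0(d)\big(\overline{\Delta}_0(u)\sho\overline{\Delta}_0(dv)\big)$ to $\overline{\Delta}_0(du)\sho\overline{\Delta}_0(dv)+\overline{\Delta}_0(u)\sho\overline{\Delta}_0(d^2v)$ in the paper's display. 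Second, and more substantially, the paper's chain of congruences substitutes the induction hypothesis \emph{inside} the concatenation operator $\overline{\Delta}_0(d)$, which is legitimate only if $\overline{\Delta}_0(d)\cdot\cL_-^{(2)}\subseteq\cL_-^{(2)}$, i.e.\ only if $\cL_-$ is stable under left concatenation by $d$; this is exactly your stability lemma, which the paper never states, and it is indeed the structural reason why the generators of $\cL_-$ include all the $d$-prefixed defects $d^k(\cdots)$ rather than only the $k=0$ ones. One small repair to your sketch of that lemma: the ``partial Leibniz rule'' (exact when the second factor begins with $d$) is not the right tool for propagating stability through products $x\sho m$ with $m\in\cL_-$ an arbitrary combination; what is needed there is the observation you already made in your first paragraph, namely that for any words $a,c$ the defect $d(a\sho c)-da\sho c-a\sho dc$ is itself a $k=0$ generator, so that $d$ acts as a derivation modulo $\cL_-$, and then the induction over the generation of the ideal (together with closure of the generators under $d$-prefixing and the ideal property) closes the argument.
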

\begin{proof}
We use induction on $r:=|u'|+|v'|$. The cases $r=0$ and $r=1$ being immediate. The case $u'=yu$ or $v'=yv$ is easy and left to the reader. In the case $u'=du$ and $v'=dv$ we compute:
\begin{align*}
& \overline{\Delta}_0(du\sho dv) =\overline{\Delta}_0\big(d(u\sho dv)-u\sho d^2v)\big)\\
=&~\overline{\Delta}_0(d)\overline{\Delta}_0(u\sho dv)-\overline{\Delta}_0(u\sho d^2v)\\
=&~\overline{\Delta}_0(d)\big(\overline{\Delta}_0(u)\sho \overline{\Delta}_0(dv)\big)-\overline{\Delta}_0(u\sho d^2v)\mod \cL_-^{(2)}\\
=&~\overline{\Delta}_0(d)\overline{\Delta}_0(u)\sho \overline{\Delta}_0(dv)+\overline{\Delta}_0(u)\sho \overline{\Delta}_0(d)\overline{\Delta}_0(d) \overline{\Delta}_0(v)-\overline{\Delta}_0(u\sho d^2v)\mod \cL_-^{(2)},
\end{align*}
hence we get:
\begin{equation}
\overline{\Delta}_0(du\sho dv)-\overline{\Delta}_0(du)\sho\overline{\Delta}_0(dv)=-\big(\overline{\Delta}_0(u\sho d^2v)-\overline{\Delta}_0(u)\sho \overline{\Delta}_0(d^2v)\big)\mod \cL_-^{(2)}.
\end{equation}
Iterating this process we return to the case when one of the arguments starts with a $y$.
\end{proof}

%%%%%%%%%%%%%%%%%%%%%%%%%%%
%%%%%%%%%%%%%%%%%%%%%%%%%%%

\subsubsection{The Hopf algebra $\cH_0$}

Let $\cH_0:=\bQ\langle L\rangle /(\cL_-+\cT_-)$. 

\begin{proposition}\label{prop:coideal-L}
The ideal $\cL_-$ is a coideal of $(\bQ\langle L\rangle,\overline{\Delta}_0)$, where $\overline{\Delta}_0$ is defined by (C1) and (C2) with $\lambda =0$.
\end{proposition}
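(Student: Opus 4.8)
The plan is to verify the two defining properties of a coideal: that the counit vanishes on $\cL_-$, and that $\overline{\Delta}_0(\cL_-)\subseteq\cL_-^{(2)}$. The counit condition is the easy part. Since $\sho$ is weight-graded, $\varepsilon$ is multiplicative for $\sho$ (it kills every product of two nonempty words), and each generator $d^k\big(d(u\sho v)-du\sho v-u\sho dv\big)$ lies in the augmentation ideal $\ker\varepsilon$; multiplicativity then propagates this to all of $\cL_-$. For the coproduct condition I would first pass from the whole ideal to its generators: if $x\in\bQ\langle L\rangle$ satisfies $\overline{\Delta}_0(x)\in\cL_-^{(2)}$ and $a$ is any word, then Proposition \ref{prop:comp-delta-zero} gives $\overline{\Delta}_0(a\sho x)\equiv\overline{\Delta}_0(a)\sho\overline{\Delta}_0(x)\pmod{\cL_-^{(2)}}$, and since $\cL_-^{(2)}$ is a two-sided $\sho$-ideal of $\bQ\langle L\rangle\otimes\bQ\langle L\rangle$ (because $\cL_-$ is one of $\bQ\langle L\rangle$), the right-hand side lies in $\cL_-^{(2)}$; the same holds for $x\sho a$. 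As $\cL_-$ is by construction spanned by iterated left/right $\sho$-products of the generators, an induction on this construction reduces everything to checking $\overline{\Delta}_0(g)\in\cL_-^{(2)}$ for the generators $g$.

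The heart of the matter, and what I expect to be the main obstacle, is that $\overline{\Delta}_0$ is an algebra morphism for \emph{concatenation}, whereas $\cL_-$ is an ideal for the \emph{shuffle} product $\sho$; bridging the two requires knowing that $\cL_-$ is stable under left concatenation by $d$. I would isolate this as a lemma. Writing $\ell_d(w):=dw$ and $P(a,x):=d(a\sho x)-da\sho x-a\sho dx$, the very definition of the generators yields the identity
$$\ell_d(a\sho x)=P(a,x)+\ell_d(a)\sho x+a\sho\ell_d(x),$$
valid for all $a,x$ by bilinearity. Here $P(a,x)\in\cL_-$ because, expanding $x$ into words $w$, each $P(a,w)$ is literally a ($k=0$) generator. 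Hence, if $\ell_d(x)\in\cL_-$, then $\ell_d(a\sho x)\in\cL_-$ and likewise $\ell_d(x\sho a)\in\cL_-$, since $\cL_-$ is a $\sho$-ideal. As $\ell_d$ sends each generator $d^kP(u,v)$ to the generator $d^{k+1}P(u,v)$, induction on the ideal construction gives $\ell_d(\cL_-)\subseteq\cL_-$. Consequently the concatenation action of $\overline{\Delta}_0(d)=\be\otimes d+d\otimes\be$ preserves $\cL_-^{(2)}$.

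With this in hand the generator computation is routine. Because $\overline{\Delta}_0$ is a concatenation morphism, $\overline{\Delta}_0(d^kP(u,v))=\overline{\Delta}_0(d)^k\,\overline{\Delta}_0(P(u,v))$, so by the stability lemma it suffices to treat $k=0$. For $P(u,v)$ I would first record, in Sweedler notation, the exact tensor identity
\begin{align*}
\overline{\Delta}_0(d)\big[\overline{\Delta}_0(u)\sho\overline{\Delta}_0(v)\big]=~&\overline{\Delta}_0(du)\sho\overline{\Delta}_0(v)+\overline{\Delta}_0(u)\sho\overline{\Delta}_0(dv)\\
&+\textstyle\sum(u_1\sho v_1)\otimes P(u_2,v_2)+\sum P(u_1,v_1)\otimes(u_2\sho v_2),
\end{align*}
obtained by expanding both sides componentwise (it is the $\lambda=0$ shadow of \eqref{eq:deltad}, with the $\lambda$-term replaced by the two manifestly $\cL_-^{(2)}$-valued correction terms). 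Applying Proposition \ref{prop:comp-delta-zero} to each of $\overline{\Delta}_0(u\sho v)$, $\overline{\Delta}_0(du\sho v)$, $\overline{\Delta}_0(u\sho dv)$ in the defining expression $\overline{\Delta}_0(P(u,v))=\overline{\Delta}_0(d)\overline{\Delta}_0(u\sho v)-\overline{\Delta}_0(du\sho v)-\overline{\Delta}_0(u\sho dv)$ and substituting the identity above, every remaining discrepancy is either one of the two $P$-correction terms, an error term $\overline{\Delta}_0(\cdot\sho\cdot)-\overline{\Delta}_0(\cdot)\sho\overline{\Delta}_0(\cdot)\in\cL_-^{(2)}$, or $\overline{\Delta}_0(d)$ applied to such an error term. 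It is exactly this last case where the stability lemma is invoked to stay inside $\cL_-^{(2)}$. Thus $\overline{\Delta}_0(P(u,v))\in\cL_-^{(2)}$, which closes the induction and proves that $\cL_-$ is a coideal.
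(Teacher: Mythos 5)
Your proposal is correct and follows essentially the same route as the paper's proof: reduce to the generators $d^k\big(d(u\sho v)-du\sho v-u\sho dv\big)$ via Proposition \ref{prop:comp-delta-zero}, then verify membership in $\cL_-^{(2)}$ by combining the fact that $\overline{\Delta}_0$ is a concatenation morphism with the tensor identity whose correction terms $\sum(u_1\sho v_1)\otimes P(u_2,v_2)+\sum P(u_1,v_1)\otimes(u_2\sho v_2)$ lie manifestly in $\cL_-^{(2)}$. Your stability lemma $\ell_d(\cL_-)\subseteq\cL_-$ (hence that concatenation by $\overline{\Delta}_0(d)$ preserves $\cL_-^{(2)}$) is not a different approach but a welcome explicit justification of two points the paper's two-line computation uses silently, namely replacing $\overline{\Delta}_0(d)\overline{\Delta}_0(u\sho v)$ by $\overline{\Delta}_0(d)\big(\overline{\Delta}_0(u)\sho\overline{\Delta}_0(v)\big)$ modulo $\cL_-^{(2)}$, and covering the generators with $k\geq 1$.
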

\begin{proof}
Using Proposition \ref{prop:comp-delta-zero} we compute
\begin{align*}
 &\overline\Delta_0 \big(d(u\sho v)-du\sho v-u\sho dv\big)\\
=&~\overline\Delta_0(d)\big(\overline\Delta_0(u)\sho \overline\Delta_0(v)\big)-\overline\Delta_0(du)\sho\overline\Delta_0(v)-\overline\Delta_0(u)\sho \overline\Delta_0(dv)\mod\cL_-^{(2)}\\
=&~\overline\Delta_0(d)\big(\overline\Delta_0(u)\sho \overline\Delta_0(v)\big)-\overline\Delta_0(d)\overline\Delta_0(u)\sho\overline\Delta_0(v)-\overline\Delta_0(u)\sho \overline\Delta_0(d)\overline\Delta_0(v)\mod\cL_-^{(2)}.
\end{align*}
Hence,  $\overline\Delta_0\big(d(u\sho v)-du\sho v-u\sho dv\big)\in\cL_-^{(2)}$.
\end{proof}
\begin{corollary}
$\cH_0$ is a commutative Hopf algebra.
\end{corollary}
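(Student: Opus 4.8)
The plan is to verify that $\cH_0=\bQ\langle L\rangle/(\cL_-+\cT_-)$ satisfies all the axioms of a commutative Hopf algebra by assembling the structural results already established. First I would check that $\cL_-+\cT_-$ is simultaneously a two-sided ideal and a coideal, so that $\cH_0$ inherits both an algebra and a coalgebra structure from $\bQ\langle L\rangle$. The ideal property is immediate: $\cT_-$ is a two-sided ideal of $(\bQ\langle L\rangle,\sho)$ (Lemma~\ref{lem:idealT}, restricted as in Proposition~\ref{prop:algebraH}), and $\cL_-$ is a two-sided ideal by its very definition, so their sum is again a two-sided ideal. For the coideal property, Proposition~\ref{prop:coideal-L} shows that $\cL_-$ is a coideal of $(\bQ\langle L\rangle,\overline\Delta_0)$, and Proposition~\ref{prop:coalgebraH} (with $\lambda=0$) shows that $\cT_-$ is a coideal; hence their sum $\cL_-+\cT_-$ is a coideal as well.

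Next I would confirm that the resulting quotient is in fact a bialgebra, i.e.\ that the product and coproduct are compatible on $\cH_0$. This is precisely the content of Proposition~\ref{prop:comp-delta-zero}, which gives
$$
\overline\Delta_0(u'\sho v')=\overline\Delta_0(u')\sho\overline\Delta_0(v')\mod\cL_-^{(2)}
$$
for all $u',v'\in L^\ast$. Passing to the quotient by $\cL_-+\cT_-$ (and noting $\cL_-^{(2)}$ projects into the kernel controlling the tensor square), this says exactly that $\Delta_0$ is an algebra morphism on $\cH_0$, so $(\cH_0,\sho,\eta,\Delta_0,\varepsilon)$ is a bialgebra. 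Commutativity is inherited from Proposition~\ref{lem:Jalgebra}, and one must also remark that $\sho$ descends to an associative product on $\cH_0$: although $\sho$ is only associative modulo $\cL$ on $\cA_0$, the relevant relations defining $\cL_-$ are built into the quotient, so associativity holds in $\cH_0$.

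Finally, to upgrade the bialgebra to a Hopf algebra I would invoke connectedness. The grading by depth, $\cH=\bigoplus_{n\ge 0}\cH_{(n)}$ with $\cH_{(0)}=\bQ\be$, descends to $\cH_0$, and since $\cL_-+\cT_-$ is a graded (in fact depth-homogeneous) ideal, $\cH_0$ remains connected and graded. A connected graded bialgebra automatically admits an antipode, defined recursively, so $\cH_0$ is a Hopf algebra; commutativity was already noted. This is the same connectedness argument used in the proof of Theorem~\ref{theo:HopfQuot}.

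The main obstacle I anticipate is the bookkeeping around the two quotients acting together. One must be careful that $\cL_-^{(2)}=\cL_-\otimes\bQ\langle L\rangle+\bQ\langle L\rangle\otimes\cL_-$ together with the $\cT_-$-tensor terms correctly assemble into the kernel of the projection $\bQ\langle L\rangle^{\otimes 2}\to\cH_0^{\otimes 2}$, so that ``$\mod\cL_-^{(2)}$'' in Proposition~\ref{prop:comp-delta-zero} genuinely yields an equality in $\cH_0\otimes\cH_0$ after also reducing modulo $\cT_-$. Verifying that the $\cT_-$-coideal relations and the $\cL_-$-relations do not interfere with one another under $\overline\Delta_0$—i.e.\ that the sum of a coideal and a coideal remains a coideal and that the compatibility survives the combined reduction—is the delicate point, though it follows formally once the ideal/coideal statements for $\cT_-$ and $\cL_-$ are in hand.
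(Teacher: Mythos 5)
Your proposal is correct and follows essentially the same route as the paper's proof: both combine Proposition~\ref{prop:coalgebraH} and Proposition~\ref{prop:coideal-L} to see that $\cT_-+\cL_-$ is simultaneously an ideal and a coideal (with Proposition~\ref{prop:comp-delta-zero} supplying the product--coproduct compatibility), conclude that the quotient $\cH_0$ is a bialgebra, and then invoke the depth grading and connectedness to obtain the antipode. Your write-up merely makes explicit some bookkeeping (the ideal property of the sum, commutativity/associativity descending to the quotient, and the identification of $\cL_-^{(2)}$ plus the $\cT_-$-terms with the kernel of the projection to $\cH_0\otimes\cH_0$) that the paper compresses into a few sentences.
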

\begin{proof}
From Proposition \ref{prop:coalgebraH} and Proposition \ref{prop:coideal-L} we get that the ideal $\cT_-+\cL_-$ is also a coideal of $\bQ\langle L\rangle$. Hence the quotient $\cH_0$ is a bialgebra. It is graded by the depth (and weight), hence connected, thus $\cH_0$ is a Hopf algebra.
\end{proof}
We will denote by $\Delta_0$ the coproduct on $\cH_0$. By a slight abuse of notation the coproduct on $\bQ\langle L\rangle/\cT$, which was deduced from $\overline\Delta_0$, will also be denoted $\Delta_0$.  Note that the ideal $\cT_-+\cL_-$ is also graded by depth. One then gets a grading on the quotient $\cH_0$, which we still denote by $\dpt$.

%%%%%%%%%%%%%%%%%%%%%%%%%%%
%%%%%%%%%%%%%%%%%%%%%%%%%%%

\subsubsection{Shuffle factorization}

\noindent Let $\lambda\in\bQ$, including the case $\lambda=0$. From connectedness we can always write 
 \begin{align*}
 	\dl([w]) = \be \otimes [w] + [w] \otimes \be + \tdl([w])\hspace{0.5cm} 
	\text{with}\hspace{0.5cm} \tdl([w])\in \bigoplus_{p+q=n \atop p\neq 0,q\neq 0}\cH_{(p)}\otimes \cH_{(q)}. 
\end{align*}
Therefore in the following we use two variants of Sweedler's notation 
\begin{align*}
	\dl([w])=\sum_{([w])}[w]_1\otimes [w]_2\hspace{0.5cm}
	\text{and}\hspace{0.5cm} 
	\tdl([w])=\sum_{([w])}[w]'\otimes [w]''.
\end{align*}

\noindent The following theorem, valid for any $\lambda$, including $\lambda=0$, provides a nice example of the theory outlined in \cite{Patras93}.

\begin{theorem}\label{theo:trivialrenorm}
 Let $\lambda \in \bQ$. Then for all $w\in L^*$ we have 
 \begin{align*}
  \shl  \circ \dl ([w]) = 2^{\dpt([w])}[w],
 \end{align*}
 where $[w]$ stands for the class of $w$ modulo $\cT_-$ in the case $\lambda\neq 0$ (resp. modulo $\cT_-+\cL_-$ in the case $\lambda=0$).
\end{theorem}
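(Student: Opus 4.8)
The plan is to recognise the left‑hand side as the second Hopf power (Adams operation) $\Psi:=m\circ\dl=\shl\circ\dl$ and to show that it coincides with the linear map $\theta$ that rescales the depth‑$n$ component $\cH_{(n)}$ by $2^n$. First I would record that \emph{both} $\Psi$ and $\theta$ are algebra endomorphisms of $(\cH_\lambda,\shl)$. For $\Psi$ this uses that $\shl$ is commutative (Proposition \ref{prop:algebraH}), so that $m\colon\cH_\lambda\otimes\cH_\lambda\to\cH_\lambda$ is an algebra morphism, together with the bialgebra identity $\dl(a\shl b)=\dl(a)\shl\dl(b)$ from Theorem \ref{theo:HopfQuot}; hence $\Psi=m\circ\dl$ is a composite of algebra morphisms. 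For $\theta$ it suffices that $\shl$ respects the depth grading, $\cH_{(p)}\shl\cH_{(q)}\subseteq\cH_{(p+q)}$, whence $2^{\dpt(a\shl b)}=2^{\dpt(a)}\,2^{\dpt(b)}$. Both maps fix $\be$. Since two unital algebra endomorphisms that agree on a generating set agree everywhere, the theorem reduces to identifying a generating set and checking the identity on it.

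The key lemma, and the main obstacle, is that $(\cH_\lambda,\shl)$ is generated as a commutative algebra by the primitive elements $d^ky$, $k\ge 0$ (each is primitive by Example \ref{ex:coproduct}, and each has depth $1$). I would prove this by induction on depth. A word $w\in Y$ of depth $n$ either begins with $y$, say $w=yu$ with $u$ admissible of depth $n-1$, in which case rule (P2) gives directly $w=y\shl u$, so $w$ lies in the generated subalgebra by the depth induction; or $w$ begins with $d$. The $d$‑leading case is the delicate one and is handled by a secondary induction on the weight $\wt(w)$: using rule (P3) one shows that a suitably chosen $\shl$‑product of generators equals $w$ plus correction terms, each of which is either $y$‑leading of the same depth (already treated) or of strictly smaller weight (treated by the weight induction). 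Solving for $w$ is legitimate precisely because the coefficient of $w$ in that product is a nonzero power of $\tfrac1\lambda$. I would emphasise that this is a genuinely \emph{filtered}, not graded, phenomenon: in the associated graded for the weight filtration the product of two $d$‑leading words vanishes, so it is exactly the lower‑weight corrections generated by the $\tfrac1\lambda$ in (P3) that make generation work — which is why the argument requires $\lambda\neq 0$. Setting up the triangular system (choosing, for each $w$, the factorisation that splits off the first $d$‑block, and checking that $w$ is the unique top‑weight $d$‑leading term) is the real combinatorial content here.

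With the generation lemma in hand the conclusion is immediate: on a generator one computes $\Psi(d^ky)=\be\shl d^ky+d^ky\shl\be=2\,d^ky=2^{\dpt(d^ky)}d^ky=\theta(d^ky)$, using that $d^ky$ is primitive of depth $1$, and likewise $\Psi(\be)=\be=\theta(\be)$. Thus $\Psi$ and $\theta$ are algebra endomorphisms agreeing on a generating set, hence equal, which is exactly $\shl\circ\dl([w])=2^{\dpt([w])}[w]$. For $\lambda=0$ the same scheme applies to $(\cH_0,\sho)$ and $\Delta_0$, with the Leibniz‑type relation $d(u\sho v)\equiv du\sho v+u\sho dv \pmod{\cL_-}$ playing the role of (P3) in the generation step, and with the multiplicativity of $\Psi$ supplied by the fact that $\cH_0$ is a commutative connected Hopf algebra (Corollary following Proposition \ref{prop:coideal-L}). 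Conceptually, the whole statement says that the depth grading coincides with the canonical eigenspace (Adams/Eulerian‑idempotent) grading of the commutative connected Hopf algebra $\cH_\lambda$, which is the theory of \cite{Patras93}; the argument above simply verifies that the primitives $d^ky$ sit in depth $1$ and generate.
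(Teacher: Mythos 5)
Your high-level reduction is valid and genuinely different from the paper's argument. The paper proves the identity by a short induction on the weight of $w$: it splits off the first letter, uses only that $\odl$ is multiplicative with respect to \emph{concatenation}, and then observes that (P2), respectively (P3) rearranged as $du\shl v+u\shl dv+\lambda\,(du\shl dv)=d(u\shl v)$, collapses $\shl\circ\dl([yv])$ to $2\,[y]\,(\shl\circ\dl([v]))$ and $\shl\circ\dl([dv])$ to $[d]\,(\shl\circ\dl([v]))$. In particular the paper never invokes the bialgebra compatibility, whereas your argument needs Theorem \ref{theo:HopfQuot} (resp.\ Proposition \ref{prop:comp-delta-zero}) to make $\Psi=\shl\circ\dl$ an algebra endomorphism; in exchange your argument explains structurally why the eigenvalues are $2^{\dpt}$, in the spirit of the paper's reference to \cite{Patras93}. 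Your steps ``$\Psi$ and $\theta$ are algebra endomorphisms'', ``$d^ky$ is primitive of depth one'', and ``endomorphisms agreeing on generators agree'' are all correct.

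There is, however, a genuine gap exactly at the point you call the real combinatorial content: the generation lemma is true, but the proof you sketch fails. First, ``splitting off the first $d$-block'' does not produce $w$ at all: for $w=dydy$ the recipe gives $dy\shl dy=\frac{1}{\lambda}\,(dyy-2\,ydy)$, in which $dydy$ has coefficient $0$ (one must instead use $dy\shl ddy$); for $w=ddyy$ it gives $ddy\shl y=yddy$. Second, the claimed triangular structure of the corrections is false: the natural product of generators containing $ddyy$ is
\begin{align*}
ddy\shl ddy=\frac{1}{\lambda^2}\,ddyy-\frac{4}{\lambda^2}\,dydy+\frac{2}{\lambda^2}\,yddy+\frac{2}{\lambda^3}\,dyy-\frac{4}{\lambda^3}\,ydy,
\end{align*}
and the correction $dydy$ is $d$-leading of the \emph{same} depth and the \emph{same} weight, so it is neither ``$y$-leading of the same depth'' nor ``of strictly smaller weight''; the other candidates $d^ay\shl d^by$ containing $ddyy$ only make matters worse, as their supports contain $d$-leading words of \emph{higher} weight. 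So the induction as you set it up does not close, and ``$w$ is the unique top-weight $d$-leading term'' is simply not true.

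The outer argument can be salvaged, because the lemma admits a clean proof along the lines you intend. Rearrange (P3) as $d(u\shl v)=\lambda\,(du\shl dv)+du\shl v+u\shl dv$ and iterate: by induction on $j$, $d^{\,j}(u\shl v)$ is a $\bQ$-linear combination of products $d^{a}u\shl d^{b}v$. Now induct on depth only: writing $w=d^{k_1}y\,v'$ with $v'$ of depth $n-1$, rule (P2) gives $w=d^{k_1}(y\shl v')$, which by the above is a combination of products of the generator $d^ay$ with words $d^bv'$ of depth $n-1$, all lying in the generated subalgebra by the depth induction. No weight induction and no triangularity are needed; for $\lambda=0$ the same works with the Leibniz relation $d(u\sho v)=du\sho v+u\sho dv$ modulo $\cL_-$, producing binomial coefficients. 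With this replacement your proof is complete.
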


\begin{proof}
We prove this by induction on the weight of $[w]$. For $\wt([w])=0$ we have $[w]=\be$ and obtain$\shl \circ \dl(\be) = \shl(\be \otimes \be) = \be.$ For the inductive step we consider two cases: \\
 \noindent\textbullet~1st case: $w=yv$ with $v\in L^*$\\
 We have $\dpt([w]) = \dpt([v])+1$ and obtain
 \allowdisplaybreaks{
 \begin{align*}
  \shl \circ \dl([yv]) 
  & = \shl \circ (\dl([y])\dl([v]))  = \shl \left(\sum_{([v])}[v]_1\otimes [yv]_2 +\sum_{([v])}[yv]_1\otimes [v]_2\right) \\
  & =  [y] \left(\sum_{([v])}\shl([v]_1\otimes [v]_2) +\sum_{([v])}\shl([v]_1\otimes [v]_2)\right) = 2[y] (\shl\circ \dl([v])) \\
  & = 2^{\dpt([v])+1}[yv]  = 2^{\dpt([w])}[w].
 \end{align*}}
 
 \noindent\textbullet~2nd case: $w=dv$ with $v\in L^*$\\
 Since $\dpt([w])=\dpt([v])$ we observe
 \allowdisplaybreaks{
 \begin{align*}
  \shl \circ \dl([dv]) 
  & = \shl \circ (\dl([d])\dl([v]))\\
  &= \shl \left(\sum_{([v])}[dv]_1\otimes [v]_2 + [v]_1\otimes [dv]_2 + \lambda ([dv]_1\otimes [dv]_2) \right)\\
  & = [d] \left(  \sum_{([v])} \shl ( [v]_1 \otimes [v]_2) \right) = [d](\shl \circ \dl([v]))\\
  & = 2^{\dpt([v])}[dv]= 2^{\dpt([w])}[w].
 \end{align*}}
\end{proof}

\begin{corollary}
\label{cor:decomposition}
 Let $\lambda \in \bQ$. 
 Then for all $w \in L^*$, we have 
\begin{align*}
  (2^{\dpt([w])}-2) [w] 	&= \shl  \circ \tdl([w])\\
  				&= \sum_{([w])} [w]' \shl [w]'' = K \star K([w]).
\end{align*}
The linear map $K:=\Id - \eta \circ \varepsilon \in {\mathrm{End}}_\bQ(\cH_\lambda)$ is a projector to the augmentation ideal $\cH':=\bigoplus_{n > 0} \cH_n$, and $f \star g := \shl \circ (f \otimes g)  \circ \Delta_\lambda$, $f,g\in {\mathrm{End}}_\bQ(\cH_\lambda)$. 
\end{corollary}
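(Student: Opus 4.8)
The plan is to read everything off from Theorem \ref{theo:trivialrenorm} together with the connectedness splitting of the coproduct, so that no new combinatorial work is needed. First I would start from the decomposition $\dl([w]) = \be \otimes [w] + [w] \otimes \be + \tdl([w])$, which is valid for every $\lambda$ (including $\lambda=0$), and apply the shuffle product $\shl$ to both sides. By (P1) we have $\shl(\be \otimes [w]) = \shl([w]\otimes \be) = [w]$, so the two boundary terms together contribute $2[w]$, while Theorem \ref{theo:trivialrenorm} evaluates the left-hand side as $2^{\dpt([w])}[w]$. Rearranging yields $\shl \circ \tdl([w]) = (2^{\dpt([w])}-2)[w]$, which is the first equality.

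The second equality is merely the unpacking of Sweedler's notation: writing $\tdl([w]) = \sum_{([w])}[w]'\otimes[w]''$ and applying $\shl$ termwise gives $\sum_{([w])}[w]'\shl[w]''$, by definition of how $\shl$ acts on a tensor.

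For the third equality I would compute $K\star K([w]) = \shl\circ (K\otimes K)\circ\dl([w])$ directly on the three summands of $\dl([w])$. Since $\varepsilon(\be)=1$, we get $K(\be) = \be - \eta(\varepsilon(\be)) = 0$, so both $(K\otimes K)(\be\otimes[w])$ and $(K\otimes K)([w]\otimes\be)$ vanish. On the remaining term the range condition $\tdl([w]) \in \bigoplus_{p+q=n,\,p\neq 0,\,q\neq 0}\cH_{(p)}\otimes\cH_{(q)}$ guarantees that both tensor factors have strictly positive depth, hence lie in the augmentation ideal $\cH'$ on which $K$ restricts to the identity; therefore $(K\otimes K)\tdl([w]) = \tdl([w])$. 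Applying $\shl$ recovers $\sum_{([w])}[w]'\shl[w]''$, closing the chain of equalities.

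The main (and essentially only) obstacle is the small bookkeeping needed to check that $K$ annihilates $\be$ and acts as the identity on each factor appearing in $\tdl([w])$; this is immediate once one invokes the range of $\tdl$, so no genuine difficulty arises. All of the substantive content has already been absorbed into Theorem \ref{theo:trivialrenorm}, and the corollary is then just its reformulation in terms of the convolution product $\star$.
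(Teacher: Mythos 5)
Your proof is correct and is essentially the paper's own argument: the corollary is stated without a separate proof as an immediate consequence of Theorem \ref{theo:trivialrenorm}, and your derivation---applying $\shl$ to the connectedness splitting of $\dl([w])$, unpacking Sweedler notation, and checking that $K(\be)=0$ while $K$ acts as the identity on both (positive-depth) factors of $\tdl([w])$---is exactly the intended filling-in of the details. The only caveat, which is a defect of the paper's statement rather than of your argument, is the degenerate case $[w]=\be$: there $K\star K(\be)=0$ while $(2^{0}-2)\be=-\be$, so the chain of equalities really requires $\dpt([w])\ge 1$.
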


%%%%%%%%%%%%%%%%%%%%%%%%%%%
%%%%%%%%%%%%%%%%%%%%%%%%%%%

\subsubsection{A combinatorial description of the coproduct $\dl$} 
\label{ssect:combCoprod}

In the following we give a combinatorial description of the coproduct $\dl$. However, note that we consider the construction only on an admissible representative $w \in Y$ of a given equivalence class in $\mathbb{Q}\langle L \rangle/ \cT_-$ for $\lambda\neq 0$ (resp. in $\mathbb{Q}\langle L \rangle/ (\cL_-+\cT_-)$ for $\lambda= 0$).   

Let $w:= d^{n_1-1}y \cdots d^{n_{k-1}-1}yd^{n_k-1}y \in Y$ be a word, with $n:=\sum_{i=1}^kn_i$, and define for $1 \leq m \leq k$,  $N^m_w:=\{n_1,n_1+n_2,\ldots,n_1+ \cdots + n_{m}\}$. The coproduct $\dl(w)$ can be calculated as follows. Let $S:=\{s_1 < \cdots < s_l\} \subseteq [n]:=\{1,\ldots,n\}$ and $\bar S:=[n]\backslash S=\{\bar s_1 < \cdots < \bar s_{n-l}\} $. Define the words $w_S:= w_{s_1} \cdots w_{s_l}$ and $w_{\bar S}:= w_{\bar s_1} \cdots w_{\bar s_{n-l}}$. The set $S$ is called \emph{admissible} if both $w_S$ and $w_{\bar S}$ are in $Y$, i.e., if $s_l,\bar s_{n-l} \in N^k_w$ with $w\in Y$. The coproduct is then given by
 \begin{align}
 \label{lambdaCoproduct}
	\dl(w) = \sum_{S \subseteq [n] \atop S\ {\rm{adm}}} w_S \otimes w_{\bar S} 
	+  \sum_{S \subseteq [n] \atop S\ {\rm{adm}}} \sum_{J=\{j_1 < \cdots < j_p\} \subset S \atop {J \neq \emptyset,\ J \cap N^k_w 
	= \emptyset \atop j_p < n_1+ \cdots + n_{k-1}}} \lambda^{|J|}  w_S \otimes w_{[n] \backslash (S \backslash J)}.
\end{align}    

For $\lambda=0$ this reduces to the coproduct corresponding to MPLs at non-positive integer arguments 
$$
	\Delta_0(w) := \sum_{S \subseteq [n] \atop S\ {\rm{adm}}} w_S \otimes w_{\bar S}. 
$$

We introduce now a graphical notation, which should make the above more transparent. The set of vertices $V:=\{\d ,\y\}$ is used to define a polygon. The black vertex $\d \sim d$ and the white one $\y \sim y$. To each admissible word $w = d^{n_1-1}y \cdots d^{n_{k-1}-1}yd^{n_k-1}y \in Y$ corresponds an polygon with clockwise oriented edges, and the vertices colored clockwise according to the word $w$. For instance, the word $w=ddydy$ corresponds to
$$
	\scalebox{0.7}{\ddydy}
$$    
An admissible subset $S \subseteq [n]$ corresponds to a sub polygon. The admissible subsets for the above example are as follows:
$\{1,3\}, \{2,3\}, \{4,5\}$ correspond respectively to
$$
	\scalebox{0.7}{\PdyA \quad 
	\PdyB \quad  
	\PdyC}
$$
$\{1,2,3\}, \{2,4,5\}$, $\{1,4,5\}$ correspond respectively to
$$
	\scalebox{0.7}{\PddyA \quad 
	\PddyB \quad 
	\PddyC}
$$
$\{1,2,4,5\}$ and $\{3\}$ correspond respectively to
$$
	\scalebox{0.7}{\Pdddy}
	\quad 
	\scalebox{0.7}{\Py}
$$
In the last polygon we have marked the single vertex subpolygon by a circle around the white vertex.
The coproduct 
\begin{align*}
	\Delta_0(ddydy) 	&=\sum_{S \subseteq [n] \atop S\ {\rm{adm}}} w_S \otimes w_{\bar S} \\
				&= ddydy \otimes \be + \be \otimes ddydy 
				+ 3 dy \otimes ddy + 3 ddy \otimes dy + dddy \otimes y + y \otimes dddy.
\end{align*}
The first two terms on the right-hand side correspond to $S=[n]$ and $S=\emptyset$, respectively.

The pictorial description of the weight-$\lambda$ coproduct (\ref{lambdaCoproduct}) is captured as follows. The second term on the right-hand side of the coproduct (\ref{lambdaCoproduct}) reflects the term $ \lambda d\otimes d$ in the coproduct 
$$
	\odl(d):= \be\otimes d + d\otimes \be+ \lambda d\otimes d
$$
defined further above. It amounts to a certain doubling of those black vertices in an admissible word $w = d^{n_1-1}y \cdots d^{n_{k-1}-1}yd^{n_k-1}y \in Y$, which appear before the $y$ at position $n_1 + \cdots + n_{k-1}$. Algebraically this means that extracting a subpolygon corresponding to the admissible subset $S \subseteq [n]$ leads to a splitting of the word $w$ into $w_S$ and $w_{\bar S '}$, where the augmented complement sets ${\bar S '}$ contain $\bar S$, i.e., $\bar S  \subset {\bar S '}$. This is due to not eliminating several black vertices, i.e., $d$'s that appear before the $y$ at position $n_1 + \cdots + n_{k-1}$. Pictorially we denoted this by doubling black vertices. Returning to the example above. We find for the admissible word $w=ddydy$         
$$
	\scalebox{0.7}{\PdyAx \quad 
	\PdyBx }
$$
which correspond to the admissible subsets and related augmented complement sets: $\{1,3\}$, ${\bar S '}=\{1,2,4,5\}$, and $\{2,3\}$, ${\bar S '}=\{1,2,4,5\}$, respectively. Next we consider
$$
	\scalebox{0.7}{\PddyBx \quad 
	\PddyCx}
$$
where we have $\{2,4,5\}$, ${\bar S '}=\{1,2,3\}$, and $\{1,4,5\}$, ${\bar S '}=\{1,2,3\}$, respectively. For
$$
	\scalebox{0.7}{\PddyAxa \quad 
	\PddyAxb \quad 
	\PddyAxx}
$$
we have $\{1,2,3\}$, ${\bar S '}=\{1,4,5\}$, and $\{1,2,3\}$, ${\bar S '}=\{2,4,5\}$, and $\{1,2,3\}$, ${\bar S '}=\{1,2,4,5\}$, respectively. Finally
$$
	\scalebox{0.7}{\Pdddyxa \quad 
	\Pdddyxb \quad 
	\Pdddyxx}
$$
where we have $\{1,2,4,5\}$, ${\bar S '}=\{1,3\}$, and $\{1,2,4,5\}$, ${\bar S '}=\{2,3\}$, and $\{1,2,4,5\}$, ${\bar S '}=\{1,2,3\}$, respectively.

The coproduct
\allowdisplaybreaks{
\begin{align*}
	\lefteqn{\Delta_\lambda(ddydy) 	=\sum_{S \subseteq [5] \atop S\ {\rm{adm}}} w_S \otimes w_{\bar S} 
	+  \sum_{S \subseteq [5] \atop S\ {\rm{adm}}} \sum_{J=\{j_1 < \cdots < j_p\} \subset S \atop {J \neq \emptyset,\ J \cap N^2_{ddydy} = \emptyset \atop j_p < 3}} \lambda^{|J|}  w_S \otimes w_{[5] \backslash (S \backslash J)}}
\\
				&= ddydy \otimes \be + \be \otimes ddydy 
					+ 3 dy \otimes ddy + 3 ddy \otimes dy + dddy \otimes y + y \otimes dddy\\
				& \quad	+ 2 \lambda dy \otimes dddy 
					+ 4 \lambda ddy \otimes ddy 
					+ 2 \lambda dddy \otimes dy	+ \lambda^2 ddy \otimes dddy	
					+ \lambda^2 dddy \otimes ddy.
\end{align*}}
Again, the first two terms on the right-hand side correspond to $S=[n]$ and $S=\emptyset$, respectively.

%%%%%%%%%%%%%%%%%%%%%%%%%%%

\section{Renormalization of regularized MZVs}
\label{sect:Renorm}

Alain Connes and Dirk Kreimer discovered a Hopf algebraic approach to the BPHZ renormalization method in perturbative quantum field theory \cite{Connes00,Connes01}. See \cite{Manchon08} for a review. One of the fundamental results of these seminal works is the formulation of the process of perturbative renormalization in terms of a factorization theorem for regularized Hopf algebra characters. We briefly recall this theorem, and apply it in the context of the Hopf algebra introduced on $t$- and $q$-regularized MPLs, when considering them at non-positive arguments.

%%%%%%%%%%%%%%%%%%%%%%%%%%%

\subsection{Connes--Kreimer Renormalization in a Nutshell}
\label{ssect:CKrenorm}

We regard the commutative algebra $\cA:=\bQ[z^{-1},z]]$ with the renormalization scheme 
$\cA = \cA_{-}\oplus \cA_{+},$ where $\cA_{-}:=z^{-1}\bQ[z^{-1}]$ and $\cA_{+}:=\bQ[[z]]$. 

On $\cA$ we define the corresponding projector $\pi: \cA \to \cA_{-}$ by 
\begin{align*}
 \pi\left(\sum_{n=-k}^\infty a_n z^n \right):= \sum_{n=-k}^{-1}a_n z^n
\end{align*}
with the common convention that the sum over the empty set is zero. Then $\pi$ and $\Id-\pi: \cA \to \cA_{+}$ are Rota--Baxter operators of weight $-1$. See e.\,g. \cite{Connes00,Ebrahimi02,Ebrahimi07}.

Let $(\cH,m_{\cH},\Delta)$ be a bialgebra and $(\cA,m_{\cA})$ an algebra. Then we define the convolution product $\star\colon \Hom(\cH,\cA) \otimes \Hom(\cH,\cA) \to \Hom(\cH,\cA)$ by the composition 
\begin{align*}
 \cH \stackrel{\Delta}{\longrightarrow}\cH\otimes \cH \stackrel{\varphi\otimes \psi}{\longrightarrow} \cA\otimes \cA \stackrel{m_\cA}{\longrightarrow}\cA
\end{align*}
for $\varphi,\psi \in \Hom(\cH,\cA)$, or in Sweedler's notation
\begin{align*}
 (\varphi \star \psi) (x):= m_{\cA}(\varphi\otimes \psi)\Delta(x)=\sum_{(x)}\varphi(x_1)\psi(x_2).
\end{align*}

The Connes--Kreimer Hopf algebra approach unveiled a beautiful encoding of one of the key concepts of the renormalization process, i.e., Bogoliubov's counter term recursion, in terms of an algebraic Birkhoff decomposition: 

\begin{theorem}[\cite{Connes00},\cite{Connes01},\cite{Manchon08},\cite{Ebrahimi07a}]
\label{theo:ConKre}
Let $(\cH,m_{\cH},\Delta)$ be a connected filtered Hopf algebra and  $\cA$ a commutative unital algebra equipped with a renormalization scheme $\cA=\cA_{-}\oplus \cA_{+}$ and corresponding idempotent Rota--Baxter operator $\pi$, where $\cA_{-}=\pi(\cA)$ and $\cA_{+}=(\Id-\pi)(\cA)$. Further let $\phi\colon \cH \to \cA$ be a Hopf algebra character. Then: 
\allowdisplaybreaks{
 \begin{enumerate}[a)]
  \item The character $\phi$ admits a unique decomposition 
  \begin{align}
  \label{Birkhoff}
   \phi=\phi_{-}^{\star{(-1)}}\star \phi_{+}
  \end{align}
  called \emph{algebraic Birkhoff decomposition}, in which $\phi_{-}\colon \cH \to \bQ \oplus \cA_{-}$ and $\phi_{+}\colon \cH \to \cA_{+}$ are characters. 
 \item The maps $\phi_{-}$ and $\phi_{+}$ are recursively given fixed point equations
 \begin{align}
  \phi_{-} & = e -\pi\left(\phi_{-} \star (\phi - e) \right),  		\label{Bogo1}\\
  \phi_{+} & = e + (\Id-\pi)\left(\phi_{-}\star (\phi - e\right)),  	\label{Bogo2}
 \end{align}
 where the unit for the convolution algebra product is $e=\eta_{\cA} \circ \varepsilon$, and $\eta_{\cA}:  \bQ \to \cA$ is the unit map of the algebra $\cA$. 
  \end{enumerate}}
\end{theorem}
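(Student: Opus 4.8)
The plan is to construct $\phi_-$ and $\phi_+$ explicitly from the stated recursions, verify that they are characters with the prescribed ranges, read off the factorization, and then settle uniqueness. Throughout I would use that $\cH$ is connected filtered: writing $\cH'=\ker\varepsilon$ for the augmentation ideal, every $x\in\cH'$ has a reduced coproduct
$$
\Delta(x)=\be\otimes x+x\otimes\be+\sum_{(x)}x'\otimes x''
$$
with $x',x''\in\cH'$ of strictly lower filtration degree. This is the engine that makes the recursions terminate and every induction on degree go through. I also recall that the characters $\cH\to\cA$ form a group under the convolution $\star$, with unit $e=\eta_\cA\circ\varepsilon$ and inverse given by precomposition with the antipode (which exists since $\cH$ is connected filtered).

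First I would define $\phi_-$ by $\phi_-(\be)=1$ and, on $\cH'$, by the fixed point equation \eqref{Bogo1}, which unwinds to
$$
\phi_-(x)=-\pi\Big(\phi(x)+\sum_{(x)}\phi_-(x')\,\phi(x'')\Big).
$$
Because the $x'$ appearing here have strictly smaller degree than $x$, this determines $\phi_-$ uniquely by induction on degree, and the outer $\pi$ immediately yields $\phi_-(\cH')\subseteq\cA_-=\im\pi$, so $\phi_-\colon\cH\to\bQ\oplus\cA_-$. I would then set $\phi_+:=\phi_-\star\phi$. Since $(\phi-e)(\be)=0$, one gets $\phi_+=\phi_-+\phi_-\star(\phi-e)$, hence on $\cH'$, writing $B:=\big(\phi_-\star(\phi-e)\big)(x)$, we have $\phi_+(x)=\phi_-(x)+B=(\Id-\pi)(B)$, which is exactly \eqref{Bogo2} and shows $\phi_+(\cH')\subseteq\cA_+=(\Id-\pi)(\cA)$, while $\phi_+(\be)=1\in\cA_+$. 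The factorization \eqref{Birkhoff} is then immediate, as $\phi_+=\phi_-\star\phi$ rearranges to $\phi=\phi_-^{\star(-1)}\star\phi_+$.

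The main obstacle is to prove that $\phi_-$ is multiplicative; this is the only place where the Rota--Baxter weight $-1$ property of $\pi$ is essential. From that identity, $\pi(a)\pi(b)=\pi\big(\pi(a)b+a\pi(b)-ab\big)$, one first checks that $\cA_-$ and $\cA_+$ are (non-unital) subalgebras of $\cA$. The identity $\phi_-(uv)=\phi_-(u)\phi_-(v)$ is then proved by induction on $\wt(u)+\wt(v)$: one expands $\Delta(uv)=\Delta(u)\Delta(v)$ inside the recursion and regroups the cross-terms, whereupon the Rota--Baxter relation is precisely what lets the products $\pi(\cdots)\pi(\cdots)$ be reabsorbed so that the two sides agree modulo lower-degree terms controlled by the inductive hypothesis (this is the computation carried out in \cite{Manchon08,Ebrahimi07a}). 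Once $\phi_-$ is known to be a character, $\phi_+=\phi_-\star\phi$ is automatically a character because $\cA$ is commutative: the convolution of two algebra morphisms into a commutative target is again an algebra morphism, the commutativity being exactly what allows the middle factors to be reordered after applying $\Delta(uv)=\Delta(u)\Delta(v)$.

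Finally, for uniqueness, suppose $\phi=\psi_-^{\star(-1)}\star\psi_+$ is a second such decomposition with $\psi_-\colon\cH\to\bQ\oplus\cA_-$ and $\psi_+\colon\cH\to\cA_+$ characters. Then $\psi_-\star\phi_-^{\star(-1)}=\psi_+\star\phi_+^{\star(-1)}$ in the character group. Since $\cA_-$ and $\cA_+$ are subalgebras and the antipode preserves $\cH'$, the characters valued in $\bQ\oplus\cA_-$ and those valued in $\cA_+$ each form a subgroup; thus the common value above is a character lying in both. Evaluating on $\cH'$ and using the directness $\cA_-\cap\cA_+=\{0\}$ forces it to be $e$, whence $\psi_\pm=\phi_\pm$. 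This completes the plan; the single genuinely delicate point is the Rota--Baxter computation establishing multiplicativity of $\phi_-$, everything else being bookkeeping organized by the connected filtered structure.
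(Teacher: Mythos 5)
The paper never proves Theorem \ref{theo:ConKre}: it is imported wholesale from \cite{Connes00,Connes01,Manchon08,Ebrahimi07a}, so the only meaningful comparison is with the standard argument in those references, and your proposal reconstructs that argument faithfully. Your bookkeeping is correct throughout: the recursion \eqref{Bogo1} is well defined by induction on the filtration degree and forces $\phi_-(\cH')\subseteq\cA_-$; setting $\phi_+:=\phi_-\star\phi$ yields \eqref{Bogo2} and hence \eqref{Birkhoff}; multiplicativity of $\phi_+$ does follow from that of $\phi_-$, since the convolution of two characters with commutative target is again a character; and your uniqueness argument is sound, including the observation that the idempotent Rota--Baxter property makes $\cA_\pm$ closed under multiplication, so that the two families of characters form subgroups and the directness $\cA_-\cap\cA_+=\{0\}$ kills the discrepancy on $\cH'$ (note you also implicitly need $1\in\cA_+$ for $\phi_+$ to be a character valued in $\cA_+$, which holds for the paper's scheme $\cA_+=\bQ[[z]]$).

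The one genuine gap is the step you yourself flag as the heart of the theorem: multiplicativity of $\phi_-$. You describe the mechanism correctly but then defer to ``the computation carried out in \cite{Manchon08,Ebrahimi07a}'' --- the very sources of the theorem you are asked to prove --- so as a standalone argument this is circular, and the proof is incomplete until that computation is executed. Concretely, it can be closed as follows. Set $\bar\phi:=\phi_-\star(\phi-e)$ (Bogoliubov's preparation map), so that $\phi_-(x)=-\pi\bigl(\bar\phi(x)\bigr)$ for $x\in\cH'$. Prove, by induction on the filtration degree and simultaneously with multiplicativity of $\phi_-$, that for all $x,y\in\cH'$
\begin{align*}
\bar\phi(xy)=\bar\phi(x)\bar\phi(y)+\phi_-(x)\bar\phi(y)+\bar\phi(x)\phi_-(y);
\end{align*}
here one expands $\Delta(xy)=\Delta(x)\Delta(y)$, uses multiplicativity of $\phi$, commutativity of $\cA$, and the induction hypothesis applied to the lower-degree products $x_1y_1$, the dangerous term $\phi_-(xy)$ being harmless because it is multiplied by $(\phi-e)(\be)=0$. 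Then apply $-\pi$ to both sides and use the weight $-1$ Rota--Baxter identity $\pi(a)\pi(b)=\pi\bigl(\pi(a)b+a\pi(b)-ab\bigr)$ with $a=\bar\phi(x)$, $b=\bar\phi(y)$: the right-hand side becomes exactly $\phi_-(x)\phi_-(y)$, while the left-hand side is $\phi_-(xy)$. With this computation written out, your proposal is a complete and correct proof of the theorem.
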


%%%%%%%%%%%%%%%%%%%%%%%%%%%

\subsection{Renormalization of MZVs}
\label{ssect:renormMZV}

An important remark is in order. To improve readability we skip brackets in the notation of classes of words, that is, in the following a word $w$ stands for the class $[w]$.

\smallskip   

Let $k_1,\ldots,k_n\in \bN_0$. Then we define a map $\phi\colon \cH_0 \to \bQ[z^{-1},z]]$  by 
 \begin{align}\label{eq:defphi}
  d^{k_1}y\cdots d^{k_n}y \mapsto \phi(d^{k_1}y\cdots d^{k_n}y)(z) := \partial_z ^{k_1}[x \partial_z ^{k_2}[x \cdots \partial_z^{k_n}[ x ]] \cdots ](z),
 \end{align}
where $x(z):=\frac{e^z}{1-e^z}$.

\begin{lemma}\label{lem:characterphi}
The map $\phi\colon (\cH_0,\sho )\to (\bQ[z^{-1},z]],\cdot)$ is a Hopf algebra character. Furthermore, the following diagram commutes:
   \begin{center}
  \makebox[0pt]{
   \begin{xy}
   (0,20)*+{(\cH_0,\sho )}="a"; (40,20)*+{(\bQ[[t]],\cdot)}="b";
   (40,0)*+{(\bQ[z^{-1},z]],\cdot)}="d";
   {\ar "a";"b"}?*!/_4mm/{\z_t^\shuffle};
   {\ar "b";"d"}?*!/_8mm/{t\mapsto e^z};
   {\ar"a";"d"};?*!/_2mm/{\phi};
   \end{xy}}
 \end{center}
\end{lemma}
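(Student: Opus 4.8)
The plan is to realize $\phi$ as the composite $(t\mapsto e^z)\circ\z_t^\shuffle$ restricted to $\cH_0$, so that both assertions follow simultaneously: the diagram commutes by construction, and $\phi$ inherits multiplicativity from $\z_t^\shuffle$ (Lemma \ref{lem:characterJ}).

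First I would record the change-of-variables relation underlying the right-hand vertical arrow. Writing $t=e^z$ and $\tilde F(z):=F(e^z)$ for a function $F$ of $t$, the chain rule gives $\partial_z\tilde F(z)=e^zF'(e^z)=(t F'(t))|_{t=e^z}=\widetilde{\delta[F]}(z)$, so the substitution $t=e^z$ intertwines the Euler derivation $\delta=t\partial_t$ with $\partial_z$; moreover $y(t)=\frac{t}{1-t}$ becomes $y(e^z)=\frac{e^z}{1-e^z}=x(z)$. Since $J^{-1}=\delta$ (Proposition \ref{prop:RBOJ}), Lemma \ref{lem:Jiteration} yields, for $k_1,\dots,k_n\in\bN_0$,
\[
\z_t^\shuffle(d^{k_1}y\cdots d^{k_n}y)=\Li_{-k_1,\dots,-k_n}(t)=\delta^{k_1}[y\,\delta^{k_2}[y\cdots\delta^{k_n}[y]\cdots]](t).
\]
Inducting on the nesting depth and applying the intertwining relation turns every $\delta$ into $\partial_z$ and every $y$ into $x$, so substituting $t=e^z$ produces $\partial_z^{k_1}[x\,\partial_z^{k_2}[x\cdots\partial_z^{k_n}[x]\cdots]](z)=\phi(d^{k_1}y\cdots d^{k_n}y)(z)$. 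This is exactly the commutativity of the diagram.

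Next I would verify that $\phi$ really lands in $\bQ[z^{-1},z]]$. The key observation is that $\z_t^\shuffle$ takes values in rational functions of $t$ whose only pole is at $t=1$: $y=\frac{t}{1-t}$ is of this type, and both multiplication by $y$ and application of $\delta=t\partial_t$ preserve this class, so each $\Li_{-k_1,\dots,-k_n}(t)$ is rational with a pole only at $t=1$ and vanishing at $t=0$. Hence ``$t\mapsto e^z$'' is honest substitution into a rational function: as $1-e^z$ has a simple zero at $z=0$, the function $x(z)=-1+\frac1{1-e^z}$ and every expression built from it by multiplication and $\partial_z$ is meromorphic at $z=0$ with a pole only there, i.e.\ an element of $\bQ[z^{-1},z]]$ (with rational coefficients, the Bernoulli numbers entering via $\frac1{1-e^z}$).

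Finally I would assemble multiplicativity. By Lemma \ref{lem:characterJ} the map $\z_t^\shuffle$ is an algebra morphism, and it descends to $\cH_0$ because the generators of $\cL_-$ encode precisely the Leibniz rule $d(u\sho v)-du\sho v-u\sho dv$, which under $d\leftrightarrow\delta$ is respected since $\delta$ is a derivation; restricted to the rational functions with poles only at $t=1$, the substitution $t\mapsto e^z$ is a unital homomorphism into $\bQ[z^{-1},z]]$. Thus $\phi$, a composite of two unital algebra morphisms into a commutative algebra, is itself a unital algebra morphism, i.e.\ a Hopf algebra character. I expect the main obstacle to be conceptual rather than computational: one must be careful that ``$t\mapsto e^z$'' is \emph{not} term-by-term substitution into a power series (which would diverge) but substitution into the rational/meromorphic representative, and check that this genuinely lands in the Laurent algebra $\bQ[z^{-1},z]]$ — which is exactly where the regularization role of the variable $z$ is located.
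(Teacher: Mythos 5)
Your proposal is correct and follows essentially the same route as the paper: the paper's proof likewise observes that $\phi=\z^{\sh}_{e^z}$ (via the chain and product rules, i.e.\ the intertwining of $\delta=t\partial_t$ with $\partial_z$ under $t=e^z$) and concludes multiplicativity from the fact that $\phi$ is the composition of the algebra morphisms $\z_t^\shuffle$ and $t\mapsto e^z$. Your additional care — that the substitution must be performed on the rational representatives with pole only at $t=1$ rather than term-by-term in the power series, and that $\z_t^\shuffle$ descends to $\cH_0$ because $\delta$ is a derivation — merely makes explicit what the paper compresses into ``we easily obtain,'' and is a welcome sharpening rather than a departure.
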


\begin{proof}
From the chain and product rule of differentiation we easily obtain that $\phi = \z^{\sh}_{e^z}$. Furthermore, the evaluation maps $t\mapsto e^z$ and $\z_t^\shuffle$ are both algebra morphisms (see Lemma \ref{lem:characterJ}). Therefore $\phi$ is -- as a composition of multiplicative maps -- itself a character.   
\end{proof}

Next we apply Theorem \ref{theo:ConKre} to the character $\phi$ (Lemma \ref{lem:characterphi}). Then we define \emph{renormalized MZVs} $\zeta_+$ -- using the character $\phi_+$ with image in $\bQ[[z]]$ in the Birkhoff decomposition (\ref{Birkhoff}) of $\phi$ -- by 
\begin{align*}
 \zeta_+(-k_1,\ldots,-k_n):=\lim_{z\to 0} \phi_+(d^{k_1}y \cdots d^{k_n}y)(z)
\end{align*}
for $k_1,\ldots,k_n\in \bN_0, n\in \bN$. The first values of $\zeta_+$ in depth two are given in Table \ref{table1} (for an explicit calculation example see Example \ref{ex:reno}). 
Note that $\zeta_+$ respects the shuffle product $\sho $ as $\phi_+$ is a character with respect to the algebra $(\cH_0,\sho )$. However, note that the quasi-shuffle relations are not verified because it would require $\zeta_+(0,0)=\frac{3}{8}$.

\begin{table}
\begin{center}
\renewcommand{\arraystretch}{2}
\begin{tabular}{r|c|c|c|c}
  \backslashbox{$k_1$}{$k_2$}& $0$ & $-1$ & $-2$ & $-3$  \\
  \hline 
  $0$ & $\frac{1}{4}$ & $\frac{1}{24}$ & $0$ & $-\frac{1}{240} $ \\
  \hline
  $-1$ & $\frac{1}{12}$ & $\frac{1}{144}$ & $-\frac{1}{240}$ & $-\frac{1}{1440} $  \\
  \hline
  $-2$ & $ \frac{1}{72} $ & $-\frac{1}{240}$ & $-\frac{1}{720} $ & $\frac{1}{504} $  \\
  \hline
  $-3$ & $-\frac{1}{120}$ & $-\frac{1}{360} $ & $\frac{1}{504}$ & $\frac{107}{100800} $  \\
\end{tabular}
\end{center}
\caption{The renormalized MZVs $\zeta_+(k_1,k_2)$.}\label{table1}
 \end{table}

Next we show that the renormalized MZVs coincide with the meromorphic continuation of MZVs discussed in Section \ref{sect:mero}:

\begin{theorem}\label{theo:mero}
The renormalization procedure is compatible with the meromorphic continuation of MZVs, i.e., for $k\in \bN_0$
 \allowdisplaybreaks{
\begin{align}
\label{eq:mero1}
  \zeta_+(-k) = \z_1(-k)
\end{align}}
and for $a,b\geq 0$ with $a+b$ odd 
\begin{align}\label{eq:mero2}
  \zeta_+(-a,-b) = \z_2(-a,-b). 
\end{align}
\end{theorem}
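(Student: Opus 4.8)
The plan is to read off both identities as constant terms of Laurent expansions at $z=0$. By Lemma~\ref{lem:characterphi} the character $\phi$ equals $\z^{\sh}_{e^z}$, so $\phi(d^{k_1}y\cdots d^{k_n}y)(z)=\Li_{-k_1,\ldots,-k_n}(e^z)$. The computational engine is the expansion $\Li_{-k}(e^z)=\partial_z^k x(z)$ with $x(z)=\frac{e^z}{1-e^z}=-\sum_{m\geq 0}\frac{B_m}{m!}z^{m-1}$, which follows at once from the Bernoulli generating series of Remark~\ref{rem:mero}. From the Birkhoff decomposition \eqref{Birkhoff} one has $\phi_+=\phi_-\star\phi$, and by \eqref{Bogo1} $\phi_-$ sends the augmentation ideal into $\cA_-=z^{-1}\bQ[z^{-1}]$, hence contributes no constant term; thus for any admissible $w$ in the augmentation ideal
\begin{equation*}
  \lim_{z\to 0}\phi_+(w)(z)=[z^0]\Big(\phi(w)+\sum_{(w)}\phi_-(w')\phi(w'')\Big),
\end{equation*}
where $[z^0](\cdot)$ denotes the coefficient of $z^0$ and $\tilde\Delta_0(w)=\sum_{(w)}w'\otimes w''$ is the reduced coproduct.

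For \eqref{eq:mero1} I would use that $d^ky$ is primitive (Example~\ref{ex:coproduct}), so the sum above is empty and $\zeta_+(-k)=[z^0]\Li_{-k}(e^z)$. The polar part of $x$ differentiates to poles only, so the constant term of $\partial_z^k x$ comes solely from the coefficient of $z^k$ in $x$, namely $k!\cdot\big(-\frac{B_{k+1}}{(k+1)!}\big)=-\frac{B_{k+1}}{k+1}=\z_1(-k)$ by Remark~\ref{rem:mero}.

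For \eqref{eq:mero2} the first step is the reduced coproduct of $w=d^ayd^by$, which from \eqref{lambdaCoproduct} (checked against Example~\ref{ex:coproduct}) reads
\begin{equation*}
  \tilde\Delta_0(d^ayd^by)=\sum_{j=0}^a\binom{a}{j}\big(d^{a+b-j}y\otimes d^jy+d^jy\otimes d^{a+b-j}y\big).
\end{equation*}
The depth-one computation gives $\phi_-(d^my)(z)=(-1)^m m!\,z^{-m-1}$ as the principal part of $\partial_z^m x$. Each counterterm $\phi_-(d^My)\,\phi(d^{j'}y)$ has $M+j'=a+b$, so its constant term is proportional to the coefficient of $z^{M+1}$ in $\Li_{-j'}(e^z)$, which one computes to be a multiple of $B_{a+b+2}$. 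Since $a+b$ is odd, $a+b+2$ is odd and $\geq 3$, whence $B_{a+b+2}=0$ and \emph{all counterterms vanish}. Therefore $\zeta_+(-a,-b)=[z^0]\Li_{-a,-b}(e^z)=a!\,[z^a]\big(x\,\partial_z^b x\big)$, which after collecting factorials becomes the convolution $\frac1{a+1}\sum_{i=0}^{a+1}\binom{a+1}{i}\frac{B_i B_{a+b+2-i}}{a+b+2-i}$ when $b\geq 1$, and the plain convolution $\frac{1}{(a+1)(a+2)}\sum_{i}\binom{a+2}{i}B_iB_{a+2-i}$ when $b=0$.

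The hard part — and the conceptual heart of the argument — will be evaluating this convolution, which I would do by a parity argument rather than by brute force. Since $N:=a+b+1$ is even, $B_{a+b+2-i}$ is nonzero only for $i$ odd, while $B_i$ is nonzero only for $i$ even or $i=1$; within the summation range both hold exactly at $i=1$ when $b\geq 1$, producing the single term $\frac12\frac{B_N}{N}=\z_2(-a,-b)$. When $b=0$ the range is longer and a second admissible index $i=a+1$ survives (contributing a further factor $B_1$), which doubles the result to $\frac{B_N}{N}=\z_2(-a,0)$; this is precisely the factor $1+\delta_0(b)$ of Remark~\ref{rem:mero}. The main technical care needed is the bookkeeping of Laurent coefficients — in particular the role of the $z^{-1}$ term of $x$, which survives the convolution only when $b=0$ — and the verification that the parity constraints leave exactly the claimed terms.
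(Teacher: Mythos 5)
Your proof is correct and takes essentially the same route as the paper's: both expand $x(z)=e^z/(1-e^z)$ in Bernoulli numbers, use primitivity of $d^ky$ for depth one, kill every counterterm $\phi_{-}(d^cy)\phi(d^dy)$ with $c+d=a+b$ via $B_{a+b+2}=0$, and identify the surviving constant term with $\frac12\bigl(1+\delta_0(b)\bigr)\frac{B_{a+b+1}}{a+b+1}=\z_2(-a,-b)$ through a Bernoulli parity analysis (your indices $i=1$ and $i=a+1$ are exactly the paper's Cases 2 and 3). The only presentational differences are your explicit binomial formula for $\tilde{\Delta}_0(d^ayd^by)$, where the paper instead invokes weight-homogeneity of the reduced coproduct, and your packaging of the paper's term-by-term case analysis as a single convolution sum.
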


Note that for $\dpt(w)>2$ there is no information form the meromorphic continuation (see Remark \ref{rem:mero}). 

\begin{proof} 
We begin with the proof of \eqref{eq:mero1}. From Equation \eqref{eq:defphi} we obtain 
 \allowdisplaybreaks{
\begin{align*}
 \phi(d^ky)(z) 
 & = \partial_z^k\left(\frac{e^z}{1-e^z}\right) \\
 & = - \partial_z^k\left(\frac{1}{z}\frac{ze^z}{e^z-1} \right) \\
 & = - \partial_z^k\left( \frac{B_0}{z} + \sum_{n \geq 0}\frac{B_{n+1}}{(n+1)!}z^n \right)\\
 & = -\left(\frac{(-1)^{k}k! B_0}{z^{k+1}} + \sum_{n\geq 0} \frac{B_{n+k+1}}{n+k+1}\frac{1}{n!}z^{n}\right). 
\end{align*}}

Since $d^ky\in Y$ is a primitive element for the coproduct $\Delta_0$ we obtain with Remark \ref{rem:mero} that
\begin{align*}
 \phi_{+}(d^ky)(z) =(\Id-\pi)\phi(z) =  -\frac{B_{k+1}}{k+1} + O(z) = \z_1(-k) + O(z). 
\end{align*}
For Equation \eqref{eq:mero2} we calculate for $a+b$ odd with $a,b\geq 0$ (see Remark \ref{rem:mero})
{\allowdisplaybreaks
\begin{align*}
 \phi(d^ayd^by)(z) 
  =&~\partial_z^a\left[x(z)\partial_z^b\left[x(z)\right] \right] \\
  =&~\partial_z^a\left[\left(\frac{B_0}{z}+\sum_{m\geq 0}\frac{B_{m+1}}{(m+1)!}z^{m}\right) \left( \frac{(-1)^{b}b! B_0}{z^{b+1}} + \sum_{n\geq 0} \frac{B_{n+b+1}}{n+b+1}\frac{1}{n!}z^{n} \right) \right] \\
  =&~\partial_z^a\left[ \text{pole part~}+\sum_{n\geq 0}\frac{B_0B_{n+b+2}}{n+b+2}\frac{z^n}{(n+1)!} + \sum_{m\geq 0}\frac{(-1)^bb!B_0B_{m+b+2}}{(m+b+2)!}z^m \right. \\
   &~\left. +\sum_{l\geq 0} \sum_{\begin{smallmatrix} n+m=l \\ n,m\geq 0 \end{smallmatrix}} \frac{B_{m+1}B_{n+b+1}}{n+b+1}\frac{z^l}{n!(m+1)!} \right] \\
     =&~\text{pole part~} +\frac{B_0B_{a+b+2}}{(a+b+2)(a+1)} + (-1)^b\frac{a!b!B_0B_{a+b+2}}{(a+b+2)!} \\
     &~+\sum_{\begin{smallmatrix} n+m=a \\ n,m\geq 0  \end{smallmatrix}} \frac{B_{m+1}B_{n+b+1}}{(m+1)!(n+b+1)}\frac{a!}{n!} + O(z). 
\end{align*}}
The second and third summand are zero since $a+b+2\geq 3$ is an odd number and therefore $B_{a+b+2}=0$. We have three possibilities for the last sum to be different from zero: 
 \allowdisplaybreaks{
\begin{itemize}
 \item Case 1: $m+1$ and $n+b+1$ are even numbers. Then we have $m+n+b+2=a+b+2$ even, which contradicts that $a+b$ is odd. 
 \item Case 2: $m=0$. The last summand is equal to 
 \begin{align*}
  \frac{B_1B_{a+b+1}}{a+b+1}. 
 \end{align*}
 \item Case 3: $n+b=0$. Then $n=b=0$ and we have for the last summand
 \begin{align*}
  \frac{B_1B_{a+1}}{a+1}. 
 \end{align*}
\end{itemize}}
Therefore we obtain together with Remark \ref{rem:mero} that
 \allowdisplaybreaks{
\begin{align*}
 \phi(d^ayd^by)(z) & = \text{pole part~} +\frac{1}{2}\left(1+\delta_0(b) \right)\frac{B_{a+b+1}}{a+b+1}+ O(z)\\
 & = \text{pole part~} +\z_2(-a,-b)+ O(z). 
\end{align*}}
Let $s:=a+b$ be an odd number and $c,d\geq 0$ with $c+d=s$. Then we observe using the above calculation that
 \allowdisplaybreaks{
\begin{align*}
 \phi_{-}(p^cy)(z)\phi(p^dy)(z) & = \frac{(-1)^{c}c!B_0}{z^{c+1}} \left(  \frac{(-1)^{d}d! B_0}{z^{d+1}} + \sum_{n\geq 0}\frac{B_{n+d+1}}{n+d+1}\frac{1}{n!}z^n  \right)\\
 &= \text{pole part~} +(-1)^c\frac{ B_0B_{c+d+2}}{(c+1)(c+d+2)}+ O(z)
 \end{align*}}
 Since $c+d+2 = a+b+2\geq 3$ is an odd number $B_{c+d+2}=0$ and the constant term is zero. 
Hence, we obtain $\phi_{+}(d^ayd^by)(z) = \z_2(-a,-b) + O(z).$ Here we have used that $\tilde{\Delta}_0$ respects the weight graduation.
\end{proof}

In the light of Theorem \ref{theo:trivialrenorm} and Corollary \ref{cor:decomposition} we deduce a simple way to calculate the renormalized MPL, which is presented in 

\begin{corollary}
\label{cor:primitive-renormalization}
For $w \in Y$, $\dpt(w)>1$
\begin{align}
\label{simple-renormal}
  \phi_{+}(w)= \frac{1}{2^{\dpt(w)}-2} \sum_{(w)}  \phi_{+}(w') \phi_{+}(w'').
\end{align} 
\end{corollary}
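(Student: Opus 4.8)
The plan is to obtain \eqref{simple-renormal} as the image under the character $\phi_+$ of the shuffle--factorization identity already recorded in Corollary~\ref{cor:decomposition}, specialized to $\lambda=0$. Two ingredients are needed: first, that $\phi_+$ is an algebra morphism from $(\cH_0,\sho)$ to $(\bQ[[z]],\cdot)$; second, the identity $(2^{\dpt(w)}-2)\,w=\sum_{(w)}w'\sho w''$ valid in $\cH_0$. Granting these, the corollary is immediate.

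For the first ingredient, recall that $\phi\colon(\cH_0,\sho)\to\bQ[z^{-1},z]]$ is a character by Lemma~\ref{lem:characterphi}, and that Theorem~\ref{theo:ConKre}~a) yields the Birkhoff decomposition $\phi=\phi_{-}^{\star(-1)}\star\phi_{+}$ in which $\phi_{+}$ is again a character, now with values in $\cA_{+}=\bQ[[z]]$. In particular $\phi_{+}$ is multiplicative, $\phi_{+}(a\sho b)=\phi_{+}(a)\,\phi_{+}(b)$ for all $a,b\in\cH_0$, which is all that the argument uses about $\phi_+$.

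For the second ingredient, I would specialize Corollary~\ref{cor:decomposition} to $\lambda=0$, so that $\shl$ becomes $\sho$ and $\tdl$ becomes the reduced coproduct $\tilde\Delta_0$. Writing $\tilde\Delta_0(w)=\sum_{(w)}w'\otimes w''$, this reads
\[
(2^{\dpt(w)}-2)\,w \;=\; \sho\circ\tilde\Delta_0(w)\;=\;\sum_{(w)} w'\sho w''
\]
in $\cH_0$ (that is, modulo $\cT_-+\cL_-$). Applying $\phi_+$ to both sides and using multiplicativity gives $(2^{\dpt(w)}-2)\,\phi_{+}(w)=\sum_{(w)}\phi_{+}(w')\,\phi_{+}(w'')$. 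Since the hypothesis $\dpt(w)>1$ forces $2^{\dpt(w)}-2\geq 2\neq 0$, I may divide by this scalar to obtain \eqref{simple-renormal}.

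There is essentially no real obstacle here; the only points requiring care are to invoke the facts in the correct order — that $\phi_+$ inherits multiplicativity from the Birkhoff decomposition, and that the hypothesis $\dpt(w)>1$ is precisely what makes the scalar $2^{\dpt(w)}-2$ invertible. It is worth emphasizing, for the ``simple way to calculate'' claim, that $\tilde\Delta_0$ takes values in $\bigoplus_{p+q=\dpt(w),\,p,q\neq0}\cH_{(p)}\otimes\cH_{(q)}$, so each $w'$ and $w''$ has depth strictly smaller than $\dpt(w)$; hence \eqref{simple-renormal} is a genuine recursion on depth, expressing $\phi_{+}(w)$ through values of $\phi_+$ at lower depth, with the base of the recursion supplied by the primitive (depth-one) elements treated in the proof of Theorem~\ref{theo:mero}.
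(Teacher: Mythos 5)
Your proof is correct and follows essentially the same route as the paper: both apply the character $\phi_{+}$ (obtained from the algebraic Birkhoff decomposition) to the shuffle factorization identity $(2^{\dpt(w)}-2)\,w=\sum_{(w)}w'\sho w''$ of Corollary~\ref{cor:decomposition}, and divide by the nonzero scalar. Your added remarks on the invertibility of $2^{\dpt(w)}-2$ for $\dpt(w)>1$ and on the strict drop in depth of $w'$, $w''$ simply make explicit points the paper treats as immediate.
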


Note that both $\dpt(w')$ and $\dpt(w'')$ are strictly smaller than $\dpt(w)$. On the right-hand side of (\ref{simple-renormal}) one can continue to apply Theorem \ref{theo:trivialrenorm} to the words $w',w''$, until $w$ has been fully decomposed into primitive elements. The renormalization of $w$ respectively the corresponding MZV reduce to the simple renormalization of single MPLs at non-positive arguments corresponding to primitive words in $\cH$. For example
$$
	\phi_{+}(dyd^ny) = \phi_{+}(y)\phi_{+}(d^{n+1}y) + \phi_{+}(dy)\phi_{+}(d^{n}y). 
$$

\begin{proof}[Proof of Corollary \ref{cor:primitive-renormalization}]
Statement (\ref{simple-renormal}) follows directly from Theorem \ref{theo:trivialrenorm}, since $\phi_{+}$ is a character by construction. See (\ref{Bogo2}) in Theorem \ref{theo:ConKre}. Observe that (\ref{simple-renormal}) it is compatible with (\ref{Bogo2}), since ${(\Id-\pi)\phi_{+}=\phi_{+}}$.
\end{proof}

\begin{example}\label{ex:reno}{\rm{
Let us calculate the renormalized MZVs $\zeta_+(0,-2)$ and $\zeta_+(-1,-1)$. 
From Example \ref{ex:coproduct} we find
\allowdisplaybreaks{
\begin{align*}
 \tilde{\Delta}_0(yd^2y) = y\otimes d^2y + d^2y \otimes y \hspace{0.5cm}\text{and}\hspace{0.5cm} \tilde{\Delta}_0(dydy) = y\otimes d^2y + d^2y\otimes y + 2dy \otimes dy. 
\end{align*}}
Therefore we obtain form the iterative formulas \eqref{Bogo1} and  \eqref{Bogo2} of Theorem \ref{theo:ConKre}
\allowdisplaybreaks{
\begin{align*}
 \phi_+(yd^2y)  &= (\Id-\pi)\left[\phi(yd^2y) - \left(\phi_{-}(y)\phi(d^2y) + \phi_{-}(d^2y) \phi(y) \right) \right], \\
 \phi_+(dydy) &=(\Id-\pi)\left[ \phi(dydy) - \left(\phi_{-}(y)\phi(d^2y) + \phi_{-}(d^2y) \phi(y) + 2 \phi_{-}(dy)\phi(dy) \right)\right]. 
\end{align*}}
Using 
\allowdisplaybreaks{
   \begin{align*}
  \phi(y)(z) & =-{z}^{-1}-{\frac {1}{2}}-{\frac {1}{12}}z+{\frac {1}{720}}{z}^{3}+  O\!\left( {z}^{4} \right), \\
  \phi(dy)(z)& ={z}^{-2}-{\frac {1}{12}}+{\frac {1}{240}}{z}^{2}+O\!\left( {z}^{4} \right), \\
  \phi(d^2y)(z)& =-2\,{z}^{-3}+{\frac {1}{120}}z-{\frac {1}{1512}}{z}^{3}+ O\!\left( {z}^{4} \right) \\
 \end{align*}}
 and 
 \allowdisplaybreaks{
 \begin{align*}
 \phi(yd^2y)(z)&=  2\,{z}^{-4}+{z}^{-3}+\frac{1}{6}\,{z}^{-2}-{\frac {1}{90}}
-{\frac {1}{240}}\,z+{\frac {1}{30240}}\,{z}^{2}+{\frac {1}{3024}}\,{z
}^{3}+O\left( {z}^{4} \right),  \\
\phi(dydy)(z)&=3\,{z}^{-4}+{z}^{-3}+{\frac {1}{240}}-{\frac {1}{240}}z-{\frac {1}{1008}}{z}^{2}+{\frac {1}{3024}}{z}^{3}+O\!\left( {z}^{4} \right),
\end{align*}}
we observe that
\begin{align*}
 (\Id-\pi)\left[\phi_{-}(y)\phi(d^2y) + \phi_{-}(d^2y) \phi(y)\right](z) &= -\frac{1}{90} + O(z)
\end{align*}
and 
\begin{align*}
 (\Id-\pi)\left[\phi_{-}(y)\phi(d^2y) + \phi_{-}(d^2y) \phi(y)  + 2 \phi_{-}(dy)\phi(dy) \right](z) &= -\frac{1}{360} + O(z).
\end{align*}
Hence,
\begin{align*}
 \phi_+(yd^2y)(z) = O(z) \hspace{1cm} \text{and}\hspace{1cm} \phi_+(dydy)(z) = \frac{1}{240}+\frac{1}{360}+O(z)=\frac{1}{144} + O(z),
\end{align*} 
which results in $\zeta_+(0,-2)=0$ and $\zeta_+(-1,-1)=\frac{1}{144}$. 

Alternatively, we can use the shuffle product to calculate $\zeta_+(0,-2)$ and $\zeta_+(-1,-1)$. Note that since $y\sho  d^2y = yd^2y$ we have $\zeta_+(0,-2)=\zeta_+(0)\zeta_+(-2)=0$. 
Because of $dy\sho  dy = dydy - yd^2y$ we see that $\zeta_+(-1,-1)=\zeta_+(0,-2) + \zeta_+(-1)^2= \frac{1}{144}$.  

A third way to calculate, say, $\zeta_+(-1,-1)$, is based on Corollary \ref{cor:decomposition}, and described in Corollary \ref{cor:primitive-renormalization}: 
\begin{align*}
 \zeta_{+}(-1,-1) = \frac{1}{2}\left( \zeta_+(0)\zeta_+(-2)+ \zeta_+(-2)\zeta_+(0) + 2\zeta_+(-1)\zeta_+(-1) \right) =\frac{1}{144}.
\end{align*}}}
\end{example}

%%%%%%%%%%%%%%%%%%%%%%%%%%%

\subsection{Renormalization of $q$MZVs}
\label{ssect:renormqMZV}

The Hopf algebra $(\cH,\shm ,\Delta_{-1})$ is related to the modified $q$-analogue $\ofz_q$ whereas the relation between MZVs and $q$MZVs (see Equation \eqref{eq:lim}) relies on a limit process involving the non-modified $q$MZVs $\fz_q$. Therefore the renormalization related to the $q$MZV deformation is more involved than the renormalization described in the previous section. First of all we apply Theorem \ref{theo:ConKre} in the framework of modified $q$MZVs. We define the map $\psi\colon (\cH,\shm )\to \bQ[z^{-1},z]]$ by 
\begin{align*}
 d^{k_1}y\cdots d^{k_n}y \mapsto \psi(d^{k_1}y\cdots d^{k_n}y)(z):=\sum_{m_1,\ldots,m_n\geq 0}\frac{B_{m_1}}{m_1!}\cdots \frac{B_{m_n}}{m_n!} \cdot C^{k_1,\ldots,k_n}_{m_1,\ldots,m_n} z^{m_1+\cdots+m_n-n}
\end{align*}
for $k_1,\ldots,k_n\in \bN_0$, where 
\begin{align*}
 C^{k_1,\ldots,k_n}_{m_1,\ldots,m_n}:=  \sum_{l_i=0 \atop i=1,\ldots,n}^{k_i}\left(\prod_{i=1}^n\binom{k_i}{l_i}(-1)^{l_i+1}(l_1+\cdots+l_i+1)^{m_i-1} \right). 
\end{align*}

\begin{lemma}\label{lem:qcharacter}
 The map $\psi\colon (\cH,\shm )\to (\bQ[z^{-1},z]],\cdot)$ is a character. Furthermore, the following diagram commutes: 
  \begin{center}
  \makebox[0pt]{
   \begin{xy}
   (0,20)*+{(\cH,\shm )}="a"; (40,20)*+{(\bQ[[q]],\cdot)}="b";
   (40,0)*+{(\bQ[z^{-1},z]],\cdot)}="d";
   {\ar "a";"b"}?*!/_4mm/{\ofz_q^\shuffle};
   {\ar "b";"d"}?*!/_8mm/{q\mapsto e^z};
   {\ar"a";"d"};?*!/_2mm/{\psi};
   \end{xy}}
 \end{center}
\end{lemma}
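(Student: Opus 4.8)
The plan is to imitate the proof of Lemma~\ref{lem:characterphi}: first verify that the diagram commutes, i.e.\ that $\psi$ is obtained from $\ofz_q^\shuffle$ by the substitution $q\mapsto e^z$, and then conclude that $\psi$ is a character because it is a composite of two multiplicative maps. In the Hopf algebra $\cH$ with $\lambda=-1$ the letter $d$ stands for $p^{-1}$, so the admissible word $d^{k_1}y\cdots d^{k_n}y$ represents the modified $q$MZV $\ofz_q(-k_1,\ldots,-k_n)$; moreover the $\{d,y\}$-words form a subalgebra of $(\langle Y_{-1}\rangle_\bQ,\shm)$ on which $\ofz_q^\shuffle$ restricts to a character by \cite{Castillo13b}. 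Hence the whole statement reduces to matching the explicit series defining $\psi$ with the $z$-expansion of $\ofz_q(-k_1,\ldots,-k_n)$ at $q=e^z$.

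For the core identity I would begin from Lemma~\ref{lem:characterq} together with $P_q^{-1}=D_q$ (Proposition~\ref{prop:MRBO}(iii)) and the elementary relation $(1-q)^{k_i}[m_i]_q^{k_i}=(1-q^{m_i})^{k_i}$, which give the closed form
\[
	\ofz_q(-k_1,\ldots,-k_n)=\sum_{m_1>\cdots>m_n>0}q^{m_1}\prod_{i=1}^n(1-q^{m_i})^{k_i}.
\]
Substituting $q=e^z$ (valid for $\RE(z)<0$), expanding each factor binomially as $(1-e^{zm_i})^{k_i}=\sum_{l_i=0}^{k_i}\binom{k_i}{l_i}(-1)^{l_i}e^{zm_il_i}$, and interchanging the finite $l$-sums with the nested $m$-sum, the exponent attached to a fixed multi-index becomes $z\big(m_1(1+l_1)+\sum_{i\geq 2}m_il_i\big)$. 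Writing $m_i=a_i+\cdots+a_n$ with $a_j\geq 1$ converts the simplex constraint into independent geometric sums and rewrites the exponent as $z\sum_{j=1}^n(l_1+\cdots+l_j+1)a_j$, so the nested sum factorizes into $\prod_{j=1}^n\frac{e^{z(l_1+\cdots+l_j+1)}}{1-e^{z(l_1+\cdots+l_j+1)}}$.

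It then remains to feed in the Bernoulli generating series $\frac{e^w}{1-e^w}=-\sum_{m\geq 0}\frac{B_m}{m!}w^{m-1}$ with $w=z(l_1+\cdots+l_j+1)$, to expand the product over $j$, and to gather the power $z^{m_1+\cdots+m_n-n}$; interchanging the $l$- and $m$-summations one reads off the coefficient $\sum_l\prod_i\binom{k_i}{l_i}(-1)^{l_i+1}(l_1+\cdots+l_i+1)^{m_i-1}=C^{k_1,\ldots,k_n}_{m_1,\ldots,m_n}$, which is exactly the defining formula of $\psi$. Since $B_0=1$, each index with $m_j=0$ produces a simple pole, so the output lies in $\bQ[z^{-1},z]]$ with a pole of order at most $n$, and the diagram commutes. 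Finally, the image of $\ofz_q^\shuffle$ consists of rational functions of $q$ that are regular away from roots of unity, whence $q\mapsto e^z$ followed by Laurent expansion at $z=0$ is a genuine algebra morphism into $\bQ[z^{-1},z]]$; composing it with the character $\ofz_q^\shuffle$ shows that $\psi$ is a character, exactly as in Lemma~\ref{lem:characterphi}. I expect the main obstacle to be the combinatorial bookkeeping in the change of variables $m_i\mapsto a_j$ and the precise matching of the alternating $l$-summation with the definition of $C^{k_1,\ldots,k_n}_{m_1,\ldots,m_n}$, together with the check that the substitution $q=e^z$ really produces a Laurent series rather than an ill-defined expansion.
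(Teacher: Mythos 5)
Your proposal follows essentially the same route as the paper's proof: both start from the closed form $\ofz_q(-k_1,\ldots,-k_n)=\sum_{m_1>\cdots>m_n>0}q^{m_1}\prod_{i}(1-q^{m_i})^{k_i}$, convert the simplex sum into free sums by the telescoping change of variables, expand binomially, sum the resulting geometric series into a product of factors indexed by $N_j=l_1+\cdots+l_j+1$, insert the Bernoulli generating series after the substitution $q\mapsto e^z$, and read off the coefficients $C^{k_1,\ldots,k_n}_{m_1,\ldots,m_n}$, concluding in both cases that $\psi$ is a character because it is a composition of algebra morphisms. The only differences are cosmetic: you perform the substitution $q=e^z$ before the geometric summation (hence invoke $\RE(z)<0$ for convergence) while the paper stays formal in $q$ until the very end, and you add an explicit --- and welcome --- justification that the vertical arrow of the diagram is a genuine algebra morphism on the relevant subalgebra of rational functions of $q$, a point the paper leaves implicit.
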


\begin{proof}
 Let $k_1,\ldots,k_n\in \bN_0$. First we observe that
 \allowdisplaybreaks{
 \begin{align*}
  ~ & \ofz_q^{\sh}(d^{k_1}y\cdots d^{k_n}y) \\
  = & \sum_{m_1>\cdots > m_n >0} q^{m_1}(1-q^{m_1})^{k_1}(1-q^{m_2})^{k_2}\cdots(1-q^{m_n})^{k_n}  \\
  = & \sum_{m_1,\ldots, m_n >0} q^{m_1+\cdots+m_n}(1-q^{m_1+\cdots+ m_n})^{k_1}(1-q^{m_2+\cdots+m_n})^{k_2}\cdots(1-q^{m_n})^{k_n}  \\
  = & \sum_{l_1=0}^{k_1} \cdots \sum_{l_n=0}^{k_n}(-1)^{l_1+\cdots+l_n} \binom{k_1}{l_1} \cdots \binom{k_n}{l_n} \sum_{m_1,\ldots,m_n>0} q^{m_1(l_1+1)}q^{m_2(l_1+l_2+1)} \cdots q^{m_n(l_1+\cdots+l_n+1)} \\
  = & \sum_{l_1=0}^{k_1} \cdots \sum_{l_n=0}^{k_n}(-1)^{l_1+\cdots+l_n+n} \binom{k_1}{l_1} \cdots \binom{k_n}{l_n} \frac{q^{l_1+1}}{q^{l_1+1}-1} \cdots \frac{q^{l_1+\cdots+l_n+1}}{q^{l_1+\cdots+l_n+1}-1}. 
 \end{align*}}
This leads to 
\begin{align*}
   \ofz_q^{\sh}(d^{k_1}y\cdots d^{k_n}y)  \stackrel{q\mapsto e^z}{\longmapsto} & \sum_{l_i=0 \atop i=1,\ldots,n}^{k_i} \left(\prod_{j=1}^{n}(-1)^{l_j+1}  \binom{k_j}{l_j} \frac{e^{z(l_1+\cdots+l_j+1)}}{e^{z(l_1+\cdots+l_j+1)}-1} \right) \\
  = &  \sum_{l_i=0 \atop i=1,\ldots,n}^{k_i} \left(\prod_{j=1}^{n}(-1)^{l_j+1}  \binom{k_j}{l_j} \sum_{m_j\geq 0} \frac{B_{m_j}}{m_j!} (z(l_1+\cdots+l_j+1))^{m_j-1} \right)\\
  = & \sum_{m_1,\ldots, m_n\geq 0} \frac{B_{m_1}}{m_1!} \cdots \frac{B_{m_n}}{m_n!}\cdot C^{k_1,\ldots,k_n}_{m_1,\ldots,m_n} z^{m_1+\cdots +m_n-n}.
\end{align*}
The map $\psi$ is a character since it is a composition of algebra morphisms.
\end{proof}

Next we reverse the modification process applied in Equation \eqref{eq:modify}. Therefore we apply Theorem \ref{theo:ConKre} to the character $\psi$ and define the \emph{renormalized $q$MZVs} $\fz_{+}$ by 
\begin{align}\label{eq:limmod}
 \fz_{+}(-k_1,\ldots,-k_n):=\lim_{z \to 0} \frac{(-1)^{k_1+\cdots+k_n}}{z^{k_1+\cdots+k_n}}\psi_{+}(d^{k_1}y\cdots d^{k_n}y)(z)
\end{align}
for $k_1,\ldots,k_n\in \bN_0$. 

\begin{theorem}
\label{theo:renoqMZV}
 Let $k_1,\ldots,k_n\in \bN_0$. Then $\fz_{+}(-k_1,\ldots,-k_n)$ is well defined, and we have 
 \begin{align*}
  \fz_{+}(-k_1,\ldots,-k_n) = \zeta_+(-k_1,\ldots,-k_n). 
 \end{align*}
Especially, the renormalized $q$MZVs $\fz_{+}$ respect the shuffle product $\sho $. 
\end{theorem}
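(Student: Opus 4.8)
The plan is to prove $\fz_+=\zeta_+$ by showing that both quantities obey \emph{the same} depth-recursion reducing them to primitive (depth-one) elements, and that their depth-one values agree. This sidesteps the awkward fact that the two Birkhoff decompositions live over \emph{different} Hopf algebras ($\psi$ over $(\cH,\shm,\Delta_{-1})$, $\phi$ over $(\cH_0,\sho,\Delta_0)$): rather than compare $\psi_+$ and $\phi_+$ directly, I compare the recursions they induce after the modification-reversing rescaling. The multiplicativity of $\fz_+$ for $\sho$ will then be an automatic corollary of the identity $\fz_+=\zeta_+$ together with Lemma \ref{lem:characterphi}.

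First I would dispose of depth one, which also establishes well-definedness of (\ref{eq:limmod}) in the base case. Since $d^ky$ is primitive for $\Delta_{-1}$ (Example \ref{ex:coproduct}), the Bogoliubov formula (\ref{Bogo2}) gives $\psi_+(d^ky)=(\Id-\pi)\psi(d^ky)$, i.e.\ the pole-free part $\sum_{m\ge 1}\frac{B_m}{m!}C^k_m z^{m-1}$ of the expansion in Lemma \ref{lem:qcharacter}. Up to the sign $(-1)^{k+1}$, the coefficient $C^k_m$ is the $k$-th finite difference at $0$ of the polynomial $l\mapsto(l+1)^{m-1}$, which has degree $m-1$; hence $C^k_m=0$ for $1\le m\le k$ while $C^k_{k+1}=(-1)^{k+1}k!$. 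Therefore $\psi_+(d^ky)(z)=O(z^k)$, so the limit exists, and $\fz_+(-k)=(-1)^k\frac{B_{k+1}}{(k+1)!}C^k_{k+1}=-\frac{B_{k+1}}{k+1}=\z_1(-k)=\zeta_+(-k)$, the last step by Theorem \ref{theo:mero}.

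For the inductive step I would invoke Corollary \ref{cor:decomposition}, which holds for every $\lambda$. Applying the $\shm$-character $\psi_+$ to $(2^{\dpt(w)}-2)[w]=\sum_{(w)}[w]'\shm[w]''$ in $\cH_{-1}$ gives, for $\dpt(w)>1$,
\begin{align*}
 (2^{\dpt(w)}-2)\,\psi_+(w)=\sum_{(w)}\psi_+(w')\,\psi_+(w''),
\end{align*}
the sum running over $\tilde\Delta_{-1}(w)$. Using the combinatorial description (\ref{lambdaCoproduct}) I would split this sum: its $\lambda^0$ part is exactly $\tilde\Delta_0(w)$, while each correction term $w_S\otimes w_{[n]\backslash(S\backslash J)}$ carries $|J|\ge 1$ \emph{additional} letters $d$. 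Writing $K(u)$ for the number of $d$'s in $u$, the classical terms obey $K(w')+K(w'')=K(w)$ whereas the correction terms obey $K(w')+K(w'')=K(w)+|J|>K(w)$. By the inductive hypothesis $\psi_+(u)(z)=O(z^{K(u)})$ in lower depth, every summand is $O(z^{K(w')+K(w'')})$, which simultaneously yields $\psi_+(w)=O(z^{K(w)})$ (well-definedness) and shows that, after multiplying by $(-1)^{K(w)}z^{-K(w)}$, the correction terms are $O(z^{|J|})$ and hence vanish as $z\to0$. The surviving classical terms factor as $\frac{(-1)^{K(w')}}{z^{K(w')}}\psi_+(w')\cdot\frac{(-1)^{K(w'')}}{z^{K(w'')}}\psi_+(w'')$, so
\begin{align*}
 \fz_+(w)=\frac{1}{2^{\dpt(w)}-2}\sum_{(w)_0}\fz_+(w')\,\fz_+(w''),
\end{align*}
the sum now over $\tilde\Delta_0(w)$. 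This is precisely the recursion satisfied by $\zeta_+$ coming from Corollary \ref{cor:primitive-renormalization}, so induction on depth delivers $\fz_+=\zeta_+$, and multiplicativity for $\sho$ follows.

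The main obstacle is exactly the clash of Hopf structures: $\psi_+$ is multiplicative for the $q$-shuffle $\shm$ and coassociative for $\Delta_{-1}$, not for $\sho,\Delta_0$. The resolution is the degree-in-$z$ bookkeeping above, where the modification-reversing prefactor $(-1)^{K}z^{-K}$ annihilates precisely the $\lambda=-1$ correction terms — the very terms that distinguish $\Delta_{-1}$ from $\Delta_0$ and $\shm$ from $\sho$ — so that the $q$-recursion collapses onto the classical one in the limit. The two points needing genuine care are that $\tilde\Delta_{-1}$ respects the depth grading (so $w',w''$ have strictly smaller depth and the induction is well-founded) and that the correction terms really gain $d$'s, which rests on the constraints $J\neq\emptyset$ and $J\cap N^k_w=\emptyset$ in (\ref{lambdaCoproduct}); both are immediate from the explicit coproduct formula.
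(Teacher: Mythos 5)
Your proof is correct and takes essentially the same route as the paper: both arguments rest on the all-$\lambda$ shuffle factorization (Theorem \ref{theo:trivialrenorm} / Corollary \ref{cor:decomposition}), the splitting of $\Delta_{-1}$ into $\Delta_0$ plus correction terms carrying extra letters $d$, the degree-in-$z$ bookkeeping that makes the rescaling $(-1)^{K(w)}z^{-K(w)}$ annihilate exactly those corrections, and the depth-one leading-term computation (your finite-difference evaluation of $C^k_m$ is precisely the paper's Lemma \ref{lem:power}). The only difference is organizational: the paper iterates the factorization all the way down to a product of $n$ primitive factors and compares weights there, whereas you apply the factorization once and induct on depth, which is a slightly more explicit packaging of the same argument.
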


For the proof of this theorem we need an auxiliary result: 

\begin{lemma}\label{lem:power}
 We have $\psi_{+}(d^ky)(z) = (-1)^{k}\zeta_+(-k)z^k + O(z^{k+1})$ for all $k\in \bN_0$. 
\end{lemma}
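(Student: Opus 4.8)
The plan is to exploit that $d^ky$ is a primitive element of the Hopf algebra $(\cH,\shm,\Delta_{-1})$. From Example \ref{ex:coproduct} (with $\lambda=-1$) we have $\Delta_{-1}(d^ky)=\be\otimes d^ky+d^ky\otimes\be$, so the reduced coproduct $\tilde\Delta_{-1}(d^ky)$ vanishes. Writing out the convolution in Bogoliubov's formula \eqref{Bogo2} and using that $\psi$ is a character (Lemma \ref{lem:qcharacter}), we get $(\psi_-\star(\psi-e))(d^ky)=\psi_-(\be)\,\psi(d^ky)+\psi_-(d^ky)\,(\psi-e)(\be)=\psi(d^ky)$, whence
\begin{align*}
  \psi_+(d^ky)(z)=(\Id-\pi)\,\psi(d^ky)(z).
\end{align*}
It therefore suffices to analyse the non-negative part of the Laurent expansion of $\psi(d^ky)(z)$ and to show that its lowest-order term is $(-1)^k\zeta_+(-k)z^k$.

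Next I would specialise the defining series of $\psi$ to depth one, namely
\begin{align*}
  \psi(d^ky)(z)=\sum_{m\geq 0}\frac{B_m}{m!}\,C^k_m\,z^{m-1},\qquad C^k_m=\sum_{l=0}^k(-1)^{l+1}\binom{k}{l}(l+1)^{m-1}.
\end{align*}
The key step is to recognise $C^k_m$ as a finite difference. Writing $f_m(x):=(x+1)^{m-1}$ and $(\Delta g)(x):=g(x+1)-g(x)$ for the forward difference, the standard identity $\sum_{l=0}^k(-1)^l\binom{k}{l}f_m(l)=(-1)^k(\Delta^kf_m)(0)$ gives $C^k_m=(-1)^{k+1}(\Delta^k f_m)(0)$. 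For $1\leq m\leq k$ the function $f_m$ is a polynomial in $x$ of degree $m-1<k$, so $\Delta^k f_m\equiv 0$ and hence $C^k_m=0$. Consequently every coefficient of $z^{m-1}$ with $0\leq m-1\leq k-1$ vanishes, while the exceptional index $m=0$ (for which $f_0$ is not a polynomial) contributes only the simple pole $C^k_0z^{-1}$, removed by $\Id-\pi$.

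It remains to compute the first surviving coefficient, which comes from $m=k+1$. Here $f_{k+1}(x)=(x+1)^k$ has degree $k$ with leading coefficient $1$, so $(\Delta^k f_{k+1})(0)=k!$ and $C^k_{k+1}=(-1)^{k+1}k!$. Therefore the coefficient of $z^k$ equals
\begin{align*}
  \frac{B_{k+1}}{(k+1)!}\,C^k_{k+1}=(-1)^{k+1}\frac{B_{k+1}}{k+1}=(-1)^k\Big(-\frac{B_{k+1}}{k+1}\Big)=(-1)^k\zeta_+(-k),
\end{align*}
where the last equality uses $\zeta_+(-k)=\z_1(-k)=-\frac{B_{k+1}}{k+1}$ from Remark \ref{rem:mero} and Theorem \ref{theo:mero}. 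Combining the three observations yields $\psi_+(d^ky)(z)=(-1)^k\zeta_+(-k)z^k+O(z^{k+1})$. The main technical point is the finite-difference vanishing argument, which forces all intermediate coefficients to disappear; everything else is bookkeeping with the Bernoulli generating series. I would finally verify the base case $k=0$ directly, where the vanishing range $1\le m\le k$ is empty and the constant term reads off immediately as $\zeta_+(0)=-\tfrac12$, confirming that the argument also covers the edge of the range $k\in\bN_0$.
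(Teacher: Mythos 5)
Your proposal is correct and takes essentially the same approach as the paper: primitivity of $d^ky$ reduces the claim to analysing $(\Id-\pi)\psi(d^ky)$, and the lemma follows once one shows $C^k_m=0$ for $1\le m\le k$ and $C^k_{k+1}=(-1)^{k+1}k!$, with the constant $\zeta_+(-k)=-\frac{B_{k+1}}{k+1}$ supplied by Theorem \ref{theo:mero}. The only (cosmetic) difference is how the vanishing is established: you invoke the finite-difference identity $\sum_{l=0}^k(-1)^l\binom{k}{l}f(l)=(-1)^k(\Delta^k f)(0)$ together with the fact that $\Delta^k$ annihilates polynomials of degree $<k$, whereas the paper re-indexes the sum and recognises it as $\left.\frac{1}{k+1}\delta_z^m(1-z)^{k+1}\right|_{z=1}$ --- two standard guises of the same elementary fact.
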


\begin{proof}
 Since $d^ky$ is primitive with respect to the coproduct $\Delta_{-1}$, we obtain from Lemma \ref{lem:qcharacter} that
 \begin{align*}
  \psi_{+}(d^ky)(z)	=(\Id-\pi)\psi(d^ky)(z) 
  			= (\Id-\pi)\left(\sum_{m\geq 0} \frac{B_m}{m!}\cdot C_m^k z^{m-1} \right)
			= \sum_{m>0} \frac{B_m}{m!}\cdot C_m^k z^{m-1}
 \end{align*}
 with $C_m^k = \sum_{l=0}^k\binom{k}{l}(-1)^{l+1}(l+1)^{m-1}$. We have 
 \begin{align*}
  C_m^k 	& =  \sum_{l=0}^k\binom{k}{l}(-1)^{l+1}(l+1)^{m-1}  
  		    = \frac{1}{k+1} \sum_{l=0}^{k+1}\binom{k+1}{l}(-1)^l l^m \\
  		& = \left.\frac{1}{k+1} \delta_z^m\left( \sum_{l=0}^{k+1}\binom{k+1}{l} (-z)^l  \right)\right|_{z=1}
   		   = \left.\frac{1}{k+1} \delta_z^m (1-z)^{k+1} \right|_{z=1} 
 \end{align*}
This shows that $C_m^k=0$ for $m=1,\ldots,k$. Furthermore, we observe that $C_{k+1}^k = (-1)^{k+1}\frac{(k+1)!}{k+1}$, which completes the proof.  
\end{proof}

\begin{proof}[Proof of Theorem \ref{theo:renoqMZV}]
 Let $w:=d^{k_1}y\cdots d^{k_n}y$ with $k_1,\ldots,k_n\in \bN_0$. In order to prove that $\fz_{+}$ is well defined we show $\psi_+(w)\in O(z^{\wt(w)-n})$. We split up the coproduct $\Delta_{-1}(w)$ into two parts
\begin{align}
\label{eq:decom}
 \Delta_{-1}(w)=\Delta_0(w) + \left( \Delta_{-1}(w)-\Delta_0(w) \right). 
\end{align}
From Corollary \ref{cor:primitive-renormalization} we deduce that $\Delta_{-1}(w)$ induces a $\bQ$-linear combination of products with $n$ factors of $\psi_+$ in primitive elements of $\cH$. The part of $\psi_+$ corresponding to $\Delta_0(w)$ in the coproduct factorization is homogeneous in weight $\wt(w)$ and the one related to $\Delta_{-1}(w)-\Delta_0(w)$ has weight greater than $\wt(w)$. Therefore Lemma \ref{lem:power} implies that $\psi_+(w)\in O(z^{\wt(w)-n})$. Hence, the limit in \eqref{eq:limmod} exists. On the one hand we can apply Corollary \ref{cor:primitive-renormalization} to $\phi_+(w)$, defined in the previous section, which corresponds to the factorization induced by $\Delta_0(w)$. On the other hand we can do the same with $\psi_+(w)$. However, this factorization is related to $\Delta_{-1}(w)$.  After dividing by $z^{\wt(w)-n}$ and taking the limit $z\to 0$ only the first part in the decomposition \eqref{eq:decom} of $\Delta_{-1}(w)$ makes a contribution in the factorization of $\psi_+(w)$. Using the fact that the leading factor of $\psi_+(d^ky)(z)$ equals $(-1)^k\zeta_+(-k)z^k$ concludes the proof. 
\end{proof}

\begin{example}\label{ex:renoq}
Let us calculate the renormalized $q$MZV $\fz_{+}(-1,-1)$. From Example \ref{ex:coproduct} we obtain
\begin{align*}
 \tilde{\Delta}_0(dydy) = y\otimes d^2y + d^2y\otimes y +2 dy\otimes dy - dy\otimes d^2y - d^2y\otimes dy, 
\end{align*}
which gives 
\begin{align*}
 \psi_+(dydy) =(\Id-\pi)& \left[ \psi(dydy) - \left(\psi_{-}(y)\psi(d^2y) + \psi_{-}(d^2y) \psi(y) + 2 \psi_{-}(dy)\psi(dy) \right.\right.\\
 				  & \left.\left.- \psi_{-}(dy)\psi(d^2y) - \psi_{-}(d^2y)\psi(dy)\right)\right]
\end{align*}
Using
\allowdisplaybreaks{
\begin{align*}
 \psi(y)(z) &= -{z}^{-1}-\frac{1}{2}-\frac{1}{12}\,z+{\frac {1}{720}}\,{z}^{3}-{\frac {1}{30240}}\,{z}^{5}+O\!\left( {z}^{7} \right),\\
 \psi(dy)(z) &=  -\frac{1}{2}\,{z}^{-1}+\frac{1}{12}\,z-{\frac {7}{720}}\,{z}^{3}+{\frac {31}{30240}}\,{z}^{5}+O\!\left( {z}^{7} \right),\\
 \psi(d^2y)(z) & =  -\frac{1}{3} \,{z}^{-1}+{\frac {1}{60}}\,{z}^{3}-{\frac {1}{168}}\,{z}^{5}+O\!\left( {z}^{7} \right) 
\end{align*}}
and 
\allowdisplaybreaks{
\begin{align*}
 \psi(dydy)(z)={\frac {5}{12}}\,{z}^{-2}+\frac{1}{6}\,{z}^{-1}-\frac{1}{36}+{\frac {1}{216}}\,{z}^{2}-{\frac {1}{120}}\,{z}^{3}-{\frac {19}{9072}}\,{z}^{4}+O\!\left( {z}^{5} \right),
\end{align*}}
we observe that
\begin{align*}
 (\Id-\pi) \left[\psi_{-}(y)\psi(d^2y) + \psi_{-}(d^2y) \psi(y) + 2 \psi_{-}(dy)\psi(dy) \right](z) = -\frac{1}{18}-\frac{1}{135}z^2+O(z^3)
\end{align*}
and 
\begin{align*}
 (\Id-\pi) \left[-\psi_{-}(dy)\psi(d^2y) - \psi_{-}(d^2y)\psi(dy)\right](z) = \frac{1}{36} +\frac{11}{2160}z^2 + O(z^3). 
\end{align*}
Therefore we have $\psi_{+}(dydy)(z) = \frac{1}{144} z^2 + O(z^3)$ and consequently $\fz_{+}(-1,-1)=\frac{1}{144}$, which coincides with $\zeta_+(-1,-1)$. 
\end{example}

\begin{remark}
{\rm{The crucial point in the renormalization of $q$MZVs is the fact that $\psi_+(w)(z)\in O(z^{\wt(w)-n})$. We used a corollary of Theorem \ref{theo:trivialrenorm} to prove this. However, in the light of Theorem \ref{theo:ConKre} the previous example shows that this is obtained by non-trivial cancellations.}}
\end{remark}
 
\bibliographystyle{alpha}
\bibliography{library}
\end{document}